\documentclass[letter]{amsart}
\usepackage{graphicx}

\usepackage{amsfonts}
\usepackage{amscd}
\usepackage{amssymb}
\usepackage{stmaryrd}
\usepackage{mathtools}
\usepackage{xypic}
\xyoption{all}
\usepackage{verbatim}
\usepackage{hyperref}
\usepackage{url}
\usepackage{tikz}
\usepackage{tikz-cd}
\usepackage{calligra}
\usetikzlibrary{shapes}
\usetikzlibrary{decorations.markings}
\usepackage{multirow}
\usepackage[savepos]{zref}
\newtheorem{thm}{Theorem}[section]
\newtheorem{corollary}[thm]{Corollary}
\newtheorem{lemma}[thm]{Lemma}
\newtheorem{proposition}[thm]{Proposition}

\newtheorem{example}[thm]{Example}
\newtheorem{conj}[thm]{Conjecture}

\theoremstyle{definition}
\newtheorem{definition}[thm]{Definition}
\theoremstyle{remark}
\newtheorem{remark}[thm]{Remark}
\numberwithin{thm}{section}
\numberwithin{equation}{section}
\usepackage[mathscr]{euscript}
\usepackage[T2A,T1]{fontenc}

  {\begin{description}%
    \setlength{\itemsep}{2.5pt}%
    \setlength{\parskip}{5pt}}%
  {\end{description}}

\DeclareMathOperator{\Supp}{Supp}

\DeclareMathOperator{\Star}{{St}}

\DeclareMathOperator{\fin}{fin}

\DeclareMathOperator{\ind}{ind}
\DeclareMathOperator{\cInd}{cInd}
\DeclareMathOperator{\Hom}{Hom}

\DeclareMathOperator{\Rep}{Rep}

\DeclareMathOperator{\Span}{Span}

\DeclareMathOperator{\Ker}{Ker}
\DeclareMathOperator{\Cok}{Cok}

\DeclareMathOperator{\Char}{char}

\DeclareMathOperator{\Id}{Id}
\DeclareMathOperator{\Res}{Res}

\newcommand{\From}{\colon}
\newcommand{\inar}{\ar@{^{(}->}}
\newcommand{\onar}{\ar@{->>}}

\renewcommand{\Im}{\mathrm{Im}}

\usepackage{accents}
\newlength{\dtildeheight}

\newcommand{\limdir}{\varinjlim}

\makeatletter
\newcommand{\raisemath}[1]{\mathpalette{\raisem@th{#1}}}
\newcommand{\raisem@th}[3]{\raisebox{#1}{$#2#3$}}
\makeatother

\newcommand{\Cat}[1]{ {\mathsf{#1}} }

\newcommand{\alg}[1]{\boldsymbol{\mathrm{#1}}}

\newcommand{\sheaf}[1]{{\mathscr{#1}}}

\newcommand{\NN}{\mathbb N}
\newcommand{\CC}{\mathbb C}

\newcommand{\ident}{\equiv}

\newcommand{\Into}{\hookrightarrow}
\newcommand{\Onto}{\twoheadrightarrow}
\newcommand{\To}{\rightarrow}

\newcommand{\isom}{\cong}

\DeclareMathOperator{\loc}{loc}

\makeatletter
\newcommand\@biprod[1]{%
  \vcenter{\hbox{\ooalign{$#1\prod$\cr$#1\coprod$\cr}}}}
\newcommand\biprod{\mathop{\mathpalette\@biprod\relax}\displaylimits}
\makeatother

\DeclareMathAlphabet{\mathcalligra}{T1}{calligra}{m}{n}

\makeatletter
\DeclareFontFamily{U}{MnSymbolA}{}
\DeclareFontShape{U}{MnSymbolA}{m}{n}{
    <-6>  MnSymbolA5
   <6-7>  MnSymbolA6
   <7-8>  MnSymbolA7
   <8-9>  MnSymbolA8
   <9-10> MnSymbolA9
  <10-12> MnSymbolA10
  <12->   MnSymbolA12}{}
\DeclareFontShape{U}{MnSymbolA}{b}{n}{
    <-6>  MnSymbolA-Bold5
   <6-7>  MnSymbolA-Bold6
   <7-8>  MnSymbolA-Bold7
   <8-9>  MnSymbolA-Bold8
   <9-10> MnSymbolA-Bold9
  <10-12> MnSymbolA-Bold10
  <12->   MnSymbolA-Bold12}{}
\DeclareSymbolFont{MnSyA}{U}{MnSymbolA}{m}{n}
\SetSymbolFont{MnSyA}{bold}{U}{MnSymbolA}{b}{n}

\DeclareRobustCommand{\overleftharpoon}{\mathpalette{\overarrow@\leftharpoonfill@}}
\DeclareRobustCommand{\overrightharpoon}{\mathpalette{\overarrow@\rightharpoonfill@}}
\def\leftharpoonfill@{\arrowfill@\leftharpoondown\mn@relbar\mn@relbar}
\def\rightharpoonfill@{\arrowfill@\mn@relbar\mn@relbar\rightharpoonup}

\DeclareMathSymbol{\leftharpoondown}{\mathrel}{MnSyA}{'112}
\DeclareMathSymbol{\rightharpoonup}{\mathrel}{MnSyA}{'100}
\DeclareMathSymbol{\mn@relbar}{\mathrel}{MnSyA}{'320}
\makeatother
\newcommand{\edge}[1]{\overrightharpoon{#1}}

\renewcommand{\NN}{{\mathcal N}}

\newcommand{\constsheaf}[1]{{\underline{#1}}}

\newcommand{\ltr}{\triangleleft}
\DeclareMathAlphabet{\mathcalligra}{T1}{calligra}{m}{n}

\DeclareMathOperator{\uni}{uni}
\DeclareMathOperator{\dist}{dist}

\DeclareMathOperator{\el}{ell}
\DeclareMathOperator{\pr}{pr}
\DeclareMathOperator{\minc}{mc}
\DeclareMathOperator{\cpt}{cpt}
\DeclareMathOperator{\res}{res}
\DeclareMathOperator{\GL}{GL}

\DeclareMathOperator{\coll}{coll}
\begin{document}

\title{Types and collapse for cuspidal representations of groups acting on trees}%
\author{Samuel Johnson and Martin H. Weissman}%
\date{\today}

\address{Dept. of Mathematics, University of California, Santa Cruz, CA 95064}
\email{weissman@ucsc.edu}%

% \thanks{}
% \subjclass{}%
% \keywords{}%
%\date{}%
%\dedicatory{}%
%\commby{}%
% ----------------------------------------------------------------
\begin{abstract}
In a previous paper, the second author proved that every supercuspidal representation of a rank-one $p$-adic group is induced from a compact-mod-center open subgroup.  The method was geometric, localizing representations to obtain equivariant sheaves on trees.  Here we provide two refinements.  The first is a geometric description of the inducing data, via a {\em geometrically minimal $K$-type}.  Second is a proof that the equivariant sheaves {\em collapse} onto injective sheaves.  The two notions of geometrically minimal $K$-types and collapsibility generalize to higher rank groups, suggesting a pair of conjectures.
\end{abstract}

\maketitle

\tableofcontents

\section{Introduction}

The {\em compact induction conjecture} is a folklore conjecture in the representation theory of $p$-adic groups.  A precise form is the following:  
\begin{conj}
\label{cind}
Let $F$ be a nonarchimedean local field, and let $\alg{G}$ be a connected reductive group defined over $F$.  Let $G = \alg{G}(F)$ be the resulting group of $F$-points, and let $Z$ be the center of $G$.  Let $(\pi, V)$ be an irreducible supercuspidal representation of $G$.  Then there exists a compact subgroup $K \subset G$, and a $K$-stable subspace $W \subset V$, such that
$$(\pi, V) = \cInd_{KZ}^G W \text{ as representations of } G.$$ 
(Note that $W$ is viewed as a representation of $KZ$ by having $Z$ act via the central character of $(\pi, V)$.)
\end{conj}

This form of the conjecture may differ from the traditional quick statement that $(\pi, V)$ is isomorphic to a represenation of the form $\cInd_{KZ}^G W$ (for some representation $W$ of $KZ$).  But it is equivalent:  if $(\pi, V)$ is isomorphic to $\cInd_{KZ}^G W$, then such an isomorphism identifies $W$ with a $K$-stable subspace of $V$.  And conversely, if $W$ is a $K$-stable subspace of $V$, then $W$ is a $KZ$-stable subspace of $V$, since $(\pi, V)$ has a central character.  Frobenius reciprocity gives a canonical $G$-intertwining map from $\cInd_{KZ}^G W$ to $(\pi, V)$, and our conjecture asks that this map is an isomorphism.  

We prefer the form of Conjecture \ref{cind} because it highlights the importance of finding a {\em $K$-stable subspace} within an irreducible supercuspidal representation of $G$.  A $K$-stable subspace $W \subset V$, whose compact induction $\cInd_{KZ}^G W$ coincides with $V$, is called a {\em minimal $K$-type} for the supercuspidal representation $(\pi, V)$.

Conjecture \ref{cind} has been proven in many settings, and we mention some highlights here.  When $\alg{G} = \alg{GL}_n$, the conjecture is proven in the monumental work of Bushnell-Kutzko \cite{BK}.  For classical groups (unitary, symplectic, or special orthogonal) in odd residual characteric, a proof is found is found in work of Stevens \cite{Stevens}.  These examples highlight one approach, which explicitly connects the arithmetic of lattices and orders to K-types.  

Another approach centers on the construction of supercuspidal representations by Yu \cite{JKY} (inspired by Adler \cite{Adler}) for a broad class of {\em tame} groups $\alg{G}$.  Breakthroughs of Kim \cite{JLK} and most recently Fintzen \cite{JF} now tell us that all supercuspidal irreducible representations arise from Yu's construction, under two natural conditions:  that the group $\alg{G}$ splits over a tamely-ramified extension of $F$, and that the residual characteristic $p$ does not divide the order of the (absolute) Weyl group of $\alg{G}$.  

Both of the above approaches rely of tameness in some sense.  The work of Stevens et al.~ on classical groups avoids residual characteristic two; the work of Fintzen et al.~ restricts the group, and the residual characteristic.  The wildest supercuspidal representations are at a far more exploratory stage.  Prototypes include various simple supercuspidal representations \cite{GR} and epipelagic representations \cite{RY}, but we seem far from a construction that would be conjecturally exhaustive.

A third approach is due to the second author in \cite{MWInd}, which proves the compact induction conjecture for groups of relative rank one.  This approach does not care about the residue characteristic (or characteristic) of the field $F$; it relies primarily on the geometry of the Bruhat-Tits building of $G$ and an analysis of equivariant sheaves introduced by Schneider and Stuhler \cite{SS}.  The proofs of \cite{MWInd} are limited to groups in which the building of $G$ is a tree -- a restrictive condition to be sure!  On the other hand, previous proofs of the compact induction conjecture required intricate analysis, even for $GL_2$ and $SL_2$.  

\subsection{Two definitions and two conjectures}

A (self-)critique of this geometric method of \cite{MWInd} is that the inducing data is unclear, and there is no indicated generalization to higher rank groups.  This article is meant to address these critiques by introducing very general notions of ``geometrically minimal $K$-type,'' and ``collapsible sheaves''.  We briefly introduce these notions here.

Let $(\pi, V)$ be an irreducible supercuspidal representation of $G$, and let $\sheaf{S}$ be the $G$-equivariant sheaf on the building $X$ associated to $(\pi, V)$ by Schneider and Stuhler in \cite{SS} (going sufficiently deep in their filtration).  Here, in this introduction and in \cite{SS}, we work with representations on complex vector spaces, and sheaves of complex vector spaces.  Schneider and Stuhler then identify $(\pi, V)$ with $H_c^0(X, \sheaf{S})$, as representations of $G$.  Let $B$ be a minimal ball in $X$ supporting a section of $\sheaf{S}$.  Let $\sigma$ be the center of $B$ (a facet of $X$).  Note that $H_B^0(X, \sheaf{S})$, the space of sections supported on $B$, is naturally a representation of $K = G_\sigma$.  We call this $H_B^0(X, \sheaf{S})$ a {\em geometrically minimal $K$-type} for $(\pi, V)$.
\begin{conj}[Geometrically minimal K-types]
\label{Ktype-conj}
The geometrically minimal $K$-type $H_B^0(X, \sheaf{S})$ is irreducible, and $(\pi, V) = \cInd_K^G( H_B^0(X, \sheaf{S}) )$.  The geometrically minimal $K$-type is also unique up to conjugation; if $B'$ is another minimal ball supporting a section of $\sheaf{S}$, then there exists $g \in G$ such that $g B = B'$.
\end{conj}

In a later section, we define a notion of (equivariant) collapse for sheaves on a tree; this generalizes naturally to the building as follows.  Given a pair of facets $\sigma, \tau$ such that $\sigma < \tau$ and $\dim(\tau) = \dim(\sigma) + 1$, a {\em basic elementary sheaf} on $X$ supported on $\{ \sigma, \tau \}$ is a sheaf with 1-dimensional stalks at $\sigma$ and $\tau$, whose restriction map $\Res_{\sigma, \tau}$ is an isomorphism.  An {\em elementary sheaf} on $X$ is a direct sum of basic elementary sheaves.  We say a sheaf $\sheaf{S}$ (equivariantly) {\em collapses} onto a sheaf $\sheaf{S}'$, if there is an (equivariant) elementary subsheaf $\sheaf{J} \subset \sheaf{S}$ such that $\sheaf{S}'$ is isomorphic to $\sheaf{S} / \sheaf{J}$.  
\begin{conj}[Collapse onto injectives]
\label{coll-conj}
The Schneider-Stuhler sheaf $\sheaf{S}$ can be equivariantly collapsed (through a finite sequence of equivariant collapses) onto an indecomposable equivariant {\em injective} sheaf.
\end{conj}
Under some finiteness conditions, e.g., via the admissibility of irreducible supercuspidals, injective sheaves are direct sums of constant sheaves supported on closures of facets.  Thus the conjecture above states that $\sheaf{S}$ can be collapsed onto an easily described sheaf -- one that arises from a constant sheaf on a closed facet by taking the direct sum of $G$-translates.  The two Conjectures \ref{Ktype-conj} and \ref{coll-conj} are closely linked; the latter says the Schneider-Stuhler sheaf collapses onto an ``induced'' sheaf; the conjecture about geometrically minimal $K$-types should follow by tracking sections through the collapse process.  

\medskip
In this article, we prove these conjectures for groups of relative rank one, i.e., groups where $X$ is a tree.  Along the way, we try to broaden the scope of the compact induction conjecture.  For this, we reprove key results of Schneider and Stuhler in the rank one case, where the geometry of trees makes everything easier.  Some may wish to study irreducible supercuspidal representations of $G$ with coefficient fields besides $\CC$.  Others may wish to study ``covering groups'' like metaplectic groups.  Others may wish to study disconnected groups.  We hope that our results accommodate such a breadth of groups, residue characteristics, characteristics, coefficient fields, central extensions, and disconnectedness... even as we stay within rank one.

\section{Conventions}

\subsection{Groups acting on trees}
\label{conventions}
Let $X$ be a tree, with vertex set $X^0$ and edge set $X^1$.  If $x \in X^0$ and $e \in X^1$, then we write $x < e$ to mean that $x$ is an endpoint of $e$.  If $x,y \in X^0$, then we write $[x,y]$ for the unique simple path joining $x$ and $y$.  The distance $\dist(x,y)$ is defined to be the number of edges in $[x,y]$.  

It will be convenient to choose an orientation on the tree $X$, by which we mean that each edge is given a direction.  We write $e = \edge{xy}$ to mean that $e$ is an edge directed from $x$ to $y$.  We represent this numerically with the {\em incidence numbers} $[e:x] = 1$ and $[e:y] = -1$.  

Let $G$ be a group acting on $X$.  This means that $G$ acts on the sets $X^0$ and $X^1$, and for all $g \in G$, $x \in X^0$, $e \in X^1$, 
$$x < e \text{ implies } g(x) < g(e).$$
It follows that the distance is $G$-invariant, i.e., $\dist(x,y) = \dist( g(x), g(y) )$ for all $x,y \in X^0$ and all $g \in G$.  For every vertex $x$ and edge $e$, we write $G_x$ and $G_e$ for their respective stabilizers.  

Throughout this article, we make the following \textbf{assumptions}.
\begin{description}
\item[Locally Finite]
$X$ is locally finite.  This means that for all $x \in X^0$, there are finitely many edges $e \in X^1$ for which $x < e$.
%\item[Pointwise Edge-stabilizing]
%If $x < e$, then $G_e \subset G_x$.  In other words, the stabilizer of an edge fixes the edge and its endpoints pointwise.  This is not a restrictive assumption, in practice, because we may always subdivide edges to make this assumption valid.  
\item[Edge transitive]
The set $X^1$ of edges is nonempty and $G$ acts transitively on $X^1$.  It follows that $G$ acts on $X^0$ with at most two orbits.  %The previous assumption then implies that $G$ acts on $X^0$ with precisely two orbits.
\end{description}

\subsection{Representations}

We fix a field $k$ in all that follows, and let $\Cat{Vec}$ be the category of $k$-vector spaces.  
\begin{definition}
A {\em representation} of $G$ is a pair $(\pi, V)$, where $V$ is a $k$-vector space, and $\pi \From G \To \GL(V)$ is a group homomorphism.
\end{definition}

Note that we do not assume that $V$ is finite-dimensional.  Let $\Cat{Rep}(G)$ be the resulting category of representations of $G$ and $G$-intertwining maps.  %Let $\Cat{For} \From \Cat{Rep}(G) \To \Cat{Vec}$ be the forgetful functor.

Let $H$ be a subgroup of $G$, and let $(\pi, V)$ be a representation of $G$.  We write $V^H$ for the $H$-invariant subspace of $V$.  Define
$$V[H] =   \Span_k \{ \pi(h) v - v : h \in H, v \in V \}.$$
Note that if $n \in N_G(H)$, then $\pi(n) V^H = V^H$ and $\pi(n) V[H] = V[H]$.  

We write $V_H$ for the $H$-coinvariant quotient:  $V_H = V / V[H]$.  The canonical maps $V^H \Into V \Onto V_H$ give a canonical linear map $c_H \From V^H \To V_H$.  
\begin{definition}
We say that $V$ is {\em $H$-characteristic} if the linear map $c_H$ is an isomorphism.  Equivalently, $V$ is {\em $H$-characteristic} if 
$$V = V^H \oplus V[H].$$
In this case, we write $\pr_H \From V \To V^H$ to be the composition of $V \Onto V_H$ with $c_H^{-1} \From V_H \To V^H$.  
\end{definition}
Note that $\pr_H^2 = \pr_H$; we say that $\pr_H$ is the projection onto $H$-fixed vectors.  When $V$ is $H$-characteristic, the decomposition $V = V^H \oplus V[H]$ is a decomposition of representations of $N_G(H)$, and $\pr_H$ is an $N_G(H)$-intertwining operator.
%\begin{lemma}
%\label{norm-proj}
%Let $(\pi, V)$ be a representation of $G$, and let $H$ be a subgroup of $G$.  Suppose that $V$ is $H$-characteristic, and $n \in N_G(H)$.  Then for all $v \in V$,
%$$\pr_H ( \pi(n) v ) = \pi(n) \pr_H(v).$$
%\end{lemma}
%\begin{proof}
%If $v \in V$, then let $w = v - \pr_H(v) \in V[H]$.  Thus $\pi(n) w \in V[H]$, and 
%$$\pi(n) v =  \pi(n) \pr_H(v) + \pi(n) w \in V^H + V[H].$$  
%Since $V = V^H \oplus V[H]$, we find that $\pi(n) \pr_H(v) = \pr_H( \pi(n) v)$.
%\end{proof}

When $H$ is a finite group, and $\Char(k) \nmid \# H$, all representations are $H$-characteristic and this projection map is well known: $\pr_H = (\# H)^{-1} \sum_{h \in H} \pi(h)$.  The following is a more general characterization.  It was provided by Claude Fable 5 (Anthropic's large language model in June 2026), in a form very close to what we write below.  The prompt is given in the appendix.
\begin{proposition}
\label{charchar}
Let $H$ be a finite group, and let $(\pi, V)$ be a representation of $H$.  Then $V$ is $H$-characteristic if and only if $V \isom T \oplus W$, where $T = T^H$ is a trivial representation of $H$ and $W$ is a representation with $W^H = W_H = 0$.
\end{proposition}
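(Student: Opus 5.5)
We prove the two implications separately. Throughout, $V = V^H \oplus V[H]$ is the working form of ``$H$-characteristic''.

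\emph{($\Leftarrow$)} Suppose $V \isom T \oplus W$ with $T = T^H$ and $W^H = W_H = 0$. Taking invariants and the span of the vectors $\pi(h)v - v$ both commute with finite direct sums, so
$$V^H = T^H \oplus W^H = T \quad\text{and}\quad V[H] = T[H] \oplus W[H] = 0 \oplus W.$$
Here $T[H] = 0$ because $H$ acts trivially on $T$, and $W[H] = W$ because $W_H = 0$. Hence $V^H \oplus V[H] = T \oplus W = V$, and $c_H$ is an isomorphism.

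\emph{($\Rightarrow$)} Suppose $V = V^H \oplus V[H]$. Set $T = V^H$, which is trivial, and $W = V[H]$. Both are $H$-subrepresentations, so $V = T \oplus W$ is a decomposition of representations of $H$. It remains to check that $W^H = 0$ and $W_H = 0$.

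\textbf{Step 1: $W^H = 0$.} We have $W^H = V^H \cap V[H] = 0$, since the sum is direct.

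\textbf{Step 2: $W_H = 0$, i.e.\ $W[H] = W$.} This is the main point. By the $(\Leftarrow)$ computation applied to $V = T \oplus W$, we get $V[H] = T[H] \oplus W[H] = W[H]$, because $T[H] = 0$. Since $W = V[H]$ by definition, this gives $W = W[H]$, so $W_H = 0$.

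This direction in fact never uses finiteness of $H$, and neither does the converse. I expect no real obstacle beyond confirming that the bracket construction $V \mapsto V[H]$ is additive on direct sums, which holds because $\pi(h)(t+w) - (t+w) = (\pi(h)t - t) + (\pi(h)w - w)$. The finiteness hypothesis is presumably there only for context, tying the statement to the averaging formula $\pr_H = (\# H)^{-1}\sum_{h\in H}\pi(h)$ when $\Char(k) \nmid \# H$.
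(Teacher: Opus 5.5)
Your proof is correct and follows the paper's argument essentially verbatim. The converse is the same direct-sum computation, and in the forward direction you take $T = V^H$ and $W = V[H]$ exactly as the paper does; your identity $V[H] = W[H]$ is the bracket-side form of the paper's observation that $V_H = (V^H)_H \oplus V[H]_H$ forces $V[H]_H = 0$, and your remark that finiteness of $H$ is never used applies equally to the paper's proof.
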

\begin{proof}
On one hand, if $V$ is $H$-characteristic, then $V = V^H \oplus V[H]$.  Note that this is a decomposition of representations of $H$, with $V^H$ a trivial representation of $H$.  As such, $V[H]^H \subset V^H \cap V[H] = 0$.  Moreover $V_H = (V^H)_H \oplus V[H]_H$; since $V$ is $H$-characteristic, we find $(V^H)_H = V_H$, and so $V[H]_H = 0$.  Hence we have the desired decomposition, with $T = V^H$ and $W = V[H]$.

Conversely, suppose that $V = T \oplus W$, where $T = T^H$ and $W^H = W_H = 0$.  Then we have $V^H = T^H \oplus W^H = T$.  Since $T[H] = 0$ and $W_H = 0$, we have $V[H] = T[H] \oplus W[H] = W$.  Since $V = T \oplus W$, we find $V = V^H \oplus V[H]$ as desired.
\end{proof}
Thus if we work in ``bad'' characteristic, e.g., $p \mid \#H$ and $\Char(k) = p$, then we must restrict to direct sums of trivial representations and representations that have no trivial representation in their socle or in their head.

\subsection{Equivariant sheaves on trees}

\begin{definition}
A sheaf on $X$ is the data of a $k$-vector space $\sheaf{S}_x$ for every vertex $x$, a $k$-vector space $\sheaf{S}_e$ for every edge $e$, and a $k$-linear map (``restriction'') $\Res_{x,e} \From \sheaf{S}_x \To \sheaf{S}_e$ for every pair $x < e$.  
\end{definition}

\begin{remark}
Such data forms a sheaf in the following sense.  The poset $(X, \leq)$ is endowed with the Alexandrov topology whose open sets are upward-closed.  A basis for the topology consists of singletons $\{ e \}$ for every edge, and stars $\Star(x) = \{ x \} \cup \{ e : e > x \}$ for every vertex $x$.  The reader may verify that a sheaf in the above definition coincides with a sheaf on $X$ in the Alexandrov topology.  Namely, $\sheaf{S}_x$ is a convenient shorthand for $\sheaf{S}(\Star(x))$.  
\end{remark}

Given such a sheaf $\sheaf{S}$, the compactly supported cochains are
$$C_c^0(X, \sheaf{S}) = \bigoplus_{x \in X^0} \sheaf{S}_x \text{ and } C_c^1(X, \sheaf{S}) = \bigoplus_{e \in X^1} \sheaf{S}_e.$$
Define the coboundary map $d \From C_c^0(X, \sheaf{S}) \To C_c^1(X, \sheaf{S})$ by the following:  if $x \in X^0$ and $f \in \sheaf{S}_x$,
$$(df)_e = [e:x] \cdot \Res_{x,e}(f) \text{ for all } e > x.$$
If $e \not > x$, then of course $(df)_e = 0$.  Note that our assumption of local finiteness is required here, for the definition of the coboundary map.

This defines the compactly-supported sheaf cohomology,
$$H_c^0(X, \sheaf{S}) = \Ker(d) \text{ and } H_c^1(X, \sheaf{S}) = \Cok(d).$$

A $G$-equivariant structure on a sheaf $\sheaf{S}$ is a family of isomorphisms $g^\ast \sheaf{S} \To \sheaf{S}$, for each $g \in G$, making an appropriate triangle commute for each composition in $G$.  A more concrete definition is below.
\begin{definition}
Let $\sheaf{S}$ be a sheaf on $X$.  A $G$-equivariant structure on $\sheaf{S}$ is a family of linear maps, $\rho(g,x) \From \sheaf{S}_x \To \sheaf{S}_{gx}$ and $\rho(g,e) \From \sheaf{S}_e \To \sheaf{S}_{ge}$, for all $g \in G$, $x \in X^0$, $e \in X^1$.  This family must be compatible with restriction and composition as described below.
\begin{itemize}
\item
$\Res_{gx, ge} \circ \rho_{g,x}  = \rho_{g,e} \circ \Res_{x,e}$ for all $x < e$ and all $g \in G$.
\item
$\rho_{g,hx} \circ \rho_{h,x} = \rho_{gh,x}$ and $\rho_{g,he} \circ \rho_{h,e} = \rho_{gh,e}$ for all $g,h \in G$, and all $x \in X^0$ and $e \in X^1$.
\end{itemize}
\end{definition}

Note that a $G$-equivariant structure on $\sheaf{S}$ yields an action of $G$ on $C_c^i(X, \sheaf{S})$ for $i \in \{ 0, 1 \}$.  This action is almost compatible with the coboundary operator $d$, in the following sense:  every element $g \in G$ is either orientation-preserving or orientation-reversing.  Thus if $f \in C_c^0(X, \sheaf{S})$, then $d(g f) = \pm g d(f)$.  Hence the kernel and cokernel of $d$ are also endowed with an action of $G$.  The cohomology $H_c^i(X, \sheaf{S})$ becomes a representation of $G$ for $i \in \{ 0, 1 \}$.  One may check that the cohomology (as a representation of $G$) does not depend on the choice of orientation.  

For example, observe that $H_c^0(X, \sheaf{S})$ can be identified with the following space of ``compactly supported global sections'':
$$\{ (f_x) \in \bigoplus_{x \in X^0} \sheaf{S}_x  \text{ such that } e = \edge{xy} \text{ implies } \Res_{x,e} f_x = \Res_{y,e} f_y \}.$$
This certainly does not reference the choice of orientation.  Moreover, if $f \in H_c^0(X, \sheaf{S})$, then we write $f_e$ unambiguously for the common value
$$f_e :=  \Res_{x,e} f_x = \Res_{y,e} f_y  \text{ at every edge } e = \edge{xy}.$$

Let $\Cat{Sh}(X)$ be the category whose objects are sheaves on $X$, and whose morphisms are systems of linear maps (for all $x$ and $e$) commuting with restrictions.  Let $\Cat{Sh}_G(X)$ be the category of such sheaves endowed with an equivariant structure.  Morphisms of equivariant sheaves are morphisms of sheaves which intertwine the equivariant structure.  Compactly-supported cohomology gives functors,
$$H_c^i(X, \bullet) \From \Cat{Sh}(X) \To \Cat{Vec} \text{ and } \Cat{Sh}_G(X) \To \Cat{Rep}(G).$$

\section{Localization}

When $G = \alg{G}(F)$ is a $p$-adic group (i.e., the $F$-points of a connected reductive group $\alg{G}$ over a local nonarchimedean field $F$), the landmark work of Schneider and Stuhler \cite[\S III.1]{SS} provides a functor from the category of smooth complex representations of $G$ to the category of $G$-equivariant sheaves on the building $X$ of $G$.  We call this functor {\em localization}; indeed, Schneider and Stuhler note that their ``constructions bear a certain resemblance to the Beilinson-Bernstein localization theory'' \cite[Intr., p.100]{SS}.  

The localization of Schneider and Stuhler begins with the construction of filtrations ``$U_F^{(e)}$'' of parahoric subgroups; around the same time, Moy and Prasad \cite{MP1} constructed their filtrations called ``${\mathcal P}_{x,r+}$''.  Moy and Prasad say that an irreducible representation of $G$ has {\em depth} $\leq r$ if it has vectors fixed by such a subgroup ${\mathcal P}_{x,r+}$.  Other filtrations may be more natural, and we would point towards the minimal congruent filtrations of Yu \cite{YuSmoothModels} as an example.  But over time, especially from the work of Meyer and Solleveld \cite{MS} and Savin and Bestvina \cite{BS},  it has become clear that the precise choice of filtration in Schneider-Stuhler is not as important as a few axiomatic properties.  

Here we introduce minimal axioms for filtrations, based largely on the axioms for consistent systems of idempotents in Meyer and Solleveld \cite[Definition 2.1]{MS}.  We call these systems of subgroups {\em plumbing systems} because they are used, in effect, to plumb the {\em depths} of representations of $G$.  But our setting is again abstract -- $G$ is a group acting on a tree $X$, following the conventions of Section \ref{conventions}.

\subsection{Plumbing systems}

\begin{definition}
Let $\NN = (N_x : x \in X^0)$ be a family of subgroups of $G$.  We say that $\NN$ is a plumbing system if the following axioms are satisfied.
\begin{description}
\item[Stabilizing]  If $x \in X^0$ then $N_x \subset G_x$.  If $e = \edge{xy}$, then $N_x \subset G_e$ and $N_y \subset G_e$.  
\item[Equivariance]  If $x \in X^0$ and $g \in G$, then $g N_x g^{-1} = N_{gx}$.  In particular, $N_x \ltr G_x$ for all vertices $x \in X^0$.
\item[Locality] If $e = \edge{xy}$ then $N_x N_y = N_y N_x$.  In this case, we define $N_e = N_x N_y$.  It follows that $N_e \ltr G_e$ for all edges $e \in X^1$.
\item[Convexity]  If $x,y,z \in X^0$ and $y \in [x,z]$ then $N_y \subset N_x N_z$.  
\end{description}
\end{definition}

There is a minor subtlety to consider, since the edge-stabilizer $G_e$ may differ from the {\em pointwise} stabilizer $G_e^\circ := G_x \cap G_y$.  But noting that $G_e^\circ = G_x \cap G_e = G_y \cap G_e$, we find that $N_x, N_y \subset G_e^\circ$ when $e = \vec{xy}$.  In particular, $N_x \subset G_y$ and $N_y \subset G_x$.  As stated above, $N_x$ and $N_y$ are normal subgroups of the full stabilizer $G_e$.

The following shows that convexity extends to edges.
\begin{lemma}
\label{convex-edges}
Let $\NN$ be a plumbing system.  If $x,z \in X^0$, and $e = \edge{pq}$ and $x, p, q, z$ lie along a simple path, then $N_e \subset N_x N_z$.  
\end{lemma}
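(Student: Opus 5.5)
Since $N_e = N_p N_q$ by definition, it suffices to show $N_p \subset N_x N_z$ and $N_q \subset N_x N_z$. Then, because $N_x N_z$ is a subgroup, we get $N_p N_q \subset (N_x N_z)(N_x N_z) = N_x N_z$. The ordering of $x, p, q, z$ along the simple path gives $p \in [x,z]$ and $q \in [x,z]$, so convexity yields $N_p \subset N_x N_z$ and $N_q \subset N_x N_z$ directly.

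The real content is therefore checking that $N_x N_z$ is a subgroup, i.e.\ that $N_x N_z = N_z N_x$. Locality only gives this when $x$ and $z$ are adjacent, so I would argue by induction on $\dist(x,z)$ that $N_x N_z = N_z N_x$ for all vertices. The cases of distance $0$ and $1$ are trivial or given by locality. For the inductive step, let $y$ be the neighbor of $z$ on $[x,z]$. By convexity $N_y \subset N_x N_z$, and by induction $N_x N_y = N_y N_x$ is a group. We want $N_z N_x \subset N_x N_z$, and taking inverses gives the reverse inclusion. I would try to write $N_z N_x \subset N_z N_x N_y$ and then move $N_z$ across using $N_y N_z = N_z N_y$, exploiting that $N_y$ lies in $N_x N_z$.

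I expect this commutation step to be the main obstacle, and it may not close purely formally. If it fails, I would instead read the statement as the set-theoretic inclusion $N_e \subset N_x N_z$, which avoids needing $N_x N_z$ to be a group. For that version, convexity applied to $q \in [p, z]$ gives $N_q \subset N_p N_z$. Hence
\[
N_e = N_p N_q \subset N_p N_p N_z = N_p N_z \subset N_x N_z N_z = N_x N_z,
\]
where the last inclusion uses $N_p \subset N_x N_z$ from convexity for $p \in [x,z]$. This route uses only that each $N_\bullet$ is a group and the left-to-right grouping of products, so it avoids the commutativity issue entirely. I would adopt it as the actual proof, handling degenerate cases such as $x = p$ or $q = z$ through the trivial containment $N_y \subset N_y N_z$.
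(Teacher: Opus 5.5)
Your adopted argument, $N_e = N_p N_q \subset N_p N_p N_z = N_p N_z \subset N_x N_z N_z = N_x N_z$ (convexity for $q \in [p,z]$, then for $p \in [x,z]$), is correct and is exactly the paper's element-wise proof. The exploratory first route, which needs $N_x N_z$ to be a subgroup, is unnecessary, and you were right to drop it.
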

\begin{proof}
If $n_e \in N_e$, then $n_e = n_p n_q$ for some $n_p \in N_p$ and $n_q \in N_q$.  
\begin{center}
\begin{tikzpicture}
\coordinate (X) at (0,0);
\coordinate (P) at (3,0);
\coordinate (Q) at (4,0);
\coordinate (Z) at (6,0);
\draw (0,0) -- (6,0);
\foreach \c/\l in {X/x,P/p,Q/q,Z/z}
{\filldraw[draw=black, fill=gray] (\c) circle (0.05) node[below] {$\l$};}
\draw (3.5,0) node[below] {$e$};
\end{tikzpicture}
\end{center}
Since $q \in [p,z]$, convexity implies $n_q = n_p' n_z$ for some $n_p' \in N_p$ and $n_z \in N_z$.  Thus we have $n_e = n_p n_p' n_z$.  Let $n_p'' = n_p n_p' \in N_p$.  Since $p \in [x,z]$, convexity now implies that $n_p'' = n_x n_z'$ for some $n_x \in N_x$ and $n_z' \in N_z$.  Thus we have 
$$n_e = n_p'' n_z = n_x n_z' n_z \in N_x N_z.$$
\end{proof}

The following three examples illustrate the scope of the article.

\begin{example}
\label{Ex1}
Let $G = \alg{G}(F)$ arise from a connected reductive group $\alg{G}$ of relative semisimple rank one over a nonarchimedean field $F$.  Examples of groups include $G = \alg{SL}_2(F)$ or $G = \alg{GL}_2(F)$ or $G = \alg{U}_3(F)$ for a nondegenerate 3-dimensional Hemitian space over a separable quadratic extension $E/F$.  Let $X$ be its (reduced) Bruhat-Tits building, which is a tree.  Let $r$ be a non-negative integer.  Then the system of Moy-Prasad subgroups $N_x = G_{x,r+}$ is a plumbing system, and $N_e = N_x N_y$ coincides with the Moy-Prasad subgroup $G_{e,r+}$.
\end{example}

\begin{example}
\label{Ex2}
With $G = \alg{G}(F)$ as in the previous example, let $\alg{\tilde G}$ be a central extension of $\alg{G}$ by $\alg{K}_2$, as studied by Brylinski and Deligne \cite{BD}.  Let $n$ be a positive integer, and let $\mu_n = \{ \zeta \in F : \zeta^n = 1 \}$.  Suppose $\# \mu_n = n$.  This gives a topological central extension,
$$1 \To \mu_n \To \tilde G \To G \To 1.$$
If $r$ is sufficiently large, then this extension $\tilde G$ splits over $G_{x,r+}$ for all $x$; moreover, there exists $s \geq r$ such that any two such splittings coincide when restricted to $G_{x,s+}$.  In this way, when $s$ is a sufficiently large integer, the system of Moy-Prasad subgroups $N_x = G_{x,s+}$ is a plumbing system for $\tilde G$ acting on $X$.  (Here the action of $\tilde G$ on $X$ factors through the quotient $G$.) 
\end{example}

\begin{example}
\label{Ex3}
With $\alg{G}$ as in the previous examples, suppose that $\alg{H}$ is a not-necessarily-connected reductive group over $F$ whose neutral component $\alg{H}^\circ$ equals $\alg{G}$.  Thus $H = \alg{H}(F)$ contains $G$ as a finite-index normal subgroup.  As shown in Bruhat-Tits, $H$ still acts on the building $X$.  When $r$ is a non-negative integer, we may consider the minimal congruent filtration $G_{x,r+}^{\minc}$ as defined by Yu in \cite{YuSmoothModels}.  As noted in \S9.4 of {\em loc.cit.}, this is defined via a congruence subgroup of a dilatation of the stabilizing group scheme of $x$ in $\alg{G}$.  Such constructions are respected by the action of the $H = \alg{H}(F)$, and so the minimal congruent filtration satisfies $H$-equivariance.  The root group decomposition of $G_{x,r+}^{\minc}$ gives locality and convexity.  Thus the minimal congruent filtration provides a plumbing system for the action of $H$ on $X$.
\end{example}

\subsection{Cogeneration and generation}

Let $(\pi, V)$ be a representation of $G$, and let $\NN$ be plumbing system.  Then we can look at the invariants and coinvariants of $V$ at each vertex $x \in X^0$.  There are canonical maps given by summation and projection,
$$\Sigma_\NN \From \bigoplus_{x \in X^0} V^{N_x} \To V \text{ and } P_\NN \From V \To \prod_{x \in X^0} V_{N_x}.$$
\begin{definition}
We say that $(\pi, V)$ is $\NN$-cogenerated if $P_\NN$ is injective, i.e., every nonzero vector $v \in V$ has nonzero image in some space of coinvariants $V_{N_x}$.  We say that $(\pi, V)$ is $\NN$-generated if $\Sigma_\NN$ is surjective, i.e., $V$ is spanned by its subspaces of invariants $V^{N_x}$.  
\end{definition}

For irreducible representations, generation and cogeneration is easily detected.
\begin{proposition}
Suppose that $(\pi, V)$ is an irreducible representation of $G$.  Then $(\pi, V)$ is $\NN$-cogenerated if and only if $V_{N_x} \neq 0$ for some $x \in X^0$.  Similarly, $(\pi, V)$ is $\NN$-generated if and only if $V^{N_x} \neq 0$ for some $x \in X^0$.
\end{proposition}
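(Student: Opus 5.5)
The plan is to show that the image of $\Sigma_\NN$ and the kernel of $P_\NN$ are $G$-subrepresentations of $V$, and then let irreducibility force them to be $0$ or $V$. Only the Equivariance axiom of a plumbing system is needed; locality and convexity play no role.

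\emph{Cogeneration.} First, if $(\pi,V)$ is $\NN$-cogenerated, then $P_\NN$ is injective and $V \neq 0$, since irreducible representations are nonzero. So some $V_{N_x}$ must be nonzero. Conversely, suppose $V_{N_x} \neq 0$ for some $x$. Let $K = \Ker(P_\NN) = \bigcap_{x \in X^0} V[N_x]$.

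I claim $K$ is $G$-stable. Take $g \in G$ and $v \in V$. Equivariance gives $gN_xg^{-1} = N_{gx}$, and hence
$$\pi(g)\bigl(\pi(h)v - v\bigr) = \pi(ghg^{-1})\pi(g)v - \pi(g)v.$$
This shows $\pi(g) V[N_x] = V[N_{gx}]$. Since $x \mapsto gx$ is a bijection of $X^0$, the intersection $K$ is preserved by $\pi(g)$.

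By irreducibility, $K = 0$ or $K = V$. If $K = V$, then $V[N_x] = V$ for every $x$, so every $V_{N_x} = 0$, contradicting the hypothesis. Hence $K = 0$, and $P_\NN$ is injective.

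\emph{Generation.} The argument is dual. If $\Sigma_\NN$ is surjective and $V \neq 0$, then some $V^{N_x}$ is nonzero. Conversely, let $I = \sum_x V^{N_x}$, which is the image of $\Sigma_\NN$. The same conjugation identity gives $\pi(g) V^{N_x} = V^{N_{gx}}$, so $I$ is a $G$-subrepresentation. If some $V^{N_x} \neq 0$, then $I \neq 0$, so irreducibility gives $I = V$, i.e., $\Sigma_\NN$ is surjective.

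There is no real obstacle here. The only step needing care is checking that $G$ permutes the subspaces $V[N_x]$ and $V^{N_x}$, and this is exactly what the Equivariance axiom provides. We also use the convention that irreducible representations are nonzero.
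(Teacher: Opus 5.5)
Your proof is correct and follows essentially the same route as the paper: irreducibility forces the $G$-stable subspaces $\Ker(P_\NN)$ and the span of the invariants $V^{N_x}$ to be $0$ or $V$, with the Equivariance axiom supplying $G$-stability. The differences are cosmetic: for generation the paper takes the $G$-translates of a single nonzero $V^{N_x}$ rather than the full image of $\Sigma_\NN$, and you verify $\pi(g)V[N_x] = V[N_{gx}]$ explicitly where the paper simply asserts that $\Ker(P_\NN)$ is a subrepresentation.
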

\begin{proof}
First, suppose that $(\pi, V)$ is $\NN$-generated.  Then $V$ is spanned by $\{ V^{N_x} : x \in X^0 \}$.  Hence $V^{N_x} \neq 0$ for some $x \in X^0$.  Conversely, suppose $V^{N_x} \neq 0$ for some $x \in X^0$.  Then irreducibility implies $V$ is spanned by $\{ \pi(g) v : v \in V^{N_x} \text{ and } g \in G \}$.  But when $v \in V^{N_x}$, we find $\pi(g) v \in V^{N_{gx}}$.  Hence $V = \sum_{g \in G} V^{N_{gx}}$, and so $V$ is $\NN$-generated.

Next, suppose that $(\pi, V)$ is $\NN$-cogenerated.  Then if $0 \neq v \in V$, its image is nonzero in some $V_{N_x}$.  Hence $V_{N_x} \neq 0$ for some $x \in X^0$.  Conversely, suppose that $V_{N_x}$ is nonzero for some $x \in X^0$.  Let $K = \Ker(P_\NN) \subset V$.  Then $K$ is a $G$-subrepresentation of $(\pi, V)$, so $K = 0$ or $K = V$.  Note that $K = V$ iff $V_{N_x} = 0$ for all $x \in X^0$.  Thus $K = 0$, so $P_\NN$ is injective, and $(\pi, V)$ is $\NN$-cogenerated.
\end{proof}

Define $\Cat{Rep}^\NN(G)$ be the category of $\NN$-generated representations, and $\Cat{Rep}_\NN(G)$ the category of $\NN$-cogenerated representations; both are additive $k$-linear full subcategories of $\Cat{Rep}(G)$.  The subcategory $\Cat{Rep}^\NN(G) \subset \Cat{Rep}(G)$ is stable under quotients, while $\Cat{Rep}_\NN$ is stable under sub-objects.  

\begin{remark}
In key examples, the category $\Cat{Rep}^\NN(G)$ is a Serre subcategory, i.e., closed under sub-objects and quotients.  This is crucial for some of the proofs in \cite{MS} and \cite{BS}.  Here we do not require such a strong hypothesis on $\NN$.
\end{remark}
 
%\begin{definition}
%We say that $(\pi, V)$ is $\NN$-characteristic if $V$ is $N_x$-characteristic for all $x \in X^0$.  In this case, $V$ is $N_e$-characteristic for all $e \in X^1$, and we define
%$$\pr_x = \pr_{N_x} \text{ and } \pr_e = \pr_{N_e},$$
%for the resulting idempotent projections on $V$ for all vertices and edges.
%\end{definition}

%\begin{proposition}
%If $(\pi, V)$ is $\NN$-characteristic, then $(\pi, V)$ is $\NN$-generated if and only if $(\pi, V)$ is $\NN$-cogenerated.
%\end{proposition}
%\begin{proof}
%IS THIS TRUE?  PROVE IT!  FALSE?  MODIFY THEN PROVE!
%\end{proof}

\subsection{Localization of representations}

Let $(\pi, V)$ be a representation of $G$, and let $\NN$ be a plumbing system.  We can use this to define a $G$-equivariant sheaf on $X$ as follows.  If $x \in X^0$, define $\sheaf{S}_x = V_{N_x}$.  Similarly, if $e = X^1$, define $\sheaf{S}_e = V_{N_e}$.  If $e = \edge{xy}$, recall that $N_e = N_x N_y = N_y N_x \ltr G_e$; thus $\sheaf{S}_e$ is naturally a quotient of both $\sheaf{S}_x$ and $\sheaf{S}_y$, providing surjective restriction maps $\sheaf{S}_x \Onto \sheaf{S}_e$ and $\sheaf{S}_y \Onto \sheaf{S}_e$.  This defines the sheaf $\sheaf{S}$.

The $G$-equivariant structure on $\sheaf{S}$ arises from the equivariance of $\NN$ and the representation $(\pi, V)$.  Namely, if $x \in X^0$ and $g \in G$, then $g N_x g^{-1} = N_{gx}$.  Hence $\pi(g)$ gives a well-defined map from $V / V[N_x]$ to $V / V[N_{gx}]$.  This is the desired equivariant structure $\rho(g,x) \From \sheaf{S}_x \To \sheaf{S}_{gx}$, and the same line of reasoning gives $\rho(g,e)$ for every edge $e$.  This completes the construction of the $G$-equivariant sheaf $\sheaf{S}$.
\begin{definition}
The $G$-equivariant sheaf $\sheaf{S}$ is called the {\em localization} of $(\pi, V)$, with respect to the plumbing system $\NN$.  This construction defines a right-exact $k$-linear additive functor,
$$\loc_{\NN} \From \Cat{Rep}(G) \To \Cat{Sh}_G(X).$$
(Right-exactness follows from the right-exactness of the coinvariant functor.)
\end{definition}

As the name suggests, vectors in the representation $(\pi, V)$ correspond to global sections of its localization $\sheaf{S} = \loc_{\NN}(\pi, V)$.  Indeed, if $f \in V$, we can consider its images $f_x \in \sheaf{S}_x = V_{N_x}$ and $f_e \in \sheaf{S}_e = V_{N_e}$ for every vertex $x$ and edge $e$.  By construction, it is clear that $\Res_{x,e} f_x = f_e$ whenever $x < e$.  In this way, $f \in H^0(X, \sheaf{S})$.  This provides a $G$-intertwining homomorphism,
$$\loc \From V \To H^0(X, \sheaf{S}).$$
Note that we use the same notation $\loc$ for the localization {\em functor} (from the category of representations to the category of sheaves), and for the localization {\em intertwining map} (from the particular representation $(\pi, V)$ to the global sections of the particular sheaf $\sheaf{S} = \loc_\NN(\pi, V)$).

The following just restates previous definitions.
\begin{proposition}
Let $\NN$ be a plumbing system, and let $\sheaf{S} = \loc_{\NN}(\pi, V)$ as before.  Then $(\pi, V)$ is $\NN$-cogenerated if and only if $\loc \From V \To H^0(X, \sheaf{S})$ is injective.
\end{proposition}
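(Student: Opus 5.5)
The plan is to unwind definitions and compare the two maps $P_\NN$ and $\loc$. I will show that they have the same kernel, so one is injective exactly when the other is.

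First, observe that $H^0(X, \sheaf{S})$, the space of all (not necessarily compactly supported) global sections, sits inside $\prod_{x \in X^0} \sheaf{S}_x \times \prod_{e \in X^1} \sheaf{S}_e$. Since every restriction map is determined by the vertex components, the projection to $\prod_{x \in X^0} \sheaf{S}_x$ is injective on global sections. Indeed, a global section $f$ has $f_e = \Res_{x,e} f_x$ for any endpoint $x$ of $e$, and every edge has an endpoint. So we have an injection
$$\iota \From H^0(X,\sheaf{S}) \Into \prod_{x\in X^0} \sheaf{S}_x = \prod_{x \in X^0} V_{N_x}.$$

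Next, check that $\iota \circ \loc = P_\NN$. For $f \in V$, the component of $\loc(f)$ at $x$ is the image of $f$ in $V_{N_x}$, by construction. That is exactly the $x$-component of $P_\NN(f)$.

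Since $\iota$ is injective, $\Ker(\loc) = \Ker(\iota\circ\loc) = \Ker(P_\NN)$. Therefore $\loc$ is injective if and only if $P_\NN$ is injective, which is the definition of $(\pi,V)$ being $\NN$-cogenerated.

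There is no real obstacle here. The only point needing care is that a section is determined by its vertex stalks, and this uses that every edge has an endpoint at which its value is a restriction.
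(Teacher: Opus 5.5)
Your proof is correct and is essentially the paper's argument: the paper notes that $\Ker(\loc)$ consists of those $f$ with $f_x = 0$ at every vertex $x$, which is exactly $\Ker(P_\NN)$. Your injectivity of $\iota$ spells out the step the paper leaves implicit, namely that $f_e = \Res_{x,e} f_x$, so vanishing at every vertex forces vanishing at every edge.
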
 
\begin{proof}
The kernel of $\loc$ consists of those vectors $f \in V$ for which $f_x = 0$ for all $x \in X^0$.  But that is precisely the kernel of $P_\NN \From V \To \prod_x V_{N_x}$.  
\end{proof}

\subsection{Support}

Suppose that $(\pi, V)$ is a representation of $G$, and $\sheaf{S} = \loc_{\NN}(\pi, V)$.  When $f \in V$, we write $\Supp_\NN(f)$ to denote the support of the associated global section $\loc(f)$.  This is a closed subset of the tree $X$:
$$\Supp_\NN(f) = \{ x \in X^0 : f_x \neq 0 \} \cup \{ e \in X^1 : f_e \neq 0 \}.$$

Cogeneration can be interpreted using supports.
\begin{proposition}
The representation $(\pi, V)$ is $\NN$-cogenerated if and only if every nonzero vector $f \in V$ has nonempty support.
\end{proposition}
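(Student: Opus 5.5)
This is essentially an unwinding of definitions, just as in the preceding proposition relating cogeneration to injectivity of $\loc$. The plan is to show that a vector $f \in V$ has empty support $\Supp_\NN(f)$ exactly when $f$ lies in the kernel of $P_\NN \From V \To \prod_{x \in X^0} V_{N_x}$. Once that is established, the statement reads: $P_\NN$ is injective if and only if the only vector with empty support is $0$. That is precisely the claim.

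The first step is to observe that the support is determined by its vertices. Suppose $f_e \neq 0$ for some edge $e = \edge{xy}$. By construction $f_e = \Res_{x,e} f_x$, and $\Res_{x,e}$ is the canonical surjection $V_{N_x} \Onto V_{N_e}$. So $f_x = 0$ would force $f_e = 0$. Hence every edge in $\Supp_\NN(f)$ has an endpoint (in fact both endpoints) in $\Supp_\NN(f)$. It follows that $\Supp_\NN(f)$ is empty if and only if $f_x = 0$ for all $x \in X^0$.

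The second step is to note that $f_x$ is by definition the image of $f$ in $V_{N_x}$. So "$f_x = 0$ for all $x$" says exactly that $P_\NN(f) = 0$. Combining this with the first step gives $\{ f : \Supp_\NN(f) = \emptyset \} = \Ker(P_\NN)$.

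Finally, $(\pi, V)$ is $\NN$-cogenerated if and only if $\Ker(P_\NN) = 0$, which holds if and only if every nonzero $f$ has nonempty support. Equivalently, one can cite the preceding proposition: $\Ker(\loc) = \Ker(P_\NN)$, and a global section vanishes exactly when its support is empty. No step presents a real obstacle. The only point needing care is the edge contribution to the support, and it is handled by the compatibility $f_e = \Res_{x,e} f_x$.
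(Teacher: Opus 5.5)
Your proof is correct and follows the paper's reasoning. The paper states this proposition without proof as a restatement of the preceding one, whose proof is your observation that $\Ker(P_\NN)$ is exactly the set of vectors with $f_x = 0$ at every vertex; your remark that edges add nothing to the support because $f_e = \Res_{x,e} f_x$ (linearity alone suffices here, surjectivity is not needed) fills in the one detail the paper leaves implicit.
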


Supports cannot grow under intertwining operators.
\begin{lemma}
\label{cpt-image}
Suppose that $\phi \From (\pi, V) \To (\rho, W)$ is a $G$-intertwining operator.  Then for all $f \in V$, we have $\Supp_\NN(\phi(f)) \subset \Supp_\NN(f)$.
\end{lemma}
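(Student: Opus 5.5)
The plan is to show the contrapositive pointwise: if a vertex $x$ (or an edge $e$) lies outside $\Supp_\NN(f)$, then it also lies outside $\Supp_\NN(\phi(f))$. The key input is that a $G$-intertwining operator carries $V[H]$ into $W[H]$ for every subgroup $H \subset G$. Indeed, for $h \in H$ and $v \in V$,
$$\phi(\pi(h) v - v) = \rho(h)\phi(v) - \phi(v) \in W[H].$$
Consequently $\phi$ descends to a well-defined linear map on coinvariants, $\phi_H \From V_H \To W_H$. This map is compatible with the canonical quotient maps: the image of $\phi(f)$ in $W_H$ equals $\phi_H$ applied to the image of $f$ in $V_H$.

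First take $H = N_x$ for a vertex $x$. If $f_x = 0$ in $V_{N_x}$, then $f \in V[N_x]$, so $\phi(f) \in W[N_x]$, and hence $\phi(f)_x = 0$ in $W_{N_x}$. Next take $H = N_e$ for an edge $e$. The identical argument shows that $f_e = 0$ implies $\phi(f)_e = 0$.

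For edges one could alternatively argue from the vertex case, since $f_e = \Res_{x,e} f_x$. That route only yields $f_e = 0$ when $f_x = 0$, which is insufficient, so the direct coinvariant argument with $N_e$ is the correct one.

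Combining the vertex and edge cases gives $\Supp_\NN(\phi(f)) \subset \Supp_\NN(f)$. There is no real obstacle here; the only point requiring care is to treat edges separately, using $N_e$ rather than restriction from vertices. No properties of the plumbing system beyond the existence of the subgroups $N_x$ and $N_e$ are needed.
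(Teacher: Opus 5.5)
Your proof is correct and is essentially the paper's argument: the paper invokes functoriality of localization, noting that $\phi(f)_x = [\loc_\NN \phi](f_x)$, and your observation that $\phi(V[H]) \subset W[H]$, so that $\phi$ descends to coinvariants, is exactly what defines $\loc_\NN \phi$ on stalks. Your separate treatment of edges via $N_e$ also makes explicit a step the paper covers only with ``the result follows.''
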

\begin{proof}
Let $\sheaf{T} = \loc_{\NN}(\rho, W)$ be the resulting equivariant sheaf, so that $\phi$ induces a morphism of equivariant sheaves,
$$\loc_\NN \phi \From \sheaf{S} \To \sheaf{T}.$$
If $f \in V$, then $\phi(f)_x \in \sheaf{T}_x$ coincides with $[\loc_\NN \phi]( f_x)$.  Hence $f_x = 0$ implies $\phi(f)_x = 0$.  The result follows.
\end{proof}

\begin{definition}
We say that $f \in V$ is an $\NN$-compact vector if $\Supp(f)$ is a compact subset of $X$.  Equivalently, since $\Supp_\NN(f)$ is closed, $f$ is $\NN$-compact if and only if $\Supp_\NN(f)$ is finite.
\end{definition}

\begin{proposition}
The $\NN$-compact vectors form a $G$-subrepresentation of $(\pi, V)$.  Intertwining operators send compact vectors to compact vectors.
\end{proposition}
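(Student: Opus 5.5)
The plan is to reduce everything to three facts: how supports behave under sums and scalars, the $G$-equivariance of supports, and Lemma \ref{cpt-image}. Throughout, use the description of $\Supp_\NN(f)$ as the set of vertices and edges where the image of $f$ in the relevant coinvariant space is nonzero. Also use the characterization that $f$ is $\NN$-compact exactly when $\Supp_\NN(f)$ is finite.

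\textbf{Closure under linear combinations.} Let $f, f' \in V$ and $a, b \in k$. The projections $V \Onto V_{N_x}$ and $V \Onto V_{N_e}$ are linear, so
$$(af + bf')_x = a f_x + b f'_x \quad\text{and}\quad (af + bf')_e = a f_e + b f'_e.$$
Hence $\Supp_\NN(af + bf') \subset \Supp_\NN(f) \cup \Supp_\NN(f')$. A union of two finite sets is finite, so the $\NN$-compact vectors form a subspace. The zero vector has empty support, so this subspace is nonempty.

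\textbf{Stability under $G$.} Fix $g \in G$ and $f \in V$. The equivariant structure of $\sheaf{S} = \loc_\NN(\pi, V)$ is induced by $\pi(g)$, which maps $V/V[N_x]$ isomorphically onto $V/V[N_{gx}]$. Therefore $(\pi(g) f)_{gx} = \rho(g,x)(f_x)$, and since $\rho(g,x)$ is an isomorphism, $(\pi(g)f)_{gx} \neq 0$ if and only if $f_x \neq 0$. The same argument applies to edges. This gives
$$\Supp_\NN(\pi(g) f) = g \cdot \Supp_\NN(f).$$
The map $g$ is a bijection on $X^0 \sqcup X^1$, so this set is finite whenever $\Supp_\NN(f)$ is finite. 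Consequently the $\NN$-compact vectors form a $G$-subrepresentation.

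\textbf{Intertwining operators.} Let $\phi \From (\pi,V) \To (\rho,W)$ be a $G$-intertwining operator. Lemma \ref{cpt-image} gives $\Supp_\NN(\phi(f)) \subset \Supp_\NN(f)$. A subset of a finite set is finite, so $\phi$ sends $\NN$-compact vectors of $V$ to $\NN$-compact vectors of $W$.

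\textbf{Expected difficulty.} There is no real obstacle. The only point needing care is checking the identity $(\pi(g)f)_{gx} = \rho(g,x)(f_x)$ directly from the construction of the equivariant structure. This rests on the equivariance axiom $gN_xg^{-1} = N_{gx}$, which gives $\pi(g)V[N_x] = V[N_{gx}]$.
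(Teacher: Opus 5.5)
Your proof is correct and follows the paper's argument: the paper also derives the result from Lemma \ref{cpt-image} together with the support identities $\Supp_\NN(\pi(g) f) = g \cdot \Supp_\NN(f)$ and $\Supp_\NN(f_1 + f_2) \subset \Supp_\NN(f_1) \cup \Supp_\NN(f_2)$. You supply details the paper leaves implicit, notably the check $\pi(g) V[N_x] = V[N_{gx}]$ that gives equivariance of supports.
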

\begin{proof}
This follows from Lemma \ref{cpt-image}, and basic facts about support.  For example, $\Supp_\NN( \pi(g) f) = g \cdot \Supp_\NN(f)$, and $\Supp_\NN(f_1 + f_2) \subset \Supp_\NN(f_1) \cup \Supp_\NN(f_2)$.
\end{proof}

\begin{definition}
We say that $(\pi, V)$ is $\NN$-compact if for all $v \in V$, $\Supp_\NN(v)$ is compact.  Let $\Cat{Rep}_\NN^{\cpt}(G)$ be the full subcategory of $\Rep(G)$ consisting of $\NN$-compact $\NN$-cogenerated representations.
\end{definition}

The importance of this definition is the following observation.
\begin{proposition}
\label{inj-loc}
Suppose that $(\pi, V)$ is an $\NN$-compact and $\NN$-cogenerated representation of $G$.  Then localization gives an injective homomorphism,
$$\loc \From V \Into H_c^0(X, \sheaf{S}),$$
\end{proposition}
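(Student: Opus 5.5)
The plan is to combine the earlier characterization of cogeneration via $\loc$ with the identification of $H_c^0(X,\sheaf{S})$ as the space of compactly supported global sections. First, since $(\pi,V)$ is $\NN$-cogenerated, the preceding proposition (cogeneration is equivalent to injectivity of $\loc \From V \To H^0(X,\sheaf{S})$) tells us that $\loc$ is injective as a map into all global sections. So it only remains to check that the image lands inside the compactly supported sections $H_c^0(X,\sheaf{S})$.

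For this, take $f \in V$ and its section $\loc(f) = (f_x)_{x \in X^0}$ with $f_x \in \sheaf{S}_x = V_{N_x}$. By $\NN$-compactness, $\Supp_\NN(f)$ is finite. In particular, only finitely many $f_x$ are nonzero, so $(f_x)$ is an element of $\bigoplus_{x} \sheaf{S}_x = C_c^0(X,\sheaf{S})$. Next, by construction $\Res_{x,e} f_x = f_e = \Res_{y,e} f_y$ for every edge $e = \edge{xy}$, since both are the image of $f$ in $V_{N_e}$. Hence $d(\loc f)_e = [e:x]\Res_{x,e}f_x + [e:y]\Res_{y,e}f_y = f_e - f_e = 0$, and so $\loc(f) \in \Ker(d) = H_c^0(X,\sheaf{S})$. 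This is exactly the description of $H_c^0$ as compactly supported global sections given earlier.

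Finally, the map is $G$-intertwining because $\loc$ already is (the equivariant structure is induced by $\pi(g)$ on coinvariants), and $H_c^0(X,\sheaf{S}) \subset H^0(X,\sheaf{S})$ is $G$-stable. Injectivity is inherited from the map into $H^0$.

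There is no serious obstacle here. The only point needing care is the identification of $\Ker(d)$ with the compactly supported sections satisfying the restriction compatibility, which has already been noted in the text, together with the sign bookkeeping $[e:x]=1$, $[e:y]=-1$ in the coboundary.
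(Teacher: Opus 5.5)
Your proposal is correct and follows the paper's approach. The paper gives no separate proof: it treats the statement as an immediate observation combining the preceding proposition ($\NN$-cogeneration is equivalent to injectivity of $\loc \colon V \to H^0(X, \sheaf{S})$) with the finiteness of $\Supp_\NN(f)$ for $\NN$-compact vectors, and your explicit check that $d(\loc f) = 0$ simply spells out the identification of $H_c^0(X,\sheaf{S})$ with compactly supported compatible sections.
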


\subsection{Localization via invariants}

While localization makes sense as a functor from $\Cat{Rep}(G)$ to $\Cat{Sh}_G(X)$ in the broad generality of the previous sections, it does not have particularly nice properties until we make a crucial assumption about characteristic.
\begin{definition}
We say that $(\pi, V)$ is $\NN$-characteristic if $V$ is $N_x$-characteristic for all $x \in X^0$.  In this case, we define $\pr_x = \pr_{N_x}$ for the resulting idempotent projections on $V$ for all vertices $x \in X^0$.
\end{definition}
Recall that if $e = \edge{xy}$, then $N_e = N_x N_y = N_y N_x \ltr G_e$.  The following seems a bit subtle, perhaps because we grew up studying invariants rather than coinvariants.
\begin{proposition}
If $(\pi, V)$ is $\NN$-characteristic, then $V$ is $N_e$-characteristic for all $e \in X^1$.  Writing $\pr_e$ for $\pr_{N_e}$, we have
$$\pr_e = \pr_x \pr_y = \pr_y \pr_x.$$
\end{proposition}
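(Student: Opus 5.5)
The plan is to build the projection $P = \pr_x \pr_y$ explicitly, show that it is an idempotent commuting with $\pr_y \pr_x$, and then identify its image with $V^{N_e}$ and its kernel with $V[N_e]$. This gives the decomposition $V = V^{N_e} \oplus V[N_e]$ and simultaneously identifies $\pr_e$ with $P$.

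\textbf{Step 1: commutation.} Since $N_x$ and $N_y$ are both normal in $G_e$, we have $N_y \subset G_e \subset N_G(N_x)$. By the remark after the definition of $H$-characteristic, $\pr_x$ is an $N_G(N_x)$-intertwining operator, so $\pr_x$ commutes with $\pi(n)$ for every $n \in N_y$. Consequently $\pr_x$ preserves both $V^{N_y}$ and $V[N_y]$. Now use the decomposition $V = V^{N_y} \oplus V[N_y]$, on which $\pr_y$ acts by $1$ and $0$. Since $\pr_x$ preserves both summands, it commutes with $\pr_y$. Hence $P := \pr_x \pr_y = \pr_y \pr_x$, and $P$ is idempotent.

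\textbf{Step 2: the image.} The image of $P$ is $V^{N_x} \cap V^{N_y}$. Indeed, $P$ lands in the image of $\pr_x$ and, by commutation, also in the image of $\pr_y$. Conversely, $P$ fixes any vector fixed by both $N_x$ and $N_y$. Since $N_e = N_x N_y$, this intersection is exactly $V^{N_e}$.

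\textbf{Step 3: the kernel.} Write
$$1 - P = (1 - \pr_x) + \pr_x (1 - \pr_y).$$
The first term has image $V[N_x]$. The second has image $\pr_x V[N_y] \subset V[N_y]$, by Step 1. So $\Ker P = \Im(1-P) \subset V[N_x] + V[N_y]$. The reverse inclusion holds because $P = \pr_y \pr_x$ kills $V[N_x]$ and $P = \pr_x \pr_y$ kills $V[N_y]$. Thus $\Ker P = V[N_x] + V[N_y]$.

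\textbf{Step 4 (main point): $V[N_e] = V[N_x] + V[N_y]$.} The inclusion $\supseteq$ is clear, since $N_x, N_y \subset N_e$. For $\subseteq$, take $n_x \in N_x$, $n_y \in N_y$ and $v \in V$, and write
$$\pi(n_x n_y) v - v = \pi(n_x)\bigl(\pi(n_y) v - v\bigr) + \bigl(\pi(n_x) v - v\bigr).$$
The second term lies in $V[N_x]$. The first lies in $V[N_y]$, because $N_x \subset G_e$ normalizes $N_y$, so $\pi(n_x)$ preserves $V[N_y]$. Since every element of $N_e$ has the form $n_x n_y$, the claim follows.

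\textbf{Conclusion.} Since $P$ is idempotent, Steps 2--4 give
$$V = \Im P \oplus \Ker P = V^{N_e} \oplus V[N_e].$$
So $V$ is $N_e$-characteristic. Moreover $P$ is the projection onto $V^{N_e}$ along $V[N_e]$, which is $\pr_e$. Hence $\pr_e = \pr_x \pr_y = \pr_y \pr_x$.
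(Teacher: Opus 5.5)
Your argument is correct, and it is organized differently from the paper's. The paper never isolates the commutation $\pr_x\pr_y=\pr_y\pr_x$. It takes $V^{N_e}=V^{N_x}\cap V^{N_y}$ and $V[N_e]=V[N_x]+V[N_y]$ as an exercise. It then proves $V=V^{N_e}+V[N_e]$ by writing $v=\pr_y\pr_x(v)+s_{xy}+s_x$ and using uniqueness of the $N_y$-decomposition to see that $\pr_y\pr_x(v)$ is still $N_x$-fixed. Finally it proves $V^{N_e}\cap V[N_e]=0$ by observing that $\pr_y$ preserves $V[N_x]$.

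You instead first show that the two idempotents commute: $\pr_x$ commutes with $\pi(N_y)$, so it preserves both summands of $V=V^{N_y}\oplus V[N_y]$. You then read everything off from the linear-algebra fact that a product of commuting idempotents is an idempotent whose image is the intersection of the images and whose kernel is the sum of the kernels. The essential input is the same in both arguments: each of $N_x,N_y$ normalizes the other, so each projection is equivariant for the other subgroup. Your packaging delivers the full identity $\pr_e=\pr_x\pr_y=\pr_y\pr_x$ at once. The paper's proof only exhibits $\pr_e=\pr_y\pr_x$ and leaves the other order to symmetry. You also supply the proof of $V[N_e]=V[N_x]+V[N_y]$ that the paper omits.

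One justification must be repaired. You assert that $N_x$ and $N_y$ are normal in $G_e$, hence $G_e\subset N_G(N_x)$. This is false when $G_e$ contains an element $g$ inverting $e$, which the paper's conventions allow (e.g.\ for $\GL_2(F)$); equivariance then gives $gN_xg^{-1}=N_{gx}=N_y$. The paper's remark right after the plumbing axioms makes the same overstatement.

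What Steps 1 and 4 actually use is only that $N_y$ normalizes $N_x$ and $N_x$ normalizes $N_y$. This holds because $N_y\subset G_e\cap G_y=G_e^\circ\subset G_x$ and $N_x\ltr G_x$ by equivariance, and symmetrically with $x$ and $y$ swapped. This is how the paper's own proof argues, via $N_x\subset G_x\cap G_y$. With that substitution your proof goes through unchanged.
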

\begin{proof}
Since $N_e = N_x N_y = N_y N_x$, it is a quick exercise to show that
\begin{align*}
V^{N_e} &= V^{N_x} \cap V^{N_y}; \\
V[N_e] &= V[N_x] + V[N_y].
\end{align*}

To prove that $(\pi, V)$ is $N_e$-characteristic, we prove that $V = V^{N_e} \oplus V[N_e]$.  Begin with $v \in V$.  Since $V$ is $N_x$-characteristic, there exist unique $v_x \in V^{N_x}$ and $s_x \in V[N_x]$ such that 
$$v = v_x + s_x.$$
This $v_x$ equals $\pr_x(v)$.  Since $V$ is $N_y$-characteristic, there exist unique $v_{xy} \in V^{N_y}$ and $s_{xy} \in V[N_y]$ such that
$$v_x = v_{xy} + s_{xy}.$$

Since $N_x \subset G_x \cap G_y$, we find that $N_x$ normalizes $N_y$.  Hence if $n \in N_x$, then $\pi(n) v_{xy} \in V^{N_y}$ and $\pi(n) s_{xy} \in V[N_y]$.  Since $\pi(n) v_x = v_x$, we have
$$v_x = \pi(n) v_x = \pi(n) v_{xy} + \pi(n) s_{xy}$$
and the uniqueness implies that $\pi(n) v_{xy} = v_{xy}$.  Therefore $ v_{xy} = \pr_y(\pr_x(v)) \in V^{N_x} \cap V^{N_y}$.  Putting this together, we find that 
$$v = v_{xy} + s_{xy} + s_x \in V^{N_e} + V[N_y] + V[N_x] = V^{N_e} + V[N_e].$$
This proves that 
$$V = V^{N_e} + V[N_e].$$

It remains to prove that $V^{N_e} \cap V[N_e] = 0$.  Let us suppose $v \in V^{N_e} \cap V[N_e] = V^{N_e} \cap (V[N_x] + V[N_y])$.  Then we have $v = w_x + w_y$ for some $w_x \in V[N_x]$ and $w_y \in V[N_y]$.  Since $\pr_y(w_y) = 0$, we have
$$v = \pr_x \pr_y(v) = \pr_x \pr_y(w_x).$$
Since $w_x \in V[N_x]$, and every element $n \in N_x$ normalizes $N_y$, we find that $\pr_y(w_x) \in V[N_x]$.  Thus $v = \pr_x(\pr_y(w_x)) = 0$.  
\end{proof}

Thus when $(\pi, V)$ is $\NN$-characteristic, there are canonical isomorphisms,
$$\sheaf{S}_x = V_{N_x} \ident V^{N_x} \text{ and } \sheaf{S}_e = V_{N_e} \ident V^{N_e}.$$
Moreover, these canonical isomorphisms intertwine the restriction maps with the projection maps as below.
\begin{lemma}
If $x < e$, then the following diagram commutes.
$$
\begin{tikzcd}
V^{N_x} \arrow[r, "\pr_e"] \arrow[d, "c_x"] & V^{N_e} \arrow[d, "c_e"] \\
V_{N_x} \arrow[r, "\Res_{x,e}"]  & V_{N_e}
\end{tikzcd}
$$
\end{lemma}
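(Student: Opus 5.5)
Fix $v \in V^{N_x}$ and chase it around the square. Along the bottom-left route, $c_x(v)$ is the class $v + V[N_x] \in V_{N_x}$. The restriction map $\Res_{x,e}$ is the natural quotient map $V/V[N_x] \Onto V/V[N_e]$; it is well defined because $N_x \subset N_e$ forces $V[N_x] \subset V[N_e]$. So the bottom-left route sends $v$ to $v + V[N_e]$.

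Along the top-right route, $v$ goes to $c_e(\pr_e(v)) = \pr_e(v) + V[N_e]$. Commutativity is therefore equivalent to the single claim
$$v - \pr_e(v) \in V[N_e].$$

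This claim holds for every $v \in V$, not just for $N_x$-fixed vectors. By the preceding proposition, $V$ is $N_e$-characteristic, so $V = V^{N_e} \oplus V[N_e]$. By definition, $\pr_e$ is the projection onto the first summand along the second: it is the composition of $V \Onto V_{N_e}$ with $c_e^{-1}$. Hence $\pr_e(v)$ and $v$ have the same image in $V_{N_e}$, which is exactly the claim.

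One side remark: the top arrow is stated on $V^{N_x}$, and $\pr_e$ indeed maps it into $V^{N_e}$. Alternatively, the formula $\pr_e = \pr_y\pr_x$ from the previous proposition shows $\pr_e(v) = \pr_y(v)$ for $v \in V^{N_x}$. The argument has no real obstacle. The only care needed is to identify $\Res_{x,e}$ correctly as the quotient map and to invoke the $N_e$-characteristic property from the preceding proposition.
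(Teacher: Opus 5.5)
Your proof is correct and takes essentially the same route as the paper: chase $v \in V^{N_x}$ both ways around the square to the coset $v + V[N_e]$, noting that $\pr_e(v)$ represents this same class in $V_{N_e}$ by the definition of $\pr_e$. You also spell out two facts the paper leaves implicit, namely $V[N_x] \subset V[N_e]$ and $c_e(\pr_e(v)) = v + V[N_e]$.
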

\begin{proof}
Suppose that $v \in V^{N_x}$.  Then the canonical map $c_x$ sends $v$ to its coset $v + V[N_x]$ in $V_x = V / V[N_x]$; the restriction map $\Res_{x,e}$ sends this to its coset modulo $V[N_e]$.  Thus 
$$\Res_{x,e} c_x(v) = v + V[N_e].$$
On the other hand, this coset is represented precisely by $\pr_e(v)$, showing that the diagram commutes.
\end{proof}

In this way, we find
\begin{proposition}
When $(\pi, V)$ is $\NN$-characteristic, the sheaf $\sheaf{S}$ is canonically isomorphic to the sheaf whose stalks are the invariants $V^{N_x}$ and $V^{N_e}$, and whose restriction maps are the projections $\pr_e \From V^{N_x} \To V^{N_e}$.
\end{proposition}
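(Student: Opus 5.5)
The plan is to build an explicit isomorphism of sheaves, stalk by stalk, from the canonical maps $c_x \From V^{N_x} \To V_{N_x}$ and $c_e \From V^{N_e} \To V_{N_e}$, and then check that these maps commute with restriction and respect the equivariant structure. First I will define the candidate sheaf $\sheaf{S}'$. For each vertex $x$ set $\sheaf{S}'_x = V^{N_x}$, and for each edge $e$ set $\sheaf{S}'_e = V^{N_e}$. For $x < e$, the restriction map is $\pr_e|_{V^{N_x}}$. This lands in $V^{N_e}$, because $V$ is $N_e$-characteristic by the preceding proposition, so $\pr_e$ is defined and its image is $V^{N_e}$. The equivariant structure on $\sheaf{S}'$ is $\pi(g) \From V^{N_x} \To V^{N_{gx}}$, and likewise on edges. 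This is well defined because $gN_xg^{-1} = N_{gx}$, and hence also $gN_eg^{-1} = N_{ge}$.

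Next I check that $\sheaf{S}'$ really is an equivariant sheaf. Composition of the $\rho$'s is just multiplicativity of $\pi$. For compatibility with restriction I need $\pi(g)\pr_e = \pr_{ge}\pi(g)$. This follows because $\pi(g)$ carries $V^{N_e}$ onto $V^{N_{ge}}$ and $V[N_e]$ onto $V[N_{ge}]$. So $\pi(g)$ maps the decomposition $V = V^{N_e} \oplus V[N_e]$ to $V = V^{N_{ge}} \oplus V[N_{ge}]$, and therefore it conjugates one projection into the other.

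Then I show that the family $(c_x, c_e)$ is an isomorphism $\sheaf{S}' \To \sheaf{S}$. On each stalk the map is an isomorphism, since $V$ is $N_x$-characteristic and, by the preceding proposition, $N_e$-characteristic. Compatibility with restriction is exactly the commutative square of the preceding lemma, $\Res_{x,e} \circ c_x = c_e \circ \pr_e$. Equivariance is the identity $\rho(g,x)\, c_x = c_{gx}\, \pi(g)$. Both sides send $v \in V^{N_x}$ to the coset of $\pi(g)v$ modulo $V[N_{gx}]$, and the same holds on edges. A morphism of equivariant sheaves that is an isomorphism on every stalk has an inverse given stalkwise, and this inverse is automatically compatible with restriction and with $\rho$. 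So we get the claimed canonical isomorphism.

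The one slightly nontrivial point is that $\pr_e$ intertwines $\pi(g)$ between $e$ and $ge$. The rest is bookkeeping built on the preceding proposition and lemma.
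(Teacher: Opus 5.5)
Your proposal is correct and follows the paper's route. The paper deduces the statement directly from the preceding proposition (so that $c_e$ is an isomorphism) and the commutative square $\Res_{x,e}\circ c_x = c_e \circ \pr_e$, exactly as you do; your explicit check that the maps $c_x, c_e$ intertwine $\pi(g)$ with the equivariant structure $\rho$ is left implicit in the paper, so it usefully fills in a detail the paper omits.
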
 

\subsection{Localization for cuspidal representations}

Suppose that $G$ is a second-countable, locally compact, totally disconnected group -- an $\ell$-group in the terminology of Bernstein \cite{Bernstein}.  This covers the key applications we have in mind, to $p$-adic groups including covering groups and disconnected groups as in Examples \ref{Ex1}, \ref{Ex2}, \ref{Ex3}.  
We assume that as $G$ acts on the tree $X$, the center $Z = Z(G)$ acts trivially on $X$.  We also assume that a plumbing system $\NN$ is chosen so that all subgroups $N_x$ are open.

Recall that a representation $(\pi, V)$ of $G$ is called smooth if every vector $v \in V$ has open stabilizer in $G$.  For convenience, in this section, we fix a smooth character $\chi \From Z \To k^\times$, and work with smooth representations having central character $\chi$.  In other words, we assume that for all $z \in Z$ and all $v \in V$,
$$\pi(z) v = \chi(z) \cdot v.$$

There are various notions of ``cuspidal'' in the literature, which are often equivalent.  One relates to the vanishing of Jacquet modules, and thus relies on some reductive group structure.  The other relates to matrix coefficients, as defined below.
\begin{definition}
Let $(\pi, V)$ be a smooth representation of $G$ on a $k$-vector space.  Let $\tilde V$ be the contragredient representation, i.e., the space of smooth vectors in the linear dual $\Hom_k(V, k)$.  If $v \in V$ and $\lambda \in \tilde V$, we define the {\em matrix coefficient} $m_{\lambda, v} \From G \To k$ by
$$m_{\lambda, v}(g) = \lambda \left( \pi(g)^{-1} v \right).$$
\end{definition}

Now we define supercuspidal representations via their matrix coefficients.
\begin{definition}
We say that a smooth representation $(\pi, V)$ is {\em supercuspidal} if for all $v \in V$, $\lambda \in \tilde V$, the function $m_{\lambda, v}$ is compactly supported, modulo $Z$.
\end{definition}

Suppose that $(\pi, V)$ is a smooth $\NN$-characteristic representation of $G$.  If $f \in V$, and $x \in X^0$, we write $f_x = \pr_x(f) \in \sheaf{S}_x = V^{N_x}$.  More generally, we have
$$\pi(g)^{-1} f_{gx} = \pi(g)^{-1} \pr_{gx} f = \pr_x \pi(g)^{-1} f \in V^{N_x}.$$
The following key lemma, its proof and its consequences, are slight adaptations of the work of Bernstein in \cite[Theorem 6]{Bernstein}.

\begin{lemma}
\label{findim-lemma}
Let $(\pi, V)$ be a smooth supercuspidal representation of $G$ with central character $\chi$.  Assume that $(\pi, V)$ is $\NN$-characteristic.  Then for all $f \in V$ and all $x \in X^0$, we have
$$\dim \left( \Span_k \{ \pr_x \pi(g)^{-1} f : g \in G \} \right) < \infty.$$
\end{lemma}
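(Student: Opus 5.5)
The plan is to follow Bernstein's argument: show that the span $W = \Span_k\{\pr_x \pi(g)^{-1} f : g\in G\}$ sits inside $V^{N_x}$ and is finite-dimensional by pairing against the contragredient. First reduce the set of $g$. By smoothness, $f$ is fixed by some open compact subgroup $U$. The map $g \mapsto \pr_x \pi(g)^{-1} f$ is invariant under right multiplication by $N_x$, since $\pi(n)^{-1}$ commutes past $\pr_x$ and is absorbed. It is also invariant under left multiplication by $U$, up to nothing, because $\pi(ug)^{-1} f = \pi(g)^{-1}\pi(u)^{-1} f = \pi(g)^{-1} f$. It is equivariant under $Z$ through the scalar $\chi$. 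So $W$ is spanned by the vectors indexed by the double cosets $U\backslash G/ZN_x$, and it suffices to show that only finitely many double cosets give a nonzero vector.

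Next, identify $W$ with the span of the translates $\pr_x\pi(g^{-1})f$ inside $V^{N_x}$, and use the duality with $\tilde V^{N_x}$. Because $V$ is $N_x$-characteristic, $V^{N_x}$ is a direct summand, so smooth functionals on $V^{N_x}$ extend by zero on $V[N_x]$ to $N_x$-fixed smooth functionals. Concretely, for $\lambda\in\tilde V^{N_x}$ we have
\[
\lambda(\pr_x \pi(g)^{-1} f) = \lambda(\pi(g)^{-1} f) = m_{\lambda,f}(g),
\]
since $\lambda$ kills $V[N_x]$.

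The main obstacle is that supercuspidality gives compact support of $m_{\lambda,f}$ modulo $Z$ only for each \emph{fixed} $\lambda$. We need a support bound that is uniform in $\lambda\in\tilde V^{N_x}$. I would get this as Bernstein does, via a Baire-category or countability argument. $G$ is second countable, so $U\backslash G/ZN_x$ is countable; enumerate its elements $g_1,g_2,\dots$. Suppose infinitely many of the vectors $w_i=\pr_x\pi(g_i)^{-1}f$ were linearly independent modulo the span of the earlier ones. Pass to an infinite subsequence of linearly independent $w_{i_j}$, and extend them to a basis of $V^{N_x}$. We can then define a linear functional $\lambda$ on $V^{N_x}$ with $\lambda(w_{i_j})=1$ for all $j$, and extend it by zero on $V[N_x]$. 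This $\lambda$ is $N_x$-invariant, hence smooth because $N_x$ is open, so $\lambda\in\tilde V$.

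Then $m_{\lambda,f}(g_{i_j})=1$ for infinitely many distinct double cosets $Ug_{i_j}ZN_x$. Such a union cannot lie in any compact-mod-$Z$ set: a compact set meets only finitely many of these open double cosets, since each $UgN_x$ is open. This contradicts supercuspidality. Hence only finitely many of the $w_i$ contribute independent directions, and $\dim W<\infty$.

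The delicate points to check are two. First, the linear independence extraction: choose greedily $w_{i_j}$ not in the span of the previous ones, which gives an independent family. Second, that the extension by zero along $V=V^{N_x}\oplus V[N_x]$ really yields $\lambda(\pi(g)^{-1}f)=\lambda(w)$. Both follow from the $N_x$-characteristic hypothesis and the openness of $N_x$.
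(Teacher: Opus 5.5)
Your proposal is correct and is essentially the paper's argument: a single functional on $V^{N_x}$, extended by zero on $V[N_x]$ (hence $N_x$-invariant and smooth), equal to $1$ on an infinite linearly independent family of the vectors $\pr_x \pi(g)^{-1} f$, gives a matrix coefficient that is nonzero on infinitely many distinct open cosets, contradicting compact support modulo $Z$. The left $U$-reduction, the countability of double cosets, and the mention of Baire category are not needed, since the paper works directly with the right $N_x Z$-cosets, which already suffice because $N_x$ is open.
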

\begin{proof}
Fixing $f$ and $x$ as above, define $\delta(g) = \pr_x \pi(g)^{-1} f \in V^{N_x}$ for all $g \in G$.  Now define
$$\Delta = \{ \delta(g) : g \in G \} \subset V^{N_x}.$$
The goal is to prove that $\dim \Span_k(\Delta) < \infty$.  

Let $F \subset \Delta$ be a maximal linearly independent subset.  There exists a corresponding subset $S \subset G$ for which the map $g \mapsto \delta(g)$ is a bijection from $S$ to $F$.  Thus we have
$$F = \{ \delta(g) : g \in S \} \subset \Delta \subset V^{N_x}.$$
By linear independence, there exists a linear functional on $V^{N_x}$ which takes the value $1$ on every element of $F$.  Pulling this back via $\pr_x$ yields a linear functional $\lambda \From V \To k$ satisfying
$$\lambda = \lambda \circ \pr_x \text{ and } \lambda(\delta(g)) = 1 \text{ for all } g \in S.$$

Since $\lambda = \lambda \circ \pr_x$, we have $\lambda \in \tilde V$.  Thus we can consider the matrix coefficient,
\begin{align*}
m_{\lambda, f}(g) &= \lambda( \pi(g)^{-1} f) \\
&= \lambda( \pr_x \pi(g)^{-1} f) = \lambda(\delta(g)).
\end{align*}
Since $(\pi, V)$ is supercuspidal, the matrix coefficient $m_{\lambda, f}$ is compactly supported modulo $Z$.  Hence there exists a compact subset $K \subset G$ such that $S \subset K Z$.  

Note that $\delta(gn) = \delta(g)$ for all $n \in N_x$, and $\delta(g z) = \chi(z)^{-1} \cdot \delta(g)$ for all $z \in Z$.  Hence the linear independence of $F$ implies that the elements of $S$ lie in distinct $N_x Z$ cosets.  Since $N_x$ is open, and $K$ is compact, there are only finitely many $N_x Z$ cosets in $K N_x Z$.  Hence $S$ is finite.  Hence $F$ is finite, and hence $\Span_k(\Delta) = \Span_k(F)$ is finite-dimensional.  
\end{proof}
  
\begin{thm}
\label{cusp-cpt}
Let $(\pi, V)$ be a smooth supercuspidal representation of $G$ with central character $\chi$.  Assume that $(\pi, V)$ is $\NN$-characteristic.  Then $(\pi, V)$ is $\NN$-compact.
\end{thm}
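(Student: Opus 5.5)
The plan is to fix $f \in V$ and show directly that only finitely many vertices $y$ satisfy $f_y = \pr_y(f) \neq 0$. The edges of the support then take care of themselves. If $e = \edge{xy}$, then $f_e = \pr_e f = \pr_y \pr_x f$, so $f_e \neq 0$ forces $f_x \neq 0$. Since $X$ is locally finite, finitely many vertices in the support give finitely many edges in the support.

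By the edge-transitivity assumption, $G$ has at most two orbits on $X^0$, so it suffices to fix one vertex $x$ and show that $\{ gx : g \in G,\ f_{gx} \neq 0 \}$ is finite. From the identity recorded just before Lemma \ref{findim-lemma}, namely $\pi(g)^{-1} f_{gx} = \pr_x \pi(g)^{-1} f = \delta(g)$, we get $f_{gx} \neq 0$ if and only if $\delta(g) \neq 0$.

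Lemma \ref{findim-lemma} says the span $W \subset V^{N_x}$ of $\Delta = \{\delta(g)\}$ is finite-dimensional. Choose a basis $w_1, \dots, w_m$ of $W$ and dual functionals $\mu_1, \dots, \mu_m$ on $W$. Extend each $\mu_i$ to $V^{N_x}$ by choosing a complement, and set $\lambda_i = \mu_i \circ \pr_x$.

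Each $\lambda_i$ is smooth. For $n \in N_x$ we have $\pi(n)v - v \in V[N_x] = \ker \pr_x$, so $\lambda_i$ is $N_x$-invariant, and $N_x$ is open. Hence $\lambda_i \in \tilde V$. Moreover $m_{\lambda_i, f}(g) = \lambda_i(\pi(g)^{-1} f) = \mu_i(\delta(g))$. Therefore $\delta(g) \neq 0$ if and only if $g$ lies in the union of the supports of the finitely many matrix coefficients $m_{\lambda_i, f}$. By supercuspidality, this union is contained in $CZ$ for some compact $C \subset G$.

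Since $Z$ acts trivially on $X$, the vertices in question lie in $Cx$. The stabilizer $G_x$ contains the open subgroup $N_x$, so the compact set $C$ meets only finitely many cosets $gG_x$. Hence $Cx$ is finite, and doing this for each of the at most two orbit representatives finishes the argument.

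The main point needing care is the passage from finite dimensionality to finitely many matrix coefficients. A single functional would not detect every nonzero $\delta(g)$, which is exactly why Lemma \ref{findim-lemma} is needed. The other delicate point is checking that functionals of the form $\mu \circ \pr_x$ are genuinely smooth, so that supercuspidality applies to them. Both are handled as above.
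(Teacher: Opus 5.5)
Your proposal is correct and follows essentially the same route as the paper: finite-dimensionality from Lemma \ref{findim-lemma}, a finite dual basis pulled back via $\pr_x$ to smooth functionals, supercuspidality of the resulting finitely many matrix coefficients, and finiteness of the image of a compact set in $G/G_x$ (with $G_x$ open) for each of the at most two vertex orbits. Your explicit treatment of edges (via $f_e = \pr_y \pr_x f$ and local finiteness) and of the smoothness of $\mu_i \circ \pr_x$ only spells out details that the paper leaves implicit.
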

\begin{proof}
Suppose that $f \in V$, and identify $f$ with its image in $H_c^0(X, \sheaf{S})$.  Thus $f_x = \pr_x(f)$ for all $x \in X^0$.  Recall that $G$ acts transitively on the edges of $X$, and let $e = \vec {xy}$ be an edge.  Thus every vertex is in the $G$-orbit of $x$ or in the orbit of $y$.  Define subspaces of $\sheaf{S}_x = V^{N_x}$ and $\sheaf{S}_y = V^{N_y}$,
$$W_x = \Span_k \{ \pr_x \pi(g)^{-1} f : g \in G \} \text{ and } W_y = \Span_k \{ \pr_y \pi(g)^{-1} f : g \in G \}.$$
These are finite-dimensional, by Lemma \ref{findim-lemma}.  

Let $\Lambda_x$ and $\Lambda_y$ be bases of $\Hom_k(W_x, k)$ and $\Hom_k(W_y, k)$, respectively.  If $\lambda \in \Lambda_x$, extend $\lambda$ to $V^{N_x}$ arbitrarily, and then extend by zero to $V = V^{N_x} \oplus V[N_x]$.  Similarly, extend all functionals $\lambda \in \Lambda_y$ to $V$.  

For all $\lambda \in \Lambda_x$, we have $m_{\lambda, f}(g) = \lambda( \pr_x \pi(g)^{-1} f) = \lambda(\pi(g)^{-1} f_{gx})$.  Hence  for all $g \in G$, we have
$$f_{gx} = 0 \text{ iff } m_{\lambda, f}(g) = 0 \text{ for all } \lambda \in \Lambda_x.$$
In this way we find that
$$f_{gx} \neq 0 \text{ implies } g \in \Supp(m_{\lambda, f}) \text{ for some } \lambda \in \Lambda_x.$$
And similarly,
$$f_{gy} \neq 0 \text{ implies } g \in \Supp(m_{\lambda, f}) \text{ for some } \lambda \in \Lambda_y.$$
As $\Lambda_x$ and $\Lambda_y$ are finite, we find that
$$\bigcup_{\lambda \in \Lambda_x} \Supp(m_{\lambda, f}) \text{ and } \bigcup_{\lambda \in \Lambda_y} \Supp(m_{\lambda, f}) \text{ are compact modulo } Z.$$

Thus there exists a compact subset $K \subset G$ such that $f_p \neq 0$ implies $p \in K Z \cdot x$ or $p \in K Z \cdot y$.  Since the stabilizers of $x$ and $y$ are open subgroups of $G$ containing $Z$, it follows that $f_p \neq 0$ for finitely many points in the orbit of $x$ and finitely many points in the orbit of $y$.  Hence $\Supp_\NN(f)$ is compact.
\end{proof}

The other striking consequence of Lemma \ref{findim-lemma} is the following central result.
\begin{thm}
Let $(\pi, V)$ be a smooth irreducible supercuspidal representation of $G$ with central character $\chi$.  Assume that $(\pi, V)$ is $\NN$-characteristic.  Then $V^{N_x}$ is finite-dimensional for every $x$, i.e., the sheaf $\sheaf{S}$ has finite-dimensional stalks.
\end{thm}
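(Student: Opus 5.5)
Fix a vertex $x \in X^0$. We may assume $V \neq 0$, so pick any nonzero $f \in V$. Following Bernstein's argument, we will show that $V^{N_x}$ is contained in the finite-dimensional space
$$W_x = \Span_k \{ \pr_x \pi(g)^{-1} f : g \in G \}.$$
This $W_x$ is finite-dimensional by Lemma \ref{findim-lemma}, so the theorem follows.

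The key step uses irreducibility. Since $(\pi, V)$ is irreducible and $f \neq 0$, the vectors $\{ \pi(g)^{-1} f : g \in G \}$ span $V$. Every $v \in V$ is therefore a finite linear combination $v = \sum_i c_i \pi(g_i)^{-1} f$. Applying the linear map $\pr_x$ gives
$$\pr_x(v) = \sum_i c_i \pr_x \pi(g_i)^{-1} f \in W_x.$$
Because $V$ is $N_x$-characteristic, $\pr_x$ is the identity on $V^{N_x}$. Hence every $v \in V^{N_x}$ satisfies $v = \pr_x(v) \in W_x$, so $V^{N_x} \subset W_x$ and $\dim V^{N_x} < \infty$.

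The edge stalks follow at once. For an edge $e = \edge{xy}$ we have $V^{N_e} = V^{N_x} \cap V^{N_y}$, which is finite-dimensional. Equivalently, $\sheaf{S}_e \isom V_{N_e}$ is a quotient of $\sheaf{S}_x$. Under the canonical identifications $\sheaf{S}_x \ident V^{N_x}$ and $\sheaf{S}_e \ident V^{N_e}$, all stalks of $\sheaf{S}$ are finite-dimensional.

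There is no serious obstacle here, since Lemma \ref{findim-lemma} carries the analytic content. The only points needing care are that irreducibility is used in the form "any nonzero vector generates $V$" (we need no smoothness of the span), and that $\pr_x$ is linear and restricts to the identity on $V^{N_x}$. Both hold because $(\pi, V)$ is $\NN$-characteristic.
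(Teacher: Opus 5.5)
Your proposal is correct and follows essentially the same route as the paper: irreducibility gives $\pr_x(V) = \Span_k\{\pr_x \pi(g)^{-1} f : g \in G\}$, which equals $V^{N_x}$ and is finite-dimensional by Lemma \ref{findim-lemma}. Your extra remark on the edge stalks, via $V^{N_e} = V^{N_x} \cap V^{N_y}$, is a harmless completion that the paper leaves implicit.
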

\begin{proof}
Let $f$ be any nonzero vector in $V$.  By irreducibility, we have $V = \Span_k \{ \pi(g)^{-1} f : g \in G \}$.  Therefore, we find that
$$\Span_k \{ \pr_x \pi(g)^{-1} f : g \in G \} = \pr_x(V) = V^{N_x}.$$
Lemma \ref{findim-lemma} completes the proof.
\end{proof}

\subsection{The localization theorem}

\begin{lemma}
\label{xez-lem}
Suppose that $(\pi, V)$ is $\NN$-characteristic.  Let $x$ and $z$ be distinct vertices in $X$, and let $e = \edge{xy}$ be the first edge in the unique simple path from $x$ to $z$.  Then
$$\pr_z \pr_x = \pr_z \pr_y \pr_x = \pr_z \pr_e.$$
\end{lemma}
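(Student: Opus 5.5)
The second equality is immediate from the earlier proposition, which gives $\pr_e = \pr_y \pr_x$ for $e = \edge{xy}$. So the content of the lemma is the first equality $\pr_z \pr_x = \pr_z \pr_y \pr_x$. The plan is to reduce it to a statement about the kernel of $\pr_z$:
$$\pr_z\bigl(\pr_x v - \pr_y \pr_x v\bigr) = 0 \quad \text{for all } v \in V.$$
Since $V$ is $N_y$-characteristic, the vector $w = \pr_x v - \pr_y\pr_x v$ lies in $V[N_y]$. It therefore suffices to show that $\pr_z$ kills a suitable part of $V[N_y]$ containing $w$, namely $V[N_y] \cap V^{N_x}$, using convexity.

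Since $y \in [x,z]$, convexity gives $N_y \subset N_x N_z$. Note also that $w \in V^{N_x}$, because $\pr_x v \in V^{N_x}$ and $\pr_y$ preserves $V^{N_x}$. Indeed, $N_x \subset G_e$ normalizes $N_y$, so $\pr_y$ commutes with $\pi(n)$ for $n \in N_x$; this is the argument already used in the proof of the edge proposition. Now $w$ is a sum of terms $\pi(n)u - u$ with $n \in N_y$, but the $u$ need not be $N_x$-fixed, so I will not work with the generators of $V[N_y]$ directly. Instead I will use the decomposition $V = V^{N_y} \oplus V[N_y]$ and argue with invariants.

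The key claim is that $\pr_z$ restricted to $V^{N_x}$ factors through $\pr_y$, that is,
$$\pr_z u = \pr_z \pr_y u \quad \text{for all } u \in V^{N_x}.$$
To prove it, I will show that $\pr_z$ vanishes on $V^{N_x} \cap V[N_y]$. Take $u$ in this intersection. I will compute $\pr_y \pr_z u$ in two ways.
\begin{itemize}
\item Since $N_y \subset N_x N_z$ and $u$ is $N_x$-fixed, I expect $\pr_y \pr_z u = \pr_z u$ whenever $\pr_z u$ is also $N_x$-fixed. So I first need $\pr_z u \in V^{N_x}$, which would follow if $N_x$ normalized $N_z$. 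That holds only for adjacent vertices, so this is the main obstacle.
\end{itemize}
To get around it, I will induct on $\dist(x,z)$. Let $z'$ be the neighbor of $z$ on $[x,z]$, and let $e' = \edge{z'z}$. Then I expect
$$\pr_z = \pr_z \pr_{z'} \text{ on } V^{N_x}.$$
The reason is that $N_{z'} \subset N_x N_z$, so on $V^{N_x}$ the group $N_{z'}$ acts only through $N_z$, and $N_{z'}$ normalizes $N_z$. This lets me replace $x$ by the vertex adjacent to the target at each step.

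For the base case $\dist(x,z) = 1$, we have $y = z$ and the identity is trivial. For the inductive step I will combine two identities:
\begin{itemize}
\item $\pr_{z'}\pr_x = \pr_{z'}\pr_y\pr_x$, which is the induction hypothesis applied to the pair $(x,z')$, whose path has the same first edge $e$;
\item $\pr_z = \pr_z\pr_{z'}$ on $V^{N_x} \cup V^{N_y}$, proved via the adjacent-vertex normalization and convexity.
\end{itemize}
Then
$$\pr_z\pr_x = \pr_z\pr_{z'}\pr_x = \pr_z\pr_{z'}\pr_y\pr_x = \pr_z\pr_y\pr_x.$$

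The main obstacle is the second identity. I would try to prove it by showing that for $u \in V^{N_x}$ the difference $u - \pr_{z'}u$ lies in $V[N_z]$. The argument would write $u - \pr_{z'}u \in V[N_{z'}]$, use $N_{z'} \subset N_x N_z$ together with the $N_x$-invariance of $u$ (and of $\pr_{z'}u$, which needs checking) to rewrite each $\pi(n)u' - u'$ as an element of $V[N_z]$, and take care with the non-normality by working with $N_{e'} = N_{z'}N_z$.
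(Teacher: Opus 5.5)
Your reduction (showing that $\pr_z$ kills $V^{N_x}\cap V[N_y]$) matches the paper's, and your induction is logically sound. The base case is trivial, and the chain $\pr_z\pr_x=\pr_z\pr_{z'}\pr_x=\pr_z\pr_{z'}\pr_y\pr_x=\pr_z\pr_y\pr_x$ is valid once you know $\pr_z u=\pr_z\pr_{z'}u$ for $u\in V^{N_x}$ and for $u\in V^{N_y}$. The gap is that this ``second identity'' carries the whole content of the lemma; for $\dist(x,z)=2$ one has $z'=y$ and it is literally the lemma. You leave it unproved, and the method you sketch for it would not work. Writing $u-\pr_{z'}u\in V[N_{z'}]$ as a sum of terms $\pi(n)u'-u'$ with $n\in N_{z'}$ produces vectors $u'$ that are not $N_x$-fixed. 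This is exactly the obstruction that made you abandon the direct approach. Here it is worse: once $\dist(x,z)\ge 3$, the vertices $x$ and $z'$ are not adjacent, so $N_x$ and $N_{z'}$ need not normalize each other. Then $\pr_x$ need not commute with $\pi(n)$ for $n\in N_{z'}$, and there is no reason for $\pr_{z'}u$ to be $N_x$-invariant. Passing to $N_{e'}=N_{z'}N_z$ does not by itself change this.

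The identity does hold, and your heuristic already has the right ingredients if you work on the $z$-side instead of with $u-\pr_{z'}u$. For $c\in N_{z'}$, write $c=mm'$ with $m\in N_z$ and $m'\in N_x$ (invert $N_{z'}\subset N_xN_z$). Since $N_{z'}\subset G_z$ normalizes $N_z$, $\pi(c)$ commutes with $\pr_z$, so for $u\in V^{N_x}$ you get $\pi(c)\pr_z u=\pr_z\pi(m)u=\pr_z u$. Hence $\pr_z u\in V^{N_{z'}}$, and since $\pr_z\pr_{z'}=\pr_{z'}\pr_z$ for adjacent vertices, $\pr_z\pr_{z'}u=\pr_z u$. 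The same argument with $y$ in place of $x$ (using $z'\in[y,z]$) closes your induction.

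The paper needs no induction, because it carries out the direct route you gave up on, working on the $x$-side. For $v\in V^{N_x}$ it writes $v=\pr_y v+s_y$ with $s_y\in V^{N_x}\cap V[N_y]$, and shows $s_y\in V[N_z]$ from $N_y\subset N_zN_x$ and $\pr_x s_y=s_y$. What resolves your worry about generators is that $N_y\subset G_x$ normalizes $N_x$, so $\pr_x$ commutes with $\pi(n)$ for $n\in N_y$. Applying $\pr_x$ to $s_y=\sum_i(\pi(n_i)u_i-u_i)$ replaces each $u_i$ by the $N_x$-fixed vector $\pr_x u_i$. After that, each term $\pi(n_i)\pr_x u_i-\pr_x u_i$ visibly lies in $V[N_z]$.
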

\begin{proof}
Since $\pr_y \pr_x = \pr_e$, it suffices to begin with $v \in V^{N_x}$ and prove that $\pr_z(v) = \pr_z \pr_y(v)$.  Begin by noting that there exists a unique $s_y \in V[N_y]$ for which
\begin{equation}
\label{v-pry}
v = \pr_y(v) + s_y.
\end{equation}
Since $V \in V^{N_x}$, and $N_x$ normalizes $N_y$, we find that $\pr_y(v) \in V^{N_x}$ too.  Hence $s_y \in V^{N_x}$.  Since $s_y \in V[N_y]$ and $N_y \subset N_z N_x$, we find that $s_y \in V[N_z] + V[N_x]$.  But then we find $\pr_x(s_y) = s_y$, and so $s_y \in V[N_z]$.  

Taking $\pr_z$ of both sides of Equation \ref{v-pry}, we now have
$$\pr_z(v) = \pr_z \pr_y(v) + \pr_z(s_y) = \pr_z \pr_y(v).$$
That completes the proof.
\end{proof}

The proof of the following Theorem \ref{localization} is close in spirit to Meyer-Solleveld, who use a similar alternating sum for a support projection in \cite[Theorem 2.12]{MS}.  But we are primarily interested in the {\em cohomological} resolution of $\NN$-compact (e.g., supercuspidal) representations, while Meyer and Solleveld \cite{MS}, and Schneider and Stuhler \cite{SS}, and Vigneras \cite{Vigneras} treat the homological resolution as central.  Those authors prove a result akin to Theorem \ref{localization} by an intricate argument which begins with a homological resolution via cosheaves, and extends to the compactified building, and applies a duality theorem.  Our proof is direct, and cohomological throughout.
\begin{thm}
\label{localization}
Let $\NN$ be a localizing family of subgroups of $G$.  Let $(\pi, V)$ be an $\NN$-compact, $\NN$-cogenerated, and $\NN$-characteristic representation of $G$.  Then localization gives an isomorphism of representations of $G$,
$$\loc \From (\pi, V) \xrightarrow{\sim} H_c^0(X, \sheaf{S}).$$
\end{thm}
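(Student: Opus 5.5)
Injectivity is already Proposition \ref{inj-loc}, since $(\pi,V)$ is $\NN$-compact and $\NN$-cogenerated. So the work is surjectivity: given a compactly supported global section $s = (s_x) \in H_c^0(X, \sheaf{S})$, with $s_x \in V^{N_x}$ and $\pr_e s_x = \pr_e s_y$ for every edge $e = \edge{xy}$, we must produce $f \in V$ with $\pr_z f = s_z$ for every vertex $z$.

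Following the alternating-sum idea attributed to Meyer--Solleveld, the plan is to take the ``Euler characteristic'' candidate
$$f = \sum_{x \in X^0} s_x - \sum_{e \in X^1} s_e,$$
where $s_e \in V^{N_e}$ denotes the common value $\pr_e s_x = \pr_e s_y$. Both sums are finite because $\Supp(s)$ is finite. Equivalently, choose a finite subtree $T$ containing the support and sum over the vertices and edges of $T$. The extra terms vanish, and $T$ has one more vertex than edges.

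To check $\pr_z f = s_z$, fix $z$ and orient every edge of $T$ toward $z$, by which we mean toward the vertex nearer to $z$ (if $z\notin T$, enlarge $T$ to contain $z$; the extra terms are zero). Each vertex $x \neq z$ of $T$ is then matched bijectively with the first edge $e_x = \edge{xy}$ on the path $[x,z]$. Lemma \ref{xez-lem} gives $\pr_z \pr_x = \pr_z \pr_{e_x}$, so
$$\pr_z s_x = \pr_z \pr_x s_x = \pr_z \pr_{e_x} s_x = \pr_z s_{e_x}.$$
Hence the term for $x$ cancels the term for $e_x$ after applying $\pr_z$. The only surviving term is $\pr_z s_z = s_z$. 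Thus $\loc(f) = s$, which proves surjectivity. $G$-equivariance of $\loc$ was already noted, so $\loc$ is an isomorphism of representations.

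The main obstacle is bookkeeping rather than any deep input. One must confirm that the bijection between $T^0 \setminus \{z\}$ and $T^1$ is valid, which holds because $T$ is a finite tree. One must also confirm that Lemma \ref{xez-lem} applies with the correct direction, namely that $x$ is the starting vertex and $e_x$ is the first edge toward $z$. Finally, one must note that $s_e$ lies in $V^{N_e}$, so that $\pr_z s_e$ makes sense inside the single space $V$. Everything else has been arranged by Lemma \ref{xez-lem} and the $\NN$-characteristic hypothesis.
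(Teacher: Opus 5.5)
Your proposal is correct and is essentially the paper's proof: injectivity from Proposition~\ref{inj-loc}, then surjectivity via the alternating sum $\sum_x s_x - \sum_e s_e$, with Lemma~\ref{xez-lem} used after applying $\pr_z$ to cancel each vertex term against the first edge on its path toward $z$. Your finite-subtree bookkeeping only makes explicit the vertex--edge pairing that the paper states in one line.
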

\begin{proof}
Since $(\pi, V)$ is $\NN$-cogenerated and $\NN$-compact, Proposition \ref{inj-loc} shows that localization is an {\em injective} $G$-intertwining map
$$\loc \From V \Into H_c^0(X, \sheaf{S}).$$

For surjectivity, begin with a compactly supported global section $f \in H_c^0(X, \sheaf{S})$.  Since $(\pi, V)$ is $\NN$-characteristic, we identify coinvariants with invariants, so 
$$f_x \in \sheaf{S}_x \ident V^{N_x} \text{ and } f_e \in \sheaf{S}_e \ident V^{N_e},$$ 
for all vertices and edges.

Define a vector $v \in V$ by the formula
$$v = \sum_{x \in X^0} f_x - \sum_{e \in X^1} f_e.$$
The compact support of $f$ guarantees the finiteness of the sums above.

We claim that $\loc(v) = f$.  Indeed, at any vertex $z$, we have
\begin{equation}
\label{L_Cancel}
\loc(v)_z = \sum_{x \in X^0}  \pr_z f_x - \sum_{e \in X^1} \pr_z f_e.
\end{equation}
If $x \neq z$, then let $e$ be the first edge in the unique path from $x$ to $z$.  (Here we use the fact that $X$ is a tree!)  Then by Lemma \ref{xez-lem}, we have
\begin{equation}
\label{proj_xz}
\pr_z f_x = \pr_z \pr_x f_x = \pr_z \pr_e f_x = \pr_z f_e.
\end{equation}

Hence each term $\pr_z f_x$ in \eqref{L_Cancel} cancels a corresponding term $\pr_z f_e$, when we pair off each vertex $x$ with its adjacent edge $e$ pointing towards $z$.  The only unpaired term is $z$ itself.  Thus we find
$$\loc(v)_z = \pr_z f_z = f_z \text{ for all } z \in X^0.$$
Hence $\loc(v) = f$, completing the proof that $\loc$ is surjective.   
\end{proof}

\section{Categories of sheaves}

Let $X$ be a locally finite tree as before.  Let $\Cat{Sh}^{\fin}(X)$ be the full subcategory of $\Cat{Sh}(X)$ consisting of sheaves whose stalks are all finite-dimensional.  Similarly, write $\Cat{Sh}_G^{\fin}(X)$ for the category of $G$-equivariant sheaves whose stalks are finite-dimensional.  Our goal in this section is to understand basic properties of these abelian categories and their injective objects.

\subsection{Decomposition}

We can view both categories $\Cat{Sh}^{\fin}(X)$ and $\Cat{Sh}_G^{\fin}(X)$ as categories of presheaves (i.e., functor categories) in the following way:  Define $\Cat{Q}$ to be the small category with object set $X^0 \sqcup X^1$, and morphisms $\lambda_{x,e} \From x \To e$ for every instance that $x < e$ (and identity morphisms of course).  In other words, $\Cat{Q}$ is the category associated to the poset $(X, \leq)$.  Then a sheaf on $X$ is precisely the same as a functor from $\Cat{Q}$ to $\Cat{Vec}$.  The category $\Cat{Sh}(X)$ is isomorphic to the category $[\Cat{Q},\Cat{Vec}]$ of functors from $\Cat{Q}$ to the category of vector spaces.  This is also known as the category of {\em presheaves (of vector spaces) on $\Cat{Q}$}.  Similarly, $\Cat{Sh}^{\fin}(X)$ is isomorphic to $[\Cat{Q}, \Cat{Vec}^{\fin}]$.     

In the equivariant setting, we can build a category which incorporates the poset structure of $X$ and the action of $G$.  Let $\Cat{Q}_G$ be the small category with the same object set $X^0 \sqcup X^1$ as before.  The morphisms in $\Cat{Q}_G$ belong to three families.  
\begin{itemize}
\item
As before, we include morphisms $\lambda_{x,e} \From x \To e$ whenever $x < e$. 
\item
We capture the group action by including morphisms $\alpha_{g,x} \From x \To gx$ and $\alpha_{g,e} \From e \To ge$ whenever $x \in X^0$, $e \in X^1$, and $g \in G$.  When $g = \Id$, we identify $\alpha_{\Id, x} = \Id_x$ and $\alpha_{\Id, e}$ with $\Id_e$. 
\item
We include morphisms that mix poset structure and group action:  these are morphisms $\beta_{g,x,e} \From x \To ge$, whenever $x < e$ and $g \in G$.  When $g = \Id$, we identify $\beta_{\Id, x, e}$ with $\lambda_{x,e}$.    
\end{itemize}
Compositions are almost self-evident.  They are determined by the following rules.
\begin{itemize}
\item
If $g,h \in G$, and $x$ is a vertex, then $\alpha_{h, gx} \circ \alpha_{g,x} = \alpha_{hg,x}$.  Similarly, if $e$ is an edge, then $\alpha_{h, ge} \circ \alpha_{g,e} = \alpha_{hg, e}$.  
\item
If $g \in G$, and $x < e$, then we identify
$$\beta_{g,x,e} = \lambda_{gx, ge} \circ \alpha_{g,x} = \alpha_{g,e} \circ \lambda_{x,e}.$$
\end{itemize}

This category is cooked up so that the following holds.
\begin{proposition}
The category $\Cat{Sh}_G(X)$ is isomorphic to the category $[\Cat{Q}_G, \Cat{Vec}]$ of functors from $\Cat{Q}_G$ to $\Cat{Vec}$, i.e., the category of presheaves on $\Cat{Q}_G$.  Similarly, $\Cat{Sh}_G^{\fin}(X)$ is isomorphic to $[\Cat{Q}_G, \Cat{Vec}^{\fin}]$. 
\end{proposition}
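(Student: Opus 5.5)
We will write down mutually inverse functors $\Phi \From \Cat{Sh}_G(X) \To [\Cat{Q}_G, \Cat{Vec}]$ and $\Psi \From [\Cat{Q}_G, \Cat{Vec}] \To \Cat{Sh}_G(X)$, and check that each is well defined. Before doing so we make the morphisms of $\Cat{Q}_G$ explicit. From the composition rules, every morphism of $\Cat{Q}_G$ is uniquely of one of the forms $\alpha_{g,x}$, $\alpha_{g,e}$, or $\beta_{g,x,e}$, where $\lambda_{x,e} = \beta_{\Id,x,e}$. The composites involving $\beta$ are then forced:
\begin{itemize}
\item $\alpha_{h,ge} \circ \beta_{g,x,e} = \beta_{hg,x,e}$;
\item $\beta_{h,gx,ge} \circ \alpha_{g,x} = \beta_{hg,x,e}$.
\end{itemize}
The second uses that $gx < ge$ whenever $x < e$.

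\emph{The functor $\Phi$.} Given an equivariant sheaf $(\sheaf{S}, \rho)$, send each object $x$ or $e$ to the stalk $\sheaf{S}_x$ or $\sheaf{S}_e$. On morphisms, set
$$\alpha_{g,x} \mapsto \rho_{g,x}, \qquad \alpha_{g,e} \mapsto \rho_{g,e}, \qquad \beta_{g,x,e} \mapsto \Res_{gx,ge} \circ \rho_{g,x}.$$
By the first equivariance axiom, the image of $\beta_{g,x,e}$ also equals $\rho_{g,e} \circ \Res_{x,e}$, so the two descriptions of $\beta$ agree. Functoriality on composites of $\alpha$'s is exactly the cocycle axiom $\rho_{g,hx}\rho_{h,x} = \rho_{gh,x}$. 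The mixed composites above follow from the cocycle axiom together with compatibility of $\rho$ with restriction.

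One point needs care: identities must go to identities, i.e.\ $\rho_{\Id,x} = \Id$. The cocycle axiom only gives $\rho_{\Id,x}^2 = \rho_{\Id,x}$. But each $\rho_{g,x}$ is invertible, since the definition describes the structure as a family of isomorphisms $g^\ast \sheaf{S} \To \sheaf{S}$, so an invertible idempotent is the identity. Alternatively, we can read the convention $\alpha_{\Id,x} = \Id_x$ as forcing $\rho_{\Id,x} = \Id$. We expect this normalization to be the only genuine subtlety; it will be settled by noting that the equivariant structure maps are isomorphisms.

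\emph{The functor $\Psi$.} Given a functor $F \From \Cat{Q}_G \To \Cat{Vec}$, define stalks $\sheaf{S}_x = F(x)$ and $\sheaf{S}_e = F(e)$, restrictions $\Res_{x,e} = F(\lambda_{x,e})$, and structure maps $\rho_{g,x} = F(\alpha_{g,x})$ and $\rho_{g,e} = F(\alpha_{g,e})$. The cocycle axiom follows from functoriality on $\alpha$-composites. Compatibility with restriction follows by applying $F$ to the defining identity
$$\lambda_{gx,ge} \circ \alpha_{g,x} = \alpha_{g,e} \circ \lambda_{x,e}.$$

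\emph{Inverse, and morphisms.} $\Psi\Phi$ is the identity by construction. $\Phi\Psi$ is the identity because $F(\beta_{g,x,e})$ is determined by the decomposition $\beta_{g,x,e} = \lambda_{gx,ge} \circ \alpha_{g,x}$. Morphisms on both sides are families of linear maps indexed by $X^0 \sqcup X^1$. A natural transformation is exactly such a family commuting with the images of all $\lambda$ and $\alpha$ morphisms, because these generate $\Cat{Q}_G$. This is precisely the condition for a morphism of equivariant sheaves: commuting with restrictions and intertwining $\rho$. Hence $\Phi$ and $\Psi$ are mutually inverse isomorphisms of categories.

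Finally, the stalks are unchanged under $\Phi$ and $\Psi$, so the isomorphism restricts to one between $\Cat{Sh}^{\fin}_G(X)$ and $[\Cat{Q}_G, \Cat{Vec}^{\fin}]$.
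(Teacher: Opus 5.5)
Your proof is correct and is the verification the paper leaves implicit: the paper gives no argument beyond saying $\Cat{Q}_G$ is ``cooked up so that the following holds.'' In other words, $\Cat{Q}_G$ is generated by the $\lambda$'s and $\alpha$'s subject to relations mirroring the equivariance axioms, and your $\Phi$ and $\Psi$ just unwind that design.

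Your flag about $\rho_{\Id,x}=\Id$ is well placed. The zero family $\rho_{g,x}=0$, $\rho_{g,e}=0$ satisfies both bullet axioms of the paper's concrete definition, but it is not a functor on $\Cat{Q}_G$. So the normalization must be imported from the ``family of isomorphisms'' description, as you do. Your alternative reading via $\alpha_{\Id,x}=\Id_x$ only constrains functors on $\Cat{Q}_G$; it does not derive the normalization on the sheaf side.
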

 
 Framing our (equivariant) sheaves as presheaves in this way, we can apply a vast general literature.  For example, in \cite[Theorem 1.1]{BCB}, Botnan and Crawley-Boevey prove the following Atiyah-Krull-Schmidt type result.
\begin{thm}
Let $\Cat{M}$ be the category of functors from a small category to the category $\Cat{Vec}^{\fin}$.  Then every object of $\Cat{M}$ decomposes as a direct sum of indecomposable objects with local endomorphism ring.
\end{thm}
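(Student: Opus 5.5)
The plan is to follow the standard route of Botnan and Crawley-Boevey. First I construct a decomposition into indecomposables, then I show that each indecomposable has local endomorphism ring. Let $\Cat{C}$ be the small category and $M \From \Cat{C} \To \Cat{Vec}^{\fin}$ a functor. The guiding principle is that $M$ is pointwise finite-dimensional. So for each object $c$, the algebra $\mathrm{End}(M)$ acts on the finite-dimensional space $M(c)$ through a finite-dimensional quotient. This lets Fitting-lemma arguments be carried out objectwise and then glued.

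\textbf{Step 1: local endomorphism rings.} I would show first that if $M$ is indecomposable, then $E = \mathrm{End}(M)$ is local. Take $\phi \in E$. For each object $c$, the Fitting decomposition $M(c) = \Ker(\phi_c^n) \oplus \Im(\phi_c^n)$ holds for $n \gg 0$, with $n$ depending on $c$. The subspaces $\bigcup_n \Ker(\phi_c^n)$ and $\bigcap_n \Im(\phi_c^n)$ are functorial in $c$, because morphisms commute with $\phi$. So they define subfunctors $M_{\mathrm{nil}}$ and $M_{\mathrm{inv}}$ with $M = M_{\mathrm{nil}} \oplus M_{\mathrm{inv}}$. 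By indecomposability, $\phi$ is either invertible (stalkwise, hence globally) or locally nilpotent. It remains to check that the locally nilpotent endomorphisms form an ideal, which is equivalent to locality. For this, suppose $\phi + \psi$ is invertible with $\phi, \psi$ both non-invertible. Normalizing gives $\phi' + \psi' = 1$, and $\phi'$ and $\psi'$ commute. Evaluating at a nonzero stalk, $1 - \psi'$ is unipotent there and so cannot be nilpotent, a contradiction.

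\textbf{Step 2: existence of a decomposition.} This is the main obstacle, since $\Cat{C}$ may have infinitely many objects and naive induction on dimension fails. I would use Zorn's lemma in the form of Crawley-Boevey's argument. Consider families $\{M_i\}$ of indecomposable subfunctors whose sum is direct and is a direct summand of $M$, and order them by inclusion. The difficulty is that a union of a chain of such families need not span a direct summand.

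To handle this, I would instead use a compactness argument. For each $c$, the set of pairs of complementary subspaces of $M(c)$ is an algebraic variety, and the relevant constraints, namely idempotents $e \in \mathrm{End}(M)$, are determined stalkwise by finite-dimensional data. Following \cite{BCB}, I would consider decompositions of $M$ into direct summands, ordered by refinement. An inverse-limit argument over the finite-dimensional stalks, using that descending chains of subspaces of $M(c)$ stabilize (Mittag-Leffler), shows that every chain has a refinement bound. A maximal decomposition then exists. Its summands are indecomposable, since otherwise one could refine further.

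Combining Steps 1 and 2 gives the theorem. Since the statement is cited from \cite[Theorem 1.1]{BCB}, the paper may simply invoke it, and the above is only the structure of the proof I would write. Applied with $\Cat{C} = \Cat{Q}$ or $\Cat{Q}_G$, it decomposes $\Cat{Sh}^{\fin}(X)$ and $\Cat{Sh}^{\fin}_G(X)$.
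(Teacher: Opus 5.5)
The paper does not prove this statement. It quotes it from \cite[Theorem 1.1]{BCB} and applies it as a black box to $\Cat{Q}$ and $\Cat{Q}_G$. Your outline is therefore a self-contained substitute rather than a reconstruction of an argument in the text, and it is essentially sound.

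Step 1 is correct as written. The subfunctors $\bigcup_n \Ker(\phi_c^n)$ and $\bigcap_n \Im(\phi_c^n)$ split $M$, so for indecomposable $M$ every non-unit of $\mathrm{End}(M)$ is stalkwise nilpotent. Your $\phi' + \psi' = 1$ argument then shows the non-units are closed under addition, which is the standard criterion for locality.

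In Step 2 you can delete the inclusion-ordered families and the remark about varieties; the refinement version is the right one. However, the sentence appealing to ``an inverse-limit argument \ldots (Mittag-Leffler)'' does not yet show that a chain has an upper bound. The finiteness that does the work is a count of pieces, not stabilization of a single descending chain of subspaces.

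Here is the missing step spelled out.
\begin{itemize}
\item Use decompositions of $M$ into nonzero subfunctors. If $\mathcal{D}'$ refines $\mathcal{D}$, the number of members of $\mathcal{D}'$ that are nonzero at $c$ is at least the corresponding number for $\mathcal{D}$, and at most $\dim M(c)$.
\item If the two numbers agree, then $\mathcal{D}$ and $\mathcal{D}'$ have the same nonzero components at $c$. Hence along a chain $(\mathcal{D}_\alpha)$ the induced decomposition of $M(c)$ is constant for $\alpha \geq \alpha_c$.
\item Call a family $(N_\alpha)$ with $N_\alpha \in \mathcal{D}_\alpha$ and $N_\beta \subseteq N_\alpha$ for $\alpha \leq \beta$ a thread, and put $L = \bigcap_\alpha N_\alpha$, which is a subfunctor.
\item Each nonzero member of the stable decomposition of $M(c)$ equals $L(c)$ for exactly one thread, and $L(c) = 0$ for every other thread. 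Therefore $M = \bigoplus L$, taken over the threads with $L \neq 0$, is a common refinement of the chain.
\end{itemize}
With this inserted, Zorn's lemma gives a maximal decomposition. Its members are indecomposable, since otherwise it could be refined, and Step 1 makes their endomorphism rings local.
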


It follows that (equivariant) sheaves on $X$ admit such a decomposition.
\begin{corollary}
Every sheaf in $\Cat{Sh}^{\fin}(X)$ decomposes into a direct sum of indecomposable sheaves.  Moreover, every equivariant sheaf in $\Cat{Sh}_G^{\fin}(X)$ decomposes into a direct sum of indecomposable $G$-equivariant sheaves.
\end{corollary}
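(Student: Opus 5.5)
The corollary follows directly from the Botnan--Crawley-Boevey theorem once we transport it along the isomorphisms of categories established just above. We treat the non-equivariant and equivariant cases in parallel. In the non-equivariant case we use the isomorphism $\Cat{Sh}^{\fin}(X) \isom [\Cat{Q}, \Cat{Vec}^{\fin}]$, where $\Cat{Q}$ is the poset category of $(X, \leq)$. In the equivariant case we use the isomorphism $\Cat{Sh}_G^{\fin}(X) \isom [\Cat{Q}_G, \Cat{Vec}^{\fin}]$ from the preceding proposition. In each case the first step is to confirm that the indexing category is small. This holds because its object set is $X^0 \sqcup X^1$, and its morphisms are indexed by pairs $x<e$ together with elements of the group $G$, which is a set.

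Applying \cite[Theorem 1.1]{BCB} to $\Cat{M} = [\Cat{Q}, \Cat{Vec}^{\fin}]$ or $\Cat{M} = [\Cat{Q}_G, \Cat{Vec}^{\fin}]$, every object decomposes as a direct sum of indecomposable objects, each with local endomorphism ring. The key point is that an isomorphism of categories preserves direct sums (biproducts, or arbitrary coproducts in the infinite case) and indecomposability, since both are defined purely categorically. Direct sums of functors are computed pointwise, and under the dictionary these correspond to stalkwise direct sums of sheaves compatible with restriction maps. In the equivariant case they correspond in addition to direct sums compatible with the maps $\rho(g,\cdot)$. So an indecomposable functor on $\Cat{Q}_G$ corresponds exactly to a $G$-equivariant sheaf that admits no nontrivial decomposition into $G$-equivariant summands.

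The only point needing care is the meaning of ``direct sum'' when infinitely many summands occur, which will typically happen since $X$ is infinite. Here we should note that Botnan--Crawley-Boevey's decomposition is an arbitrary (possibly infinite) direct sum in the functor category, which is computed objectwise. The sheaf $\bigoplus_i \sheaf{S}^{(i)}$ then has stalk $\bigoplus_i \sheaf{S}^{(i)}_x$ at each $x$. These stalks are finite-dimensional, since they are summands of the given finite-dimensional stalk, so only finitely many summands are nonzero at any given $x$ or $e$. The direct sum therefore stays inside $\Cat{Sh}^{\fin}(X)$ (resp.\ $\Cat{Sh}_G^{\fin}(X)$), and no real obstacle remains. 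The statement is a formal translation.
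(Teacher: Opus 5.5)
Your proposal is correct and takes the same route as the paper: transport the Botnan--Crawley-Boevey decomposition theorem along the isomorphisms $\Cat{Sh}^{\fin}(X) \isom [\Cat{Q}, \Cat{Vec}^{\fin}]$ and $\Cat{Sh}_G^{\fin}(X) \isom [\Cat{Q}_G, \Cat{Vec}^{\fin}]$. Your remarks on the smallness of $\Cat{Q}_G$ and on infinite, pointwise-finite direct sums make explicit what the paper leaves implicit.
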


\subsection{Elementary injectives and adjoint functors}

Since $\Cat{Sh}(X)$ can be viewed as the category of presheaves $[\Cat{Q}, \Cat{Vec}]$ -- there is a general machinery which provides injective objects and resolutions.  We refer to the Stacks Project \cite[\href{https://stacks.math.columbia.edu/tag/01DJ}{Tag 01DJ}]{stacks-project}, and describe the main ideas here.  

Let $\Cat{Q}_\circ$ be the ``discrete'' category with object set $X^0 \sqcup X^1$ as before, but with only identity morphisms.  A functor in $[\Cat{Q}_\circ, \Cat{Vec}]$ consists of just the data of a vector space for every vertex and edge of the tree $X$.  We call such data a {\em discrete sheaf}, and let $\Cat{Sh}_\circ(X) = [\Cat{Q}_\circ, \Cat{Vec}]$ be the category of such discrete sheaves (and linear maps among them).  Every object of $\Cat{Sh}_\circ(X)$ is injective and projective.   

The inclusion of categories $\Cat{Q}_\circ \Into \Cat{Q}$ yields a restriction functor $\iota^\circ \From \Cat{Sh}(X) \To \Cat{Sh}_\circ(X)$.  Explicitly, this is the functor which takes a sheaf on $X$ and forgets all the $\Res_{x,e}$ maps.  This functor has a {\em right} adjoint, as described at \cite[\href{https://stacks.math.columbia.edu/tag/00XF}{Tag 00XF}]{stacks-project}, which we call $\iota_\circ$,
$$\iota_\circ \From \Cat{Sh}_\circ(X) \To \Cat{Sh}(X).$$
Exactness of $\iota^\circ$ implies that $\iota_\circ$ transforms injective objects to injective objects \cite[\href{https://stacks.math.columbia.edu/tag/015Z}{Tag 015Z}]{stacks-project}.  This is quite helpful, as it allows us to produce injective resolutions of sheaves from injective resolutions of discrete sheaves (a category where all objects are injective).
\begin{proposition}
Let $\sheaf{S}$ be a sheaf on $X$.  Then $\sheaf{S} \hookrightarrow \iota_\circ \iota^\circ \sheaf{S}$ is an embedding of $\sheaf{S}$ into an injective sheaf.\end{proposition}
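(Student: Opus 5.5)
The statement has two parts: the unit map $\sheaf{S} \To \iota_\circ \iota^\circ \sheaf{S}$ is injective, and its target is injective. The second part is formal. Every object of $\Cat{Sh}_\circ(X)$ is injective, and $\iota_\circ$ is right adjoint to the exact functor $\iota^\circ$, so $\iota_\circ$ preserves injectives \cite[\href{https://stacks.math.columbia.edu/tag/015Z}{Tag 015Z}]{stacks-project}. Hence $\iota_\circ \iota^\circ \sheaf{S}$ is injective, and the real content is the monomorphism claim.

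For that, I would first describe $\iota_\circ$ explicitly as a right Kan extension along $\Cat{Q}_\circ \Into \Cat{Q}$. For a discrete sheaf $D = (D_p)$, the stalk at an object $p$ is the product of $D_q$ over all morphisms $p \To q$ in $\Cat{Q}$. Concretely:
$$(\iota_\circ D)_x = D_x \times \prod_{e > x} D_e, \qquad (\iota_\circ D)_e = D_e.$$
The restriction $\Res_{x,e}$ is the projection onto the $D_e$ factor. Local finiteness keeps these products finite, although that is not needed.

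I would verify this adjunction directly. A sheaf map $\sheaf{S} \To \iota_\circ D$ commuting with restrictions is determined by its components $\sheaf{S}_x \To D_x$ and $\sheaf{S}_e \To D_e$. The components $\sheaf{S}_x \To D_e$ are forced to be $\phi_e \circ \Res_{x,e}$. This gives the bijection with $\Hom(\iota^\circ \sheaf{S}, D)$.

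Under this description the unit $\sheaf{S} \To \iota_\circ \iota^\circ \sheaf{S}$ is explicit. At a vertex it is
$$s \mapsto \bigl(s, (\Res_{x,e} s)_{e>x}\bigr),$$
and at an edge it is the identity. Both components are injective, the vertex one because its first coordinate is the identity. Since kernels in a functor category are computed objectwise, the unit is a monomorphism, and the statement follows.

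The only step needing care is identifying the right adjoint correctly: the product runs over morphisms out of $p$, because we are constructing a right adjoint of restriction. Once that is settled, everything is a direct check.
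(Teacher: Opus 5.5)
Your proof is correct and follows the paper's route. The paper gets injectivity of $\iota_\circ \iota^\circ \sheaf{S}$ the same way, from $\iota_\circ$ being right adjoint to the exact functor $\iota^\circ$ (Tag 015Z) and all discrete sheaves being injective. It leaves the monomorphism property of the unit implicit, relying on the Stacks references and on its explicit description of $\iota_\circ$ on skyscrapers, which agrees with your right Kan extension formula. Your explicit unit $s \mapsto \bigl(s, (\Res_{x,e} s)_{e>x}\bigr)$ supplies that step.
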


Thankfully, the functor $\iota_\circ$ can be described explicitly, via \cite[\href{https://stacks.math.columbia.edu/tag/00XF}{Tag 00XF}]{stacks-project}.  Since every discrete sheaf is a direct sum of discrete sheaves supported at a single vertex or edge, it suffices to describe $\iota_\circ$ for such discrete sheaves with isolated supports.
\begin{proposition}
Let $E$ be a $k$-vector space.  Let $x$ be a vertex and let $\constsheaf{E}_x^\circ$ be the discrete sheaf supported at $x$ with stalk $E$.  Then $\iota_\circ \constsheaf{E}_x^\circ = \constsheaf{E}_x$, the skyscraper sheaf supported at $x$ with stalk $E$.  

Similarly, let $e = \edge{xy}$ be an edge, and $\constsheaf{E}_e^\circ$ the discrete sheaf supported at $e$ with stalk $E$.  Then $\iota_\circ \constsheaf{E}_e^\circ = \constsheaf{E}_{\bar e}$.  This is the constant sheaf supported on the closure $\bar e = \{ e, x, y \}$ of the edge.
\end{proposition}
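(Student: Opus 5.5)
We compute the right adjoint $\iota_\circ$ directly from the adjunction isomorphism
$$\Hom_{\Cat{Sh}(X)}(\sheaf{T}, \iota_\circ \sheaf{D}) \isom \Hom_{\Cat{Sh}_\circ(X)}(\iota^\circ \sheaf{T}, \sheaf{D}),$$
or equivalently from the Kan extension formula of \cite[\href{https://stacks.math.columbia.edu/tag/00XF}{Tag 00XF}]{stacks-project}. For the right adjoint of restriction along $\Cat{Q}_\circ \Into \Cat{Q}$, that formula gives, at each object $c$ of $\Cat{Q}$,
$$(\iota_\circ \sheaf{D})(c) = \prod_{c \to d \text{ in } \Cat{Q}} \sheaf{D}(d).$$
The morphisms out of $c$ are the identity together with the maps $c \to e$ for edges $e > c$. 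For a morphism $\lambda_{x,e}$, the induced map $(\iota_\circ\sheaf{D})(x) \To (\iota_\circ \sheaf{D})(e)$ is the projection onto the factors indexed by morphisms out of $e$. Precomposition with $\lambda_{x,e}$ sends $\Id_e$ to $\lambda_{x,e}$.

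First take $\sheaf{D} = \constsheaf{E}_x^\circ$. For a vertex $z$, the only target of a morphism out of $z$ at which $\sheaf{D}$ is nonzero is $x$ itself, reached exactly when $z = x$ via the identity, since no edge maps to a vertex. So the stalk is $E$ at $x$ and $0$ at every other vertex. At an edge $e$, the only morphism out of $e$ is $\Id_e$, and $\sheaf{D}(e) = 0$, so the stalk is $0$. Hence $\iota_\circ \constsheaf{E}^\circ_x$ is the skyscraper sheaf $\constsheaf{E}_x$; all restriction maps are forced to vanish because their targets are $0$.

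Next take $\sheaf{D} = \constsheaf{E}_e^\circ$ with $e = \edge{xy}$. At $e$ the stalk is $\sheaf{D}(e) = E$. At a vertex $z$ we get $\prod_{e' > z} \sheaf{D}(e')$, which is $E$ exactly when $z \in \{x,y\}$ (via $\lambda_{x,e}$, respectively $\lambda_{y,e}$) and $0$ otherwise. Every other edge has stalk $0$. Under the formula above, the restriction maps $\Res_{x,e}$ and $\Res_{y,e}$ are the identity on $E$. This is exactly the constant sheaf $\constsheaf{E}_{\bar e}$ on $\{e,x,y\}$.

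To avoid depending on the precise form of the Kan extension formula, I will also check the adjunction directly in the edge case. A morphism of sheaves $\sheaf{T} \To \constsheaf{E}_{\bar e}$ consists of maps $a_x\From \sheaf{T}_x \To E$, $a_y\From \sheaf{T}_y \To E$ and $a_e\From \sheaf{T}_e \To E$ satisfying $a_x = a_e \circ \Res_{x,e}$ and $a_y = a_e \circ \Res_{y,e}$. These equations show it is determined by $a_e$ alone, and any $a_e$ yields such a morphism. Hence such morphisms correspond bijectively to linear maps $\sheaf{T}_e \To E$, which are precisely the morphisms $\iota^\circ \sheaf{T} \To \constsheaf{E}_e^\circ$. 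The vertex case is even simpler: a map into a skyscraper at $x$ is just a linear map $\sheaf{T}_x \To E$, and there is no compatibility condition because every edge stalk is $0$. Both bijections are natural in $\sheaf{T}$, so uniqueness of adjoints completes the proof.

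There is no real obstacle. The only point needing care is the variance: $\iota_\circ$ is a right adjoint, so it is a right Kan extension, and its value at $c$ is a product over morphisms out of $c$, not into $c$. Getting this backwards would produce the star instead of the closure of $e$.
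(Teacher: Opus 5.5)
Your proof is correct and follows the same route as the paper, which simply invokes the Stacks project's general description of the right adjoint. You carry out that product-over-outgoing-morphisms computation explicitly and back it up with a direct Yoneda-style check of the adjunction, which makes the argument self-contained. One small quibble with your closing aside: the left adjoint would give the constant sheaf on $\Star(x)$ in the vertex case and $E$ on the singleton $\{e\}$ in the edge case, so the remark mixes up the two cases, but it plays no role in the proof.
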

\begin{proof}
This follows directly from the general result of \cite[\href{https://stacks.math.columbia.edu/tag/01DJ}{Tag 01DJ}]{stacks-project}.
\end{proof}

\begin{corollary}
\label{indec-inj}
The sheaves $\constsheaf{k}_x$ and $\constsheaf{k}_{\bar e}$ are indecomposable injective objects in the category $\Cat{Sh}(X)$.  Every indecomposable injective sheaf on $X$ is isomorphic to such a sheaf.
\end{corollary}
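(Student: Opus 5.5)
The plan is to deduce both assertions from the adjunction between $\iota^\circ$ and $\iota_\circ$ together with the explicit description of $\iota_\circ$ in the preceding proposition.

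\emph{Injectivity.} In $\Cat{Sh}_\circ(X)$ every object is injective, so $\constsheaf{k}_x^\circ$ and $\constsheaf{k}_e^\circ$ are injective. Since $\iota^\circ$ is exact, its right adjoint $\iota_\circ$ preserves injectives. The preceding proposition identifies $\iota_\circ\constsheaf{k}_x^\circ = \constsheaf{k}_x$ and $\iota_\circ\constsheaf{k}_e^\circ = \constsheaf{k}_{\bar e}$, so both sheaves are injective.

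\emph{Indecomposability.} I will show that both endomorphism rings equal $k$, which has no nontrivial idempotents.
\begin{itemize}
\item For $\constsheaf{k}_x$ this is immediate: there is a single one-dimensional stalk.
\item For $\constsheaf{k}_{\bar e}$, an endomorphism is a triple of scalars $(a_x,a_y,a_e)$ at $x$, $y$ and $e$. Because the restriction maps $\Res_{x,e}$ and $\Res_{y,e}$ are identities, commuting with them forces $a_x=a_e=a_y$.
\end{itemize}
Alternatively, adjunction gives $\Hom(\sheaf{S},\iota_\circ\constsheaf{k}_e^\circ)=\Hom(\sheaf{S}_e,k)$, and taking $\sheaf{S}=\constsheaf{k}_{\bar e}$ yields $\mathrm{End}=k$ directly.

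\emph{Classification.} Let $\sheaf{I}$ be an indecomposable injective sheaf, and choose a nonzero stalk. I expect this step to be the main obstacle, because stalks may be infinite-dimensional and $\Cat{Sh}(X)$ is not obviously locally noetherian. I would argue by embedding and splitting, in three steps.

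\begin{enumerate}
\item \emph{Find a simple subsheaf.} First, if $\sheaf{I}_e\ne0$ for some edge $e$, pick $0\ne s\in\sheaf{I}_e$; it spans a subsheaf $\sheaf{L}$ supported only at $e$, isomorphic to the skyscraper $\constsheaf{k}_e$ (edges are open points, so this is a subsheaf). Second, if every edge stalk vanishes, any $0\ne s\in\sheaf{I}_x$ spans a subsheaf $\sheaf{L}\cong\constsheaf{k}_x$. In either case we obtain a simple subsheaf $\sheaf{L}\subset\sheaf{I}$.
\item \emph{Embed into an indecomposable injective.} The map $\sheaf{L}\to\iota_\circ\iota^\circ\sheaf{L}$ is an injective hull. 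This sheaf is $\constsheaf{k}_{\bar e}$ or $\constsheaf{k}_x$ respectively; in the edge case use the hull $\constsheaf{k}_e\Into\constsheaf{k}_{\bar e}$.
\item \emph{Split off a summand.} By injectivity of $\sheaf{I}$, the inclusion $\sheaf{L}\Into\sheaf{I}$ extends to $\phi\From J\To\sheaf{I}$, where $J$ is the hull from the previous step. I must check that $\phi$ is injective, i.e.\ that $J$ is an essential extension of $\sheaf{L}$. For $J=\constsheaf{k}_{\bar e}$ this holds because any nonzero subsheaf has nonzero $e$-stalk: restrictions are isomorphisms, and subsheaves are closed under restriction. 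The analogous check for $\constsheaf{k}_x$ is trivial. Then $\phi(J)$ is an injective subobject of $\sheaf{I}$, hence a direct summand, so indecomposability gives $\sheaf{I}\cong J$.
\end{enumerate}

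The delicate points are verifying essentiality and handling the case distinction on whether some edge stalk is nonzero. Both are finite checks using only that restrictions in $J$ are identities.
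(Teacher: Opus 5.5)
Your proof is correct. For injectivity it matches the paper. For indecomposability the paper just calls it clear, and your computation $\mathrm{End}=k$ supplies the missing check.

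For the classification you take a genuinely different route.
\begin{itemize}
\item \emph{The paper's route} is top-down. It embeds all of $\sheaf{I}$ into $\iota_\circ\iota^\circ\sheaf{I}=\bigoplus_x \constsheaf{\sheaf{I}}_x\oplus\bigoplus_e\constsheaf{\sheaf{I}}_{\bar e}$ and splits this embedding by injectivity. It then needs an external Krull--Schmidt--Azumaya type result (Theorem 8 of Lee--Lee) to conclude that an indecomposable summand of this possibly infinite direct sum, with possibly infinite-dimensional stalks, is isomorphic to one of the summands. Only then does it deduce that the stalk is one-dimensional.
\item \emph{Your route} is bottom-up. You locate a simple subsheaf $\sheaf{L}$: a skyscraper at an edge, or at a vertex if all edge stalks vanish. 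You observe that $\constsheaf{k}_{\bar e}$ (resp.\ $\constsheaf{k}_x$) is an essential injective extension of $\sheaf{L}$, and extend $\sheaf{L}\Into\sheaf{I}$ across it. Essentiality then makes the extension a monomorphism, which splits because its source is injective.
\end{itemize}

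Your argument is self-contained. It needs no decomposition theory for infinite direct sums and does not care about the dimension of the stalks.

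It also proves slightly more, since indecomposability is used only in the last sentence. Every nonzero injective sheaf has a direct summand isomorphic to $\constsheaf{k}_x$ or $\constsheaf{k}_{\bar e}$. This is exactly the form in which the classification is used later, for instance in Proposition \ref{inj-summand}, where an injective summand is assumed to contain a copy of one of these sheaves.

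The paper's route is shorter given the citation. It also fits the Stacks Project picture of injectives as summands of $\iota_\circ$ applied to discrete sheaves. Here, though, your argument is the more elementary and transparent one.
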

\begin{proof}
These sheaves are injective, as they arise from $\iota_\circ \constsheaf{k}_x^\circ$ and $\iota_\circ \constsheaf{k}_e^\circ$, respectively.  It is clear that they are indecomposable, if one spends a moment trying to decompose them.  We prove that all indecomposable injective sheaves on $X$ arise in this way.  

So let $\sheaf{I}$ be an indecomposable injective sheaf.  Then there is an inclusion of injective sheaves,
$$0 \To \sheaf{I} \To \iota_\circ \iota^\circ \sheaf{I} = \bigoplus_{x \in X^0}  \constsheaf{\sheaf{I}}_x \oplus \bigoplus_{e \in X^1} \constsheaf{\sheaf{I}}_{\bar e}.$$
Here we write $\constsheaf{\sheaf{I}}_x$ for the skyscraper sheaf at $x$ with stalk $\sheaf{I}_x$.  And we write $\constsheaf{\sheaf{I}}_{\bar e}$ for the constant sheaf on $\bar e = \{ e, x,y \}$ with common stalk $\sheaf{I}_e$.  

Injectivity of $\sheaf{I}$ implies that $\sheaf{I}$ is a summand of the large direct sum; indecomposability of $\sheaf{I}$ implies that $\sheaf{I}$ is isomorphic to one of the summands.  This may be a bit subtle, since we make no finiteness assumptions on $\sheaf{I}$; but \cite[Theorem 8]{LeeLee} provides the necessary result, and we have
$$\sheaf{I} \isom \constsheaf{\sheaf{I}}_x \text{ or } \sheaf{I} \isom \constsheaf{\sheaf{I}}_{\bar e} \text{ for some vertex } x \text{ or edge } e.$$
Indecomposability now implies that the stalk $\sheaf{I}_x$ or $\sheaf{I}_e$ is one-dimensional, completing the proof.
\end{proof}

\subsection{Induction of sheaves}

To understand and construct equivariant sheaves, it is helpful to understand functors of restriction and induction.  Suppose that $H$ is a subgroup of $G$; then there is a forgetful functor $\res_H^G \From \Cat{Sh}_G(X) \To \Cat{Sh}_H(X)$.  In terms of presheaves, we have an inclusion of categories $\Cat{Q}_H \Into \Cat{Q}_G$, and thus a restriction of functors,
$$\res_H^G \From [\Cat{Q}_G, \Cat{Vec}] \To [\Cat{Q}_H, \Cat{Vec}].$$
This functor $\res_H^G$ has a left adjoint, which we call $\ind_H^G$, that is described in great generality at \cite[\href{https://stacks.math.columbia.edu/tag/00VC}{Tag 00VC}]{stacks-project}.  In our setting, we describe this {\em induction of equivariant sheaves}, beginning with an $H$-equivariant sheaf $\sheaf{T}$ on $X$.  For such a sheaf, the equivariant structure provides isomorphisms,
$$\rho(h,x) \From \sheaf{T}_x \To \sheaf{T}_{hx}$$
for all $h \in H$.  In this way, we may form the colimit
$$\sheaf{T}_{Hx} := \limdir_{h \in H} \sheaf{T}_{hx} = \bigoplus_{h \in H} T_{hx} / \langle t \sim \rho(h',hx) t \text{ for all } x \in X, h,h' \in H, t \in T_{hx} \rangle.$$
We say that $\sheaf{T}_{Hx}$ is the stalk of $\sheaf{T}$ on the $H$-orbit $Hx$.  By construction, $\sheaf{T}_{Hx}$ is canonically isomorphic to $\sheaf{T}_y$ for every $y \in Hx$.  The same procedure applies to construct $\sheaf{T}_{He}$ when $e$ is an edge.  

The sheaf $\sheaf{S} = \ind_H^G \sheaf{T}$ is now defined as follows.
\begin{enumerate}
\item
If $x \in X^0$ and $e \in X^1$, then
$$\sheaf{S}_x = \bigoplus_{\bar g \in G/H}  \sheaf{T}_{\bar g^{-1} x} \text{ and } \sheaf{S}_e = \bigoplus_{\bar g \in G/H}  \sheaf{T}_{\bar g^{-1} e}.$$
Here we note that if $\bar g = g H \in G/H$, then $\bar g^{-1} = H g^{-1} \in H \backslash G$, and $\bar g^{-1} x = H g^{-1} x$ is an $H$-orbit in $X$.
\item
If $x < e$, then the restriction maps $\sheaf{T}_{gx} \To \sheaf{T}_{ge}$ assemble to restriction maps $\sheaf{S}_x \To \sheaf{S}_e$.
\end{enumerate}

Note that the restriction maps operate separately on each summand in the definitions of $\sheaf{S}_x$ and $\sheaf{S}_e$; thus, in the non-equivariant category $\Cat{Sh}(X)$, we have an isomorphism of sheaves,
\begin{equation}
\label{ind-sheaf}
\sheaf{S} = \ind_H^G \sheaf{T}  \isom \bigoplus_{\bar g \in G/H} g^\ast \sheaf{T}.
\end{equation}
Here $g^\ast \sheaf{T}$ denotes the pullback with respect to the $G$-action on $X$, and each $g$ is a coset representative of $\bar g \in G/H$.

\begin{proposition}
The restriction functor $\res_H^G$ sends injective objects of $\Cat{Sh}_G(X)$ to injective objects of $\Cat{Sh}_H(X)$.  
\end{proposition}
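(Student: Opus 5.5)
The argument is the standard one: a functor with an exact left adjoint preserves injectives (\cite[\href{https://stacks.math.columbia.edu/tag/015Z}{Tag 015Z}]{stacks-project}, already used above for $\iota_\circ$). Here $\res_H^G$ is a \emph{right} adjoint of $\ind_H^G$. So it suffices to prove that $\ind_H^G \From \Cat{Sh}_H(X) \To \Cat{Sh}_G(X)$ is exact. Given that, if $\sheaf{I}$ is injective in $\Cat{Sh}_G(X)$ and $\sheaf{A} \Into \sheaf{B}$ is a monomorphism in $\Cat{Sh}_H(X)$, then $\ind_H^G \sheaf{A} \Into \ind_H^G \sheaf{B}$ is still a monomorphism. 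Injectivity of $\sheaf{I}$ makes $\Hom_G(\ind_H^G \sheaf{B}, \sheaf{I}) \To \Hom_G(\ind_H^G \sheaf{A}, \sheaf{I})$ surjective. Adjunction identifies this map with $\Hom_H(\sheaf{B}, \res_H^G \sheaf{I}) \To \Hom_H(\sheaf{A}, \res_H^G \sheaf{I})$, so $\res_H^G \sheaf{I}$ is injective.

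Before using the adjunction, I will check that the explicit sheaf above, with stalks $\sheaf{S}_x = \bigoplus_{\bar g \in G/H} \sheaf{T}_{\bar g^{-1}x}$, really is left adjoint to $\res_H^G$. The general left Kan extension of \cite[\href{https://stacks.math.columbia.edu/tag/00VC}{Tag 00VC}]{stacks-project} guarantees that a left adjoint exists, and in the presheaf language it is $\Cat{Q}_G \otimes_{\Cat{Q}_H} -$. Unwinding this at an object $x$ gives a colimit over $\alpha_{g,y}\From y \To x$ and $\beta$ morphisms. Since every morphism into a vertex is some $\alpha_{g,y}$, it reduces to $\bigoplus_{gH} \sheaf{T}_{Hg^{-1}x}$, matching the formula. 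The same unwinding works at edges, once one notes that $\beta$ morphisms factor through $\alpha$ and $\lambda$ morphisms.

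Exactness of $\ind_H^G$ is then checked stalkwise, since kernels and cokernels in $[\Cat{Q}_G, \Cat{Vec}]$ are computed objectwise. At a vertex $x$, $(\ind_H^G \sheaf{T})_x$ is a direct sum over $G/H$ of the orbit stalks $\sheaf{T}_{Hg^{-1}x}$. Each orbit stalk is canonically isomorphic to the actual stalk $\sheaf{T}_{g^{-1}x}$, because the colimit is taken over isomorphisms $\rho(h,\cdot)$ indexed by a connected groupoid. So $\sheaf{T} \mapsto \sheaf{T}_{Hg^{-1}x}$ is exact, being naturally isomorphic to evaluation at a point. Direct sums of vector spaces are exact, so $\ind_H^G$ is exact at vertices, and the edge case is identical.

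The main obstacle is this identification of the left adjoint, together with the claim that the orbit colimit is exact. The point to check is that the colimit over $H$ is taken in the category generated by the isomorphisms $\rho(h', hx)$. It is therefore a colimit over a connected groupoid rather than a coinvariant quotient $\sheaf{T}_x$ modulo the stabilizer $H_x$. I would verify this directly: the identifications $t \sim \rho(h',hx)t$ in the formula identify each summand $\sheaf{T}_{hx}$ isomorphically with $\sheaf{T}_x$. This relies on the cocycle relation $\rho_{g,hx}\rho_{h,x} = \rho_{gh,x}$, and in particular on the stabilizer $H_x$ acting through $\rho(h,x)$ on $\sheaf{T}_x$ compatibly with that relation. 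If that identification failed and we only obtained $H_x$-coinvariants, exactness would fail in bad characteristic. In that case I would instead verify injectivity of $\res_H^G \sheaf{I}$ directly, for $\sheaf{I}$ a summand of the cofree equivariant sheaf $\iota_\circ$ built from discrete $G$-equivariant data.
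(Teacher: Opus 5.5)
Your proposal is correct and is the paper's argument: $\res_H^G$ is right adjoint to $\ind_H^G$, and $\ind_H^G$ is exact by the explicit stalkwise description as a direct sum over $G/H$ of translated stalks of $\sheaf{T}$; your check of the Kan-extension formula just supplies detail the paper omits. One correction to your justification: the orbit colimit is isomorphic to $\sheaf{T}_x$, rather than to $H_x$-coinvariants, because it is indexed by elements $h \in H$, so there is exactly one transition isomorphism between any two indices (connectedness of the indexing groupoid alone would not suffice), and the fallback in your last paragraph is therefore not needed.
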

\begin{proof}
Restriction $\res_H^G$ is right-adjoint to induction $\ind_H^G$.  Thus it suffices to prove that induction is exact.  But this is clear from the description of induction above in \eqref{ind-sheaf}; exactness is preserved by the direct sum and pullback shown there.  
\end{proof}

\begin{corollary}
Every injective $G$-equivariant sheaf on $X$ (injective object of $\Cat{Sh}_G(X)$) is a $G$-equivariant injective sheaf on $X$ (an injective object of $\Cat{Sh}(X)$ endowed with $G$-equivariant structure).
\end{corollary}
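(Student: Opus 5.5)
This follows directly from the preceding proposition, applied with $H = \{1\}$ the trivial subgroup. The category $\Cat{Sh}_{\{1\}}(X)$ of $\{1\}$-equivariant sheaves is identified with $\Cat{Sh}(X)$. Indeed, an equivariant structure for the trivial group is the identity family $\rho(1,x) = \Id$ and $\rho(1,e) = \Id$, which is forced by the composition axiom. In terms of presheaves, $\Cat{Q}_{\{1\}} = \Cat{Q}$, since the $\alpha_{1,\cdot}$ are identities and $\beta_{1,x,e} = \lambda_{x,e}$. Under this identification, $\res_{\{1\}}^G \From \Cat{Sh}_G(X) \To \Cat{Sh}(X)$ is the functor that forgets the $G$-equivariant structure.

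Now let $\sheaf{S}$ be an injective object of $\Cat{Sh}_G(X)$. By the proposition, $\res_{\{1\}}^G \sheaf{S}$ is injective in $\Cat{Sh}_{\{1\}}(X) = \Cat{Sh}(X)$. So the underlying sheaf of $\sheaf{S}$ is injective in $\Cat{Sh}(X)$. It still carries its original $G$-equivariant structure, which is exactly the claim.

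The only point needing care is the identification of $\Cat{Sh}_{\{1\}}(X)$ with $\Cat{Sh}(X)$, which is the isomorphism of functor categories $[\Cat{Q}_{\{1\}},\Cat{Vec}] \isom [\Cat{Q},\Cat{Vec}]$. It is also worth noting why the proposition applies here. Its proof uses exactness of $\ind_{\{1\}}^G$, and by \eqref{ind-sheaf} this functor is $\sheaf{T} \mapsto \bigoplus_{g \in G} g^\ast \sheaf{T}$, which is visibly exact, so the trivial subgroup causes no difficulty. I expect no real obstacle beyond this bookkeeping.
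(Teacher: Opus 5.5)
Your proposal is correct and is exactly the paper's argument: the paper proves the corollary in one line by applying the preceding proposition with $H = \{1\}$. Your extra bookkeeping, identifying $\Cat{Q}_{\{1\}}$ with $\Cat{Q}$ and $\res_{\{1\}}^G$ with the forgetful functor, makes explicit what the paper leaves implicit. One small precision: the composition axiom alone only makes $\rho(1,x)$ idempotent, so it is the convention $\alpha_{\Id,x} = \Id_x$ in $\Cat{Q}_G$ (or invertibility of the equivariant structure) that forces $\rho(1,x) = \Id$; you do also cite that convention.
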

\begin{proof}
This follows from the above proposition, letting $H = \{ 1 \}$.
\end{proof}

\begin{proposition}
\label{inj-summand}
Let $\sheaf{S}$ be a $G$-equivariant sheaf on $X$.  Suppose that $\sheaf{S}$ has an injective summand, as a sheaf on $X$.  Then $\sheaf{S}$ has a $G$-equivariant injective summand.  In other words, we can write $\sheaf{S} = \sheaf{E} \oplus \sheaf{T}$ in the category $\Cat{Sh}(X)$, in such a way that $\sheaf{E}$ is a $G$-equivariant subsheaf of $\sheaf{S}$, and $\sheaf{E}$ is an injective object of $\Cat{Sh}(X)$.
\end{proposition}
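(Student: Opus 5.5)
We plan to take an indecomposable injective summand of $\sheaf{S}$ and spread it around by $G$, showing that the $G$-translates combine into an equivariant injective summand. By Corollary \ref{indec-inj} (together with \cite[Theorem 8]{LeeLee} to split off an indecomposable piece of an arbitrary injective summand), we may assume $\sheaf{S}$ has a summand $\sheaf{J}$ isomorphic to $\constsheaf{k}_x$ or $\constsheaf{k}_{\bar e}$ for some vertex $x$ or edge $e$. Injective summands of $\sheaf{S}$ of this form are detected by stalks. For instance, $\constsheaf{k}_{\bar e}$, with $e = \edge{xy}$, is a summand precisely when there is a vector $s \in \sheaf{S}_e$ not in $\Res_{x,e}(\sheaf{S}_x) + \Res_{y,e}(\sheaf{S}_y)$... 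Correction: for an injective the relevant condition is dual. Maps $\constsheaf{k}_{\bar e} \To \sheaf{S}$ are determined by a vector $s \in \sheaf{S}_e$ together with lifts $s_x \in \sheaf{S}_x$ and $s_y \in \sheaf{S}_y$ of $s$. Maps $\sheaf{S} \To \constsheaf{k}_{\bar e}$ are functionals $\phi$ on $\sheaf{S}_e$ (composed with restriction). Thus $\constsheaf{k}_{\bar e}$ is a summand iff there exist $s, s_x, s_y$ and $\phi$ with $\phi(s) = 1$. The analogous statement for $\constsheaf{k}_x$ uses a vector $s \in \sheaf{S}_x$ with $\Res_{x,e'} s = 0$ for all $e' > x$, and an arbitrary functional on $\sheaf{S}_x$.

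The key idea is to build the equivariant summand orbit by orbit, using the fact that the supports $\{x\}$ or $\bar e$ are local. Fix the type (vertex or edge) and let $\sigma$ be $x$ or $e$. We define $\sheaf{E}$ as the $G$-equivariant subsheaf generated by the "injective part" at $\sigma$. Concretely, for the edge case let $\sheaf{E}_e \subset \sheaf{S}_e$ be a $G_e$-stable complement-free piece: the subspace $A_e = \{ s \in \sheaf{S}_e : s \text{ lifts to both } \sheaf{S}_x \text{ and } \sheaf{S}_y\}$ contains $s$, and it is $G_e$-stable. The cleanest choice is to take a $G_e$-stable subspace $E_e \subseteq \sheaf{S}_e$ together with $G_e$-equivariant (pointwise stabilizer issues handled via $G_e$ permuting $x,y$) lifts $E_e \To \sheaf{S}_x$ and $E_e \To \sheaf{S}_y$. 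We then transport by $G$ to define $\sheaf{E}$ on the orbit $Ge$ and its endpoints. We must also choose a $G_e$-stable complement for the splitting, meaning a $G_e$-equivariant retraction of $\sheaf{S}_e$ onto $E_e$ compatible with the lifts.

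The main obstacle is equivariance of these choices: the lifts and the retraction must be chosen $G_e$-equivariantly (respectively $G_x$-equivariantly), and with no characteristic or finiteness hypotheses on $G_e$ this is not automatic. To get around it, we would choose the summand canonically instead. In the edge case, take $E_e$ maximal: set $\sheaf{E}_e = \sheaf{S}_e$ modulo nothing, and use the canonical subspace $\sheaf{E}_x = \ker$-type data. Better, we replace $\sheaf{S}$ by a canonical decomposition. For the vertex case, $K_x = \bigcap_{e'>x} \Ker \Res_{x,e'}$ is canonical and $G_x$-stable. Any vertex-supported injective summand lives inside $K_x$, and $\bigoplus_{gx} \constsheaf{(K_{gx})}_{gx}$ is a $G$-equivariant subsheaf which is injective, hence a summand of $\sheaf{S}$ in $\Cat{Sh}(X)$. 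This handles the vertex case without any choices. For the edge case, if no vertex summand exists, we use the canonical subspace $A_e$ above and its $G_e$-equivariant restriction. Edge-stabilizers act on $A_e$, and we take $\sheaf{E}$ to be the $G$-translates of the subsheaf generated by $A_e$ and its lifts. We then argue that it is an injective subsheaf, and hence a summand by injectivity. Note that only the injectivity of $\sheaf{E}$, not an equivariant complement, is required by the statement, which is exactly what makes the canonical construction sufficient. The delicate point is that lifts to $\sheaf{S}_x$ need not be unique. We would first try to quotient out by the vertex-summand part $K_x$, which is already split off equivariantly, making the lifts unique and hence canonical and equivariant.
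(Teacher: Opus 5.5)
Your vertex case is correct, and somewhat cleaner than the paper's. $K_x=\sheaf{S}_x^{\el}$ is canonical, and $\sheaf{S}^{\el}=\bigoplus_x \constsheaf{(\sheaf{S}^{\el}_x)}_x$ is a $G$-stable injective subsheaf. The paper instead spans the $G_x$-translates of one embedding $\constsheaf{k}_x\Into\sheaf{S}$ and induces.

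The edge case has a genuine gap, which starts in your description of morphisms $\constsheaf{k}_{\bar e}\To\sheaf{S}$. Since $(\constsheaf{k}_{\bar e})_{e'}=0$ for every edge $e'\neq e$, compatibility with restriction forces the lift $s_x$ to satisfy $\Res_{x,e'}s_x=0$ for all $e'>x$ with $e'\neq e$, and likewise for $s_y$. Your subspace $A_e=\Res_{x,e}(\sheaf{S}_x)\cap\Res_{y,e}(\sheaf{S}_y)$ ignores this condition and is far too large. For a localization, where all restriction maps are surjective, $A_e=\sheaf{S}_e$. Lifts of elements of $A_e$ then typically restrict nontrivially to other edges. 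So the subsheaf generated by the translates of $A_e$ and its lifts is not supported on the closed edges $\overline{ge}$ and is not injective in general; in the surjective case it is all of $\sheaf{S}$. The uniqueness claim fails for the same reason. Two lifts of $s$ differ by an element of $\Ker\Res_{x,e}$, not of $K_x=\bigcap_{e'>x}\Ker\Res_{x,e'}$. So quotienting by $K_x$, which is zero anyway once you are in the edge case, does not make lifts unique.

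The repair is to allow only unifacial lifts. Put $U_x^e=\{v\in\sheaf{S}_x:\Res_{x,e'}v=0 \text{ for all } e'>x,\ e'\neq e\}$ and $A_e=\Res_{x,e}(U_x^e)\cap\Res_{y,e}(U_y^e)$. When $\sheaf{S}^{\el}=0$, $\Res_{x,e}$ is injective on $U_x^e$, since its kernel there consists of elliptic vectors. Hence lifts are unique, so they are canonical and $G_e$-equivariant, even when $G_e$ swaps $x$ and $y$.

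You must then actually prove the step you only asserted, that the translates form an injective subsheaf. At a vertex $z$, the lifted pieces coming from distinct edges $e''>z$ are linearly independent, because each restricts nontrivially only to its own edge. So the translates assemble into a $G$-stable subsheaf isomorphic to $\bigoplus_{e''\in Ge}\constsheaf{(A_{e''})}_{\bar e''}$. By local finiteness this sum is a product, so it is injective.

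With these corrections your canonical route works and yields a choice-free summand. The paper argues differently. It takes the $G_e$-span $E_e$ of the image of a single embedding $\eta\From\constsheaf{k}_{\bar e}\Into\sheaf{S}$; the absence of vertex-supported subsheaves is exactly what makes the unifacial lifts unique there. It then forms $\ind_{G_e}^G\constsheaf{E}_{\bar e}$ and shows the adjoint map into $\sheaf{S}$ is injective, because its kernel would be supported on vertices.
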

\begin{proof}
Suppose there is an embedding $\eta \From \constsheaf{k}_x \Into \sheaf{S}$.  Let $E_x$ be the subspace of $\sheaf{S}_x$ spanned by $G_x \cdot \Im(\eta)$.  This defines a $G_x$-equivariant embedding of sheaves,
$$\eta \From \constsheaf{E}_x \Into \sheaf{S}.$$
Let $\sheaf{E} = \ind_{G_x}^G \constsheaf{E}_x$.  By adjointness, the embedding $\eta$ defines a morphism of $G$-equivariant sheaves,
$$\phi \From \sheaf{E} \Into \sheaf{S}.$$
Concretely, this $\phi$ transports the ($G_x$-equivariant) injective homomorphism $E \Into \sheaf{S}_x$ to an injective homomorphism $E \Into \sheaf{S}_{gx}$ for every $g \in G$, using the $G$-equivariant structure $\rho_{g,x}$.  The sheaf $\sheaf{E}$ is injective by construction, and hence is a summand of $\sheaf{S}$.  Hence we find that $\sheaf{S}$ has a $G$-equivariant injective summand.

Now suppose there is no such embedding of $\constsheaf{k}_x \Into \sheaf{S}$.  Thus there are no subsheaves of $\sheaf{S}$ supported on a single vertex.  Then, if $\sheaf{S}$ has an injective summand, it must admit an embedding $\eta \From \constsheaf{k}_{\bar e} \Into \sheaf{S}$.  Most of the construction is the same; let $E_e$ be the subspace of $\sheaf{S}_e$ spanned by $G_e \cdot \Im(\eta_e)$.  This defines a $G_e$-equivariant embedding of sheaves,
$$\eta \From \constsheaf{E}_{\bar e} \Into \sheaf{S}.$$

Now we let $\sheaf{E} = \ind_{G_e}^G \constsheaf{E}_{\bar e}$.  Adjointness defines the desired morphism of $G$-equivariant sheaves, $\phi \From \sheaf{E} \Into \sheaf{S}$.  This is obtained by summing the translates of $\eta$ via all elements of $G / G_e$.  In other words,
$$\phi = \bigoplus_{\bar g \in G / G_e} \phi_g \From \bigoplus_{\bar g \in G / G_e} g^\ast \constsheaf{E}_{\bar e} \To \sheaf{S},$$
where $\phi_g$ is the composition of $g^\ast \eta$ with the equivariance isomorphism $g^\ast \sheaf{S} \To \sheaf{S}$.  

We see that $\phi$ is injective because there can be no cancellations; there cannot be a nonzero $k_e \in \Ker(\phi)_e$ for any edge $e$, since the summands $g^\ast \constsheaf{E}_{\bar e}$ have no edges in their common support.  Thus $\Ker(\phi)$ is supported on vertices; it follows that $\Ker(\phi)$ is an injective subsheaf (hence summand) of the injective sheaf $\sheaf{E}$.  But no vertex-supported $\constsheaf{k}_x$ is a subsheaf of an edge-supported injective $\constsheaf{k}_{\bar e}$.  Thus $\Ker(\phi) = 0$.  Hence the $G$-equivariant injective sheaf $\sheaf{E}$ is a subsheaf, hence a summand of $\sheaf{S}$.
\end{proof}

\section{Collapse}

\subsection{Elementary collapse}

The idea of an elementary collapse has old roots, in the work of Whitehead from 1938 \cite{Whitehead}.  There, the idea is that one looks for simplices $\sigma, \tau$ in a simplicial complex, for which $\sigma$ is a codimension-one face of a maximal simplex $\tau$, and no other maximal simplex contains $\sigma$.  In this case, one can ``collapse'' the complex by removing the pair $\{ \sigma, \tau \}$.  The figure below gives a sequence of elementary collapses.
\begin{center}
\begin{tikzpicture}
\begin{scope}[xshift=-4.5cm]
\coordinate (A) at (-1,0); 
\coordinate (B) at (0,0.8); 
\coordinate (C) at (0,-0.8); 
\coordinate (D) at (1,0); 
\filldraw[draw=black, thick, fill=black!10] (A) to node[above left=-0.1cm] {$\sigma$} (B) -- (C) to node[below left=-0.1cm] {$\rho$} (A);
\filldraw[draw=black, thick, fill=black!20] (D) -- (B) -- (C) -- cycle;
\filldraw[draw=black, fill=gray] (A) circle (0.05) node[left] {$\alpha$};
\filldraw[draw=black, fill=gray] (B) circle (0.05)node[above]  {$\beta$};
\filldraw[draw=black, fill=gray] (C) circle (0.05) node[below]  {$\gamma$};
\filldraw[draw=black, fill=gray] (D) circle (0.05) node[right]  {$\delta$};
\node (T) at (-0.4,0) {$\tau$};
\end{scope}
\draw[->] (-3,0) to [bend left=20] node[above] {\small Collapse $\{\sigma, \tau \}$} (-1,0);
\begin{scope}[xshift=0.5cm]
\coordinate (A) at (-1,0); 
\coordinate (B) at (0,0.8); 
\coordinate (C) at (0,-0.8);
\coordinate (D) at (1,0); 
\draw[draw=black, thick] (A) -- (C) to node[below left=-0.1cm] {$\rho$} (A);
\filldraw[draw=black, thick, fill=black!20] (D) -- (B) -- (C) -- cycle;
\filldraw[draw=black, fill=gray] (A) circle (0.05) node[left] {$\alpha$};
\filldraw[draw=black, fill=gray] (B) circle (0.05)node[above]  {$\beta$};
\filldraw[draw=black, fill=gray] (C) circle (0.05) node[below]  {$\gamma$};
\filldraw[draw=black, fill=gray] (D) circle (0.05) node[right]  {$\delta$};
\end{scope}
\draw[->] (2,0) to [bend left=20] node[above] {\small Collapse $\{\alpha, \rho \}$} (4,0);
\begin{scope}[xshift=4.5cm]
\coordinate (B) at (0,0.8); 
\coordinate (C) at (0,-0.8); 
\coordinate (D) at (1,0); 
\filldraw[draw=black, thick, fill=black!20] (D) -- (B) -- (C) -- cycle;
\filldraw[draw=black, fill=gray] (B) circle (0.05)node[above]  {$\beta$};
\filldraw[draw=black, fill=gray] (C) circle (0.05) node[below]  {$\gamma$};
\filldraw[draw=black, fill=gray] (D) circle (0.05) node[right]  {$\delta$};
\end{scope}
\end{tikzpicture}
\end{center}

We introduce an analogue here for (equivariant) sheaves on the tree $X$.
\begin{definition}
A {\em basic elementary sheaf} on $X$ is a sheaf of the form $\constsheaf{k}_{x,e}$ for some $x < e$; in other words $\constsheaf{k}_{x,e}$ is the sheaf whose stalks vanish outside of one vertex $x$ and one edge $e > x$, and for which
$$(\constsheaf{k}_{x,e})_x = (\constsheaf{k}_{x,e})_e = k \text{ and } \Res_{x,e} = \Id.$$
More generally, an {\em elementary sheaf} on $X$ is a sheaf which is isomorphic to a direct sum of basic elementary sheaves.
\end{definition}

When $\sheaf{J}$ is an elementary sheaf, we may decompose:  $\sheaf{J} = \bigoplus_{x < e} \sheaf{J}^{x,e}$.  Here $\sheaf{J}^{x,e}$ is a direct sum of elementary subsheaves supported on the pair $\{ x,e \}$, thus a constant sheaf on $\{ x,e \}$ (extended by zero elsewhere).  More canonically, we can identify
$$\sheaf{J}^{x,e} \isom \Hom(\constsheaf{k}_{x,e}, \sheaf{J}) \otimes \constsheaf{k}_{x,e}.$$

If $\sheaf{S}$ is a sheaf on $X$, an elementary subsheaf of $\sheaf{S}$ will mean an embedding $\sheaf{J} \Into \sheaf{S}$ of an elementary sheaf.  In this case, we say that the morphism from $\sheaf{S}$ onto its quotient $\sheaf{S} / \sheaf{J}$ is an {\em elementary collapse}.  More generally, a {\em collapse} of $\sheaf{S}$ is a morphism of sheaves $\sheaf{S} \To \sheaf{T}$ which is composed from a finite sequence of elementary collapses.

In the traditional setting of simplicial complexes, an elementary collapse is a particular type of homotopy equivalence.  We do not explore the homotopy theory of sheaves on $X$, but we do note that our collapses induce isomorphisms in cohomology.
\begin{proposition}
Let $\sheaf{S} \Onto \sheaf{S} / \sheaf{J}$ be an elementary collapse of $\sheaf{S}$.  Then the natural maps $H_c^p(X, \sheaf{S}) \To H_c^p(X, \sheaf{S} / \sheaf{J})$ are isomorphisms for all $p \geq 0$.
\end{proposition}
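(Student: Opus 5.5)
We prove the statement with the long exact sequence in compactly supported cohomology, after showing that an elementary sheaf has vanishing cohomology. The short exact sequence of sheaves
$$0 \To \sheaf{J} \To \sheaf{S} \To \sheaf{S}/\sheaf{J} \To 0$$
gives, stalkwise, short exact sequences $0 \To \sheaf{J}_x \To \sheaf{S}_x \To (\sheaf{S}/\sheaf{J})_x \To 0$, and likewise at every edge. Taking direct sums over vertices and edges preserves exactness. Hence we obtain a short exact sequence of two-term cochain complexes
$$0 \To C_c^\bullet(X,\sheaf{J}) \To C_c^\bullet(X,\sheaf{S}) \To C_c^\bullet(X,\sheaf{S}/\sheaf{J}) \To 0,$$
once we check that the coboundary $d$ is natural in morphisms of sheaves. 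This is immediate from the formula $(df)_e = [e:x]\Res_{x,e}(f)$. The snake lemma then gives the six-term exact sequence
$$0 \To H_c^0(\sheaf{J}) \To H_c^0(\sheaf{S}) \To H_c^0(\sheaf{S}/\sheaf{J}) \To H_c^1(\sheaf{J}) \To H_c^1(\sheaf{S}) \To H_c^1(\sheaf{S}/\sheaf{J}) \To 0.$$
For $p \geq 2$ all the groups vanish by definition, so nothing needs to be checked there. It therefore suffices to show $H_c^0(X,\sheaf{J}) = H_c^1(X,\sheaf{J}) = 0$.

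Since $C_c^\bullet$ commutes with arbitrary direct sums, and so does taking kernels and cokernels of $d$, we may reduce to a basic elementary sheaf $\constsheaf{k}_{x,e}$. We should check that this reduction is legitimate for an infinite direct sum $\sheaf{J} = \bigoplus \constsheaf{k}_{x_i,e_i}$. The stalks of $\sheaf{J}$ are direct sums, and by local finiteness the compactly supported cochains of $\sheaf{J}$ are the direct sum of the cochains of the summands. The coboundary also respects this decomposition, since each summand is a subsheaf. So the reduction holds.

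For $\constsheaf{k}_{x,e}$, the complex is $k \To k$ in degrees $0$ and $1$, placed at $x$ and $e$ respectively. The coboundary is multiplication by $[e:x] = \pm 1$, because $\Res_{x,e} = \Id$ and the only edge over $x$ with a nonzero stalk is $e$. This map is an isomorphism, so both the kernel and the cokernel vanish.

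The only delicate point is bookkeeping: confirming that the sheaf maps induce maps of complexes compatible with the orientation signs, and that arbitrary direct sums pass through the cochain construction. I do not expect any genuine obstacle. If a $G$-equivariant version is wanted, the same maps are $G$-intertwining, so the isomorphisms hold as representations.
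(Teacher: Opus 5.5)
Your proof is correct and follows the same route as the paper. The long exact sequence reduces everything to the vanishing of $H_c^0(X,\sheaf{J})$ and $H_c^1(X,\sheaf{J})$. That vanishing holds because each basic elementary summand contributes the complex $k \xrightarrow{\pm 1} k$. You spell out details the paper leaves implicit: the snake lemma, and the compatibility of compactly supported cochains with direct sums. One small correction: that compatibility holds without local finiteness, which is needed only for $d$ to be defined at all.
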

\begin{proof}
Using the long exact sequence in cohomology, it suffices to see that $H_c^p(X, \sheaf{J}) = 0$ for all $p \geq 0$.  But this is clear, as the cohomology of a basic elementary sheaf is computed from the complex $\cdots \To 0 \To k \xrightarrow{\Id} k \To 0 \To \cdots$.  
\end{proof}

\subsection{Equivariant collapse}

In order to collapse equivariant sheaves, we need an equivariant notion of an elementary sheaf.  We take the following definition.
\begin{definition}
A $G$-equivariant elementary  sheaf on $X$ is an equivariant sheaf $\sheaf{J}$, which as a sheaf (forgetting the equivariant structure) is an elementary sheaf.
\end{definition}

If $\sheaf{S}$ is a $G$-equivariant sheaf on $X$, and $\sheaf{J}$ is an equivariant elementary subsheaf of $\sheaf{S}$, we say that $\sheaf{S}$ {\em equivariantly collapses} onto its quotient $\sheaf{S} / \sheaf{J}$.  As in the non-equivariant setting, the natural map provides an isomorphism in cohomology,
$$H_c^p(X, \sheaf{S}) \xrightarrow{\sim} H_c^p(X, \sheaf{S} / \sheaf{J}).$$
These are now isomorphisms in the category $\Cat{Rep}(G)$.

\subsection{The maximal elementary subsheaf}

Let $\sheaf{S}$ be a sheaf on $X$.  Recall from Corollary \ref{indec-inj} that $\sheaf{S}$ has an injective subsheaf (hence summand) iff there exists a vertex $x$ such that $k_{x} \Into \sheaf{S}$ or there exists an edge $e$ such that $k_{\bar e} \Into \sheaf{S}$.  In our previous paper \cite{MWInd}, we introduced functorially-defined subsheaves $\sheaf{S}^{\el} \subset \sheaf{S}$ and  $\sheaf{S}^{\uni} \subset \sheaf{S}$, whose definitions we recall here.

\begin{definition}
Given a vertex $x \in X^0$, and element $f \in \sheaf{S}_x$, we say that $f$ is {\em elliptic} if $\Res_{x,e} f = 0$ for all edges $e > x$.  We write $\sheaf{S}_x^{\el}$ for the subspace of elliptic vectors in $\sheaf{S}_x$.  This defines a subsheaf $\sheaf{S}^{\el} \subset \sheaf{S}$ whose support lies within the set $X^0$ of vertices of $\sheaf{S}$.
\end{definition}
Note that the sheaf $\sheaf{S}^{\el}$ is injective, $\sheaf{S}^{\el} = \bigoplus_x \constsheaf{\sheaf{S}}_x^{\el}$, and thus a summand of $\sheaf{S}$.

\begin{definition}
Given a vertex $x \in X^0$, and element $f \in \sheaf{S}_x$, we say that $f$ is {\em unifacial} if there exists a unique edge $e > x$ for which $\Res_{x,e} f \neq 0$.  We write $\sheaf{S}_x^{\uni}$ be the subspace of $\sheaf{S}_x$ generated by unifacial vectors.  Finally, when $e = \edge{xy}$, we let 
$$\sheaf{S}_e^{\uni} = \Res_{x,e} \sheaf{S}_x^{\uni} + \Res_{y,e} \sheaf{S}_y^{\uni}.$$
This defines a subsheaf $\sheaf{S}^{\uni} \subset \sheaf{S}$.
\end{definition}

A priori, $\sheaf{S}_x^{\uni}$ might contain a nonzero elliptic vector; one might have $f_1 \neq f_2 \in \sheaf{S}_x^{\uni}$ while $\Res_{x,e} f_1 = \Res_{x,e} f_2$, leading to a cancellation $f_1 - f_2 \in \sheaf{S}_x^{\el}$.  But when $\sheaf{S}$ has no injective summands, such cancellation cannot occur.  The following theorem generalizes this idea. 

\begin{thm}
\label{uni-elementary}
Suppose that $\sheaf{S}$ is a sheaf on $X$ with no injective summands.  Then $\sheaf{S}^{\uni}$ is the unique maximal elementary subsheaf of $\sheaf{S}$.  
\end{thm}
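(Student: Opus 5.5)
We prove two things: that $\sheaf{S}^{\uni}$ is elementary, and that every elementary subsheaf $\sheaf{J} \subset \sheaf{S}$ lies inside $\sheaf{S}^{\uni}$. The second part is the easy direction. If $\sheaf{J} = \bigoplus_{x<e} \sheaf{J}^{x,e}$ and $f \in \sheaf{J}^{x,e}_x \subset \sheaf{S}_x$, then $\Res_{x,e'} f = 0$ for every $e' \neq e$, since the restriction maps of $\sheaf{S}$ extend those of $\sheaf{J}$ and $\sheaf{J}_{e'}$ receives nothing from $\sheaf{J}^{x,e}$. Moreover $\Res_{x,e} f \neq 0$ whenever $f \neq 0$, because the inclusion $\sheaf{J}_e \Into \sheaf{S}_e$ is injective and $\Res_{x,e}$ is an isomorphism on $\sheaf{J}^{x,e}$. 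So $f$ is unifacial, and $\sheaf{J}_x \subset \sheaf{S}_x^{\uni}$. Every edge stalk $\sheaf{J}_e$ is the sum of images of vertex stalks, hence lies in $\sheaf{S}_e^{\uni}$.

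For the first part, fix a vertex $x$. For each edge $e > x$, let $U_{x,e} \subset \sheaf{S}_x$ be the set of vectors $f$ with $\Res_{x,e'} f = 0$ for all $e' \neq e$; this is a subspace. Then $\sheaf{S}_x^{\uni} = \sum_{e > x} U_{x,e}$. The kernel of $\Res_{x,e}$ restricted to $U_{x,e}$ consists of elliptic vectors. Since $\sheaf{S}$ has no injective summands, $\sheaf{S}^{\el}$ (which is injective, hence a summand) vanishes. So $\Res_{x,e}|_{U_{x,e}}$ is injective. The sum $\sum_e U_{x,e}$ is direct: if $\sum_e u_e = 0$ with $u_e \in U_{x,e}$, applying $\Res_{x,e}$ kills all terms but $u_e$, so $\Res_{x,e} u_e = 0$ and hence $u_e = 0$.

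Thus $\sheaf{S}_x^{\uni} = \bigoplus_{e>x} U_{x,e}$, with each summand mapping injectively into exactly one edge stalk. At an edge $e = \edge{xy}$ we have $\sheaf{S}_e^{\uni} = \Res_{x,e} U_{x,e} + \Res_{y,e} U_{y,e}$.

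The main obstacle is showing this last sum is direct. If it is, then $\sheaf{S}^{\uni}$ splits as $\bigoplus_{x<e} (U_{x,e} \to \Res_{x,e}U_{x,e})$, each summand a constant sheaf on $\{x,e\}$, i.e., an elementary sheaf (choosing bases). Suppose instead there are $u \in U_{x,e}$ and $w \in U_{y,e}$ with $\Res_{x,e} u = \Res_{y,e} w \neq 0$. Then, taking a nonzero common value, the pair $(u, w)$ is a section of $\sheaf{S}$ supported on the closed edge $\bar e$. It defines a morphism $\constsheaf{k}_{\bar e} \to \sheaf{S}$, which is injective because the edge component is nonzero. Since $\constsheaf{k}_{\bar e}$ is injective by Corollary \ref{indec-inj}, this embedding splits, giving an injective summand of $\sheaf{S}$ and a contradiction. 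This handles one-dimensional overlaps; a general overlap contains a nonzero vector, so the intersection $\Res_{x,e}U_{x,e} \cap \Res_{y,e}U_{y,e}$ is zero.

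With that in hand, $\sheaf{S}^{\uni}$ is elementary, contains every elementary subsheaf, and is therefore the unique maximal one.
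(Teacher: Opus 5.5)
Your proof is correct and follows essentially the same route as the paper. Germs of an elementary subsheaf are unifacial, which gives containment in $\sheaf{S}^{\uni}$; and $\sheaf{S}^{\uni}$ is elementary because non-injectivity at a vertex would produce an elliptic germ (an embedded $\constsheaf{k}_x$), while overlap at an edge would produce an embedded $\constsheaf{k}_{\bar e}$.

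Your subspaces $U_{x,e}$ are the paper's $\sheaf{S}^{x,e}_x$ (the span of unifacial germs pointing along $e$) once elliptic germs are ruled out. Your equality $\sheaf{S}_x^{\uni} = \sum_{e>x} U_{x,e}$ already relies on that, so state the vanishing of $\sheaf{S}^{\el}$ first. Your restriction argument also makes explicit the paper's one-line remark that germs propagating in distinct directions cannot cancel.
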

\begin{proof}
First, suppose that $\sheaf{J} = \bigoplus_{x < e} \sheaf{J}^{x,e}$ is an elementary subsheaf of $\sheaf{S}$.  Then every nonzero germ $f \in \sheaf{J}_x^{x,e}$ is unifacial; in this way, we see that $\sheaf{J}^{x,e} \subset \sheaf{S}^{\uni}$ for all $x < e$.  It follows that $\sheaf{J} \subset \sheaf{S}^{\uni}$.  

Thus it remains to show that $\sheaf{S}^{\uni}$ itself is an elementary subsheaf of $\sheaf{S}$.  For this, consider the following subsheaf $\sheaf{S}^{x,e} \subset \sheaf{S}^{\uni}$ supported on $\{ x,e \}$, for all $x < e$:
$$\sheaf{S}_x^{x,e} = \Span_k \{ f \in \sheaf{S}_x^{\uni} : \Res_{x,e} f \neq 0 \} \text{ and } \sheaf{S}_e^{x,e} = \Res_{x,e} \sheaf{S}_x^{x,e}.$$
First we claim that $\sheaf{S}_x^{x,e}$ is elementary, i.e., restriction $\sheaf{S}_x^{x,e} \To \sheaf{S}_e^{x,e}$ is an isomorphism. This map is surjective by the construction above.  If it were not injective, then there would exist $f \in \sheaf{S}_x^{x,e}$ satisfying $\Res_{x,e} f = 0$.  But such an $f$ would define an embedding $\constsheaf{k}_x \Into \sheaf{S}$ of an injective subsheaf, contradicting our no-injective-summand hypothesis.

Now that we have shown each $\sheaf{S}^{x,e}$ is an elementary subsheaf of $\sheaf{S}$, we claim that the summation map,
$$\Sigma \From \bigoplus_{x < e} \sheaf{S}^{x,e} \To \sheaf{S}$$
is injective.  (Note that local finiteness is required here to make sense of the sum!)  If not, then there would exist a vertex or edge at which the summation map is not injective.  If at an edge $e = \edge{xy}$, then the summation map
$$\Sigma \From \sheaf{S}_e^{x,e} \oplus \sheaf{S}_e^{y,e} \To \sheaf{S}_e$$
would not be injective.  This would give germs $f_x \in \sheaf{S}_x^{x,e}$ and $f_y \in \sheaf{S}_y^{y,e}$ with $\Res_{x,e} f_x = \Res_{y,e} f_y$.  And this in turn would define an embedding $\constsheaf{k}_{\bar e} \Into \sheaf{S}$ of an injective subsheaf, contradicting our no-injective-summand hypothesis.

If at a vertex $x$, then the summation map,
$$\Sigma \From \bigoplus_{e > x} \sheaf{S}_x^{x,e} \To \sheaf{S}_x$$
would not be injective.  But germs that propagate in distinct directions cannot cancel.  

Thus we find that summation gives an injective map $\bigoplus_{x < e} \sheaf{S}^{x,e} \To \sheaf{S}$.  By construction, its image is precisely $\sheaf{S}^{\uni}$.  Since each $\sheaf{S}^{x,e}$ is an elementary subsheaf, the direct sum is elementary, completing the proof.
\end{proof}

When $\sheaf{S}$ is a sheaf on $X$ with no injective summands, it follows that the map
$$\coll \From \sheaf{S} \Onto \sheaf{S}' := \sheaf{S} / \sheaf{S}^{\uni}$$
is the largest elementary collapse of $\sheaf{S}$ (the one which has maximal kernel).  Note that if $\sheaf{S}$ is a $G$-equivariant sheaf on $X$, then $\sheaf{S}^{\uni}$ is  a $G$-equivariant subsheaf, and thus $\coll$ is a $G$-equivariant collapse.  This induces an isomorphism (of $G$-representations in the equivariant case),
$$\coll \From H_c^p(X, \sheaf{S}) \xrightarrow{\sim} H_c^p(X, \sheaf{S}') \text{ for } p = 0, 1.$$

The previous theorem does not exclude a degenerate case -- when $\sheaf{S}^{\uni}$ is zero, and thus the collapse $\coll$ is the identity.  But the following guarantees collapse is nontrivial when our sheaf has a compactly-supported section.
\begin{proposition}
\label{Suni-nonzero}
Suppose that $\sheaf{S}$ is a sheaf on $X$ with no injective summands and $H_c^0(X, \sheaf{S}) \neq 0$.  Then $\sheaf{S}^{\uni} \neq 0$.
\end{proposition}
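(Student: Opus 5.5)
Take a nonzero compactly supported global section $f \in H_c^0(X, \sheaf{S})$ and look at its support $\Supp(f) = \{ x : f_x \neq 0 \} \cup \{ e : f_e \neq 0 \}$, a finite closed subset of the tree. The plan is to locate a vertex $x$ at the ``boundary'' of this support where $f_x$ is forced to be unifacial; that gives $0 \neq f_x \in \sheaf{S}_x^{\uni}$.

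First, consider the case where $\Supp(f)$ contains no edges. Then every vertex $x$ with $f_x \neq 0$ has $\Res_{x,e} f_x = f_e = 0$ for all $e > x$. So $f_x$ is a nonzero elliptic vector, and $k f_x$ spans a subsheaf $\constsheaf{k}_x \Into \sheaf{S}$. This is injective by Corollary \ref{indec-inj}, hence a summand, contradicting the hypothesis. So some edge lies in the support. Let $\Gamma$ be the subgraph of $X$ formed by the edges $e$ with $f_e \neq 0$ together with their endpoints; by closedness of the support, these endpoints have $f_x \neq 0$. Since $\Gamma$ is a finite nonempty forest, it has a leaf: a vertex $x$ lying on exactly one edge $e$ of $\Gamma$. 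Because $\Gamma$ includes every edge carrying nonzero values of $f$, we have $\Res_{x,e'} f_x = f_{e'} = 0$ for every other edge $e' > x$, while $\Res_{x,e} f_x = f_e \neq 0$. Thus $f_x$ is unifacial, $f_x \in \sheaf{S}_x^{\uni}$, and $f_x \neq 0$, so $\sheaf{S}^{\uni} \neq 0$.

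The only delicate point is identifying a leaf correctly. A finite forest with at least one edge has at least two vertices of degree one, which can be seen by taking the endpoints of a longest simple path. Degree here means degree within $\Gamma$, and that is exactly the count of edges $e' > x$ with $f_{e'} \neq 0$. Local finiteness plays no role beyond what is already built into the definitions. No further obstacle is expected; the argument is essentially the tree fact that finite subtrees have leaves, combined with the exclusion of elliptic sections by the no-injective-summand hypothesis.
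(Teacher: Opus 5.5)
Your proof is correct and follows the paper's argument: the no-injective-summand hypothesis rules out edge-free (elliptic) support, and a leaf $x$ of the finite forest spanned by $\Supp(f)$ gives a nonzero unifacial germ $f_x \in \sheaf{S}_x^{\uni}$. The only difference is economy: you need just one edge in the support and take a leaf of the edge subgraph $\Gamma$, whereas the paper asserts that $\Supp(f)$ has no isolated points at all.
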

\begin{proof}
This is essentially proven in \cite{MWInd}, but we quickly review the argument.  Begin with a nonzero $f \in H_c^0(X, \sheaf{S})$.  Then $\Supp(f)$ is a nonempty finite closed subset of $X$, thus a finite union of finite trees.  Since $\sheaf{S}$ has no injective summands, $\Supp(f)$ has no isolated points.  Thus $\Supp(f)$ contains a leaf $x$, with a unique edge $e > x$ satisfying $e \in \Supp(f)$.  Hence $0 \neq f_x \in \sheaf{S}_x^{x,e} \subset \sheaf{S}_x^{\uni}$.  
\end{proof}

\subsection{Support radius and collapse}
Suppose that $\sheaf{S}$ is a sheaf on $X$.  We will be interested in sections of $\sheaf{S}$ supported on balls in $X$.  Here, if $x \in X^0$ is a vertex, we write $B(x; r)$ for the closed ball of radius $r$ centered at $x$.  Thus $B(x; 0) = \{ x \}$, and $B(x; 1)$ includes $x$, all edges $e > x$, and all endpoints of those edges.  We also allow balls centered at edges:  if $e = \edge{xy}$, then $B(e; r)$ is simply $B(x;r) \cup B(y;r) \cup \{ e \}$.  Thus $B(e;0) = \bar e = \{ x,y,e \}$.   

We introduce a metric for sections of $\sheaf{S}$ here.
\begin{definition}
If $f \in H_c^0(X, \sheaf{S})$, the {\em support radius} $\rho(f)$ of $f$ is the smallest integer $r$ such that $\Supp(f)$ is contained in a ball of radius $r$.  The {\em support radius} of the sheaf $\sheaf{S}$ is defined by
$$\rho(\sheaf{S}) = \min \{ \rho(f) : 0 \neq f \in H_c^0(X, \sheaf{S}) \}.$$
If $H_c^0(X, \sheaf{S}) = 0$, we write $\rho(\sheaf{S}) = \infty$.
\end{definition}

\begin{lemma}
\label{rho-inj}
The sheaf $\sheaf{S}$ satisfies $\rho(\sheaf{S}) = 0$ if and only if $\sheaf{S}$ has an injective summand.
\end{lemma}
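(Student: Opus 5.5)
Both directions come from unpacking the definition of support radius and comparing it with the classification of indecomposable injectives in Corollary \ref{indec-inj}. A support of radius $0$ must lie in a ball of radius $0$, i.e. in $B(x;0)=\{x\}$ or $B(e;0)=\bar e$. So the statement becomes: $\sheaf{S}$ has a nonzero compactly supported section supported in some $\{x\}$ or some $\bar e$ if and only if $\sheaf{S}$ has an injective summand.

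For the direction ($\Leftarrow$), suppose $\sheaf{S}$ has an injective summand. By Corollary \ref{indec-inj} and the remark opening this subsection, there is an embedding $\constsheaf{k}_x \Into \sheaf{S}$ or $\constsheaf{k}_{\bar e}\Into\sheaf{S}$. The image of $1$ under this embedding is a global section. It is compactly supported, with support equal to $\{x\}$ or to $\bar e$ (nonzero on each point, by injectivity of the embedding). In either case it is nonzero with $\rho(f)=0$, so $\rho(\sheaf{S})=0$. A small point to check is that an injective summand contains an indecomposable injective subsheaf. I would take this from the decomposition argument in the proof of Corollary \ref{indec-inj}: $\sheaf{I}\Into\iota_\circ\iota^\circ\sheaf{I}$ exhibits summands of the form $\constsheaf{k}_x$ or $\constsheaf{k}_{\bar e}$. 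Alternatively, I would invoke the statement recalled just before the definition of $\sheaf{S}^{\el}$.

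For the direction ($\Rightarrow$), take $0\neq f\in H_c^0(X,\sheaf{S})$ with $\Supp(f)\subset\{x\}$ or $\Supp(f)\subset\bar e$, where $e=\edge{xy}$. I would split into cases.
\begin{enumerate}
\item If $\Supp(f)=\{x\}$, then $f_x\neq0$ and $\Res_{x,e'}f_x=f_{e'}=0$ for all $e'>x$. So $f_x$ is elliptic, and $c\mapsto cf_x$ defines an embedding $\constsheaf{k}_x\Into\sheaf{S}$.
\item If $\Supp(f)\subset\bar e$ and $f_e=0$, then by closedness of support the support is contained in $\{x,y\}$. Any nonzero $f_x$ or $f_y$ is then elliptic, and we are back in the first case.
\item If $f_e\neq0$, then $f_x$ and $f_y$ are nonzero with $\Res_{x,e}f_x=\Res_{y,e}f_y=f_e$. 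Their restrictions to every other edge vanish. Hence $c\mapsto c\cdot(f_x,f_e,f_y)$ is a morphism $\constsheaf{k}_{\bar e}\To\sheaf{S}$, and it is injective on each stalk.
\end{enumerate}
In every case we obtain an injective subsheaf, which is a summand because it is injective.

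The only mildly delicate step is verifying that these maps really are sheaf morphisms. This is immediate from the vanishing of the other restrictions together with the agreement at $e$. No real obstacle is expected.
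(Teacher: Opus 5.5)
Your proof is correct and follows essentially the same route as the paper: a nonzero section supported in a ball of radius $0$ yields an embedding of $\constsheaf{k}_x$ or $\constsheaf{k}_{\bar e}$, and conversely an indecomposable injective subsheaf yields a section with $\rho(f)=0$. Your case analysis is slightly more careful than the paper's. The paper asserts $\Supp(f)=\{x\}$ or $\Supp(f)=\bar e$, which skips the possible support $\{x,y\}\subset\bar e$ with $f_e=0$; your second case handles it.
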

\begin{proof}
Note that if $f \in H_c^0(X, \sheaf{S})$, and $\rho(f) = 0$, then $\Supp(f) = \{ x \}$ or $\Supp(f) = \bar e$, for some vertex $x$ or some edge $e$.  Hence $f$ defines an injective subsheaf (hence summand) of $\sheaf{S}$.  Conversely, an indecomposable injective subsheaf of $\sheaf{S}$ defines a section $f$ with $\rho(f) = 0$.
\end{proof}

If $B$ is any compact subset of $X$, let $H_B^0(X, \sheaf{S})$ be the space of sections whose support lies within $B$.  If $G_B$ is the stabilizer of $B$ (set-wise, not point-wise), and $\sheaf{S}$ is a $G$-equivariant sheaf, then $H_B^0(X, \sheaf{S})$ is naturally a representation of $G_B$.  In particular, if $B = B(x;r)$ or $B = B(e;r)$, then $H_B^0(X, \sheaf{S})$ is a representation of $G_x$ or $G_e$, respectively.  The following theorem allows us to track the support of global sections, as we collapse a sheaf.
\begin{thm}
\label{track-support}
Suppose that $\sheaf{S}$ is a sheaf with $\rho(\sheaf{S})$ finite and nonzero.  Let $\sheaf{S}' = \sheaf{S} / \sheaf{S}^{\uni}$.  Let $B = B(x;r)$ or $B = B(e;r)$ be a ball of positive radius $r$ in $X$.  Let $B' = B(x;r-1)$ or $B' = B(e;r-1)$ be the ball of radius $r-1$ with the same central vertex or edge as $B$.  Then the collapse isomorphism $\coll \From H_c^0(X, \sheaf{S}) \To H_c^0(X, \sheaf{S}')$ restricts to an isomorphism,
$$\coll_B \From H_B^0(X, \sheaf{S}) \To H_{B'}^0(X, \sheaf{S}').$$
Moreover, for every $f \in H_c^0(X, \sheaf{S})$ with $f' = \coll(f)$, we have
$$\rho(f') = \rho(f) - 1.$$
\end{thm}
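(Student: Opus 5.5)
The plan is to prove two support inclusions: \emph{shrinking}, meaning $\coll$ maps $H_B^0(X,\sheaf{S})$ into $H_{B'}^0(X,\sheaf{S}')$, and \emph{lifting}, meaning every section of $\sheaf{S}'$ supported in $B'$ comes from a section of $\sheaf{S}$ supported in $B$. Since $\coll \From H_c^0(X,\sheaf{S}) \To H_c^0(X,\sheaf{S}')$ is already an isomorphism, these two inclusions give that $\coll_B$ is an isomorphism. The radius statement follows directly. Shrinking gives $\rho(f') \le \rho(f) - 1$. Lifting, together with injectivity of $\coll$, gives $\rho(f) \le \rho(f') + 1$. This second bound needs $\rho(f') \ge 0$, i.e.\ $f' \ne 0$, which holds because $\coll$ is injective.

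Throughout I use the standing hypothesis $0 < \rho(\sheaf{S}) < \infty$. By Lemma \ref{rho-inj}, $\sheaf{S}$ has no injective summands. So Theorem \ref{uni-elementary} applies: $\sheaf{S}^{\uni} = \bigoplus_{x<e} \sheaf{S}^{x,e}$ is elementary, and in particular $\sheaf{S}^{\el} = 0$.

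\emph{Shrinking.} Let $f \in H_B^0(X,\sheaf{S})$ and let $z$ be a vertex of $B$ on the outer layer, at distance $r$ from the center. Every edge at $z$ except the one pointing toward the center lies outside $B$, so $f$ vanishes there. Hence $f_z$ is either elliptic or unifacial. Since $\sheaf{S}^{\el}=0$, in either case $f_z \in \sheaf{S}^{\uni}_z$, so $f'_z = 0$. For an outermost edge $e$ of $B$ with outer endpoint $z$, we have $f_e = \Res_{z,e} f_z \in \sheaf{S}^{\uni}_e$, so $f'_e = 0$. Every other point of $B$ lies in $B'$, so $\Supp(f') \subset B'$.

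\emph{Lifting.} This is the main step. Let $f' \in H_{B'}^0(X,\sheaf{S}')$.
\begin{enumerate}
\item For each vertex $x \in B'$, choose a lift $g_x \in \sheaf{S}_x$ of $f'_x$; put $g_x = 0$ at vertices outside $B'$.
\item At each edge $e = \edge{xy}$, the discrepancy $\delta_e = \Res_{x,e} g_x - \Res_{y,e} g_y$ lies in $\sheaf{S}^{\uni}_e$, because $f'$ is a section of $\sheaf{S}'$.
\item Decompose $\delta_e = \Res_{x,e} u_x^e - \Res_{y,e} u_y^e$ with $u_x^e \in \sheaf{S}^{x,e}_x$ and $u_y^e \in \sheaf{S}^{y,e}_y$. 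This is possible because $\sheaf{S}^{\uni}_e = \sheaf{S}^{x,e}_e \oplus \sheaf{S}^{y,e}_e$ and restriction is an isomorphism on each piece.
\item Replace $g_x$ by $g_x - \sum_{e>x} u_x^e$. Since $u_x^e$ restricts to zero on every edge other than $e$, each correction fixes exactly one edge without disturbing the others. The result is a global section $f$ with $\coll(f) = f'$.
\item Control the support. If both endpoints of $e$ lie outside $B'$, then $\delta_e = 0$ and we take $u = 0$. So corrections are nonzero only at vertices of $B'$ and at vertices adjacent to $B'$, all of which lie in $B$. Edges whose endpoints are both outside $B'$ also get $f_e = 0$. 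Hence $\Supp(f) \subset B$.
\end{enumerate}
The main obstacle is this bookkeeping: making sure the corrections, which are chosen edge by edge, never spill beyond distance one from $B'$. For edge-centered balls, the same argument should go through with $B(e;r)$ written as the union $B(x;r)\cup B(y;r)\cup\{e\}$.
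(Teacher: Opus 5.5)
Your proof is correct. The shrinking half and the deduction of $\rho(f')=\rho(f)-1$ are the same as the paper's; the route differs in the lifting step.

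The paper never builds a lift. It takes the unique $h\in H_c^0(X,\sheaf{S})$ with $\coll(h)=h'$, supplied by the already-established cohomological isomorphism, and then bounds $\Supp(h)$ by contradiction. At a vertex $z\in\Supp(h)$ outside $B'$ one has $h_z\in\sheaf{S}^{\uni}_z$. Suppose $h_z$ had a nonzero component $h_{z,e}\in\sheaf{S}^{z,e}_z$ along an edge $e$ whose other endpoint $y$ is also outside $B'$. Then $h_{z,e}$, together with the $e$-component of $h_y\in\sheaf{S}^{\uni}_y$, would give a nontrivial kernel element of $\sheaf{S}^{z,e}_e\oplus\sheaf{S}^{y,e}_e\to\sheaf{S}_e$, i.e.\ an embedded $\constsheaf{k}_{\bar e}$.

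You instead construct the lift directly: arbitrary lifts on $B'$, zero elsewhere, then edge-by-edge corrections. These corrections use exactly the same splitting $\sheaf{S}^{\uni}_e=\sheaf{S}^{x,e}_e\oplus\sheaf{S}^{y,e}_e$, and the support bound falls out of $\delta_e=0$ on edges with both endpoints outside $B'$. So the structural input from Theorem \ref{uni-elementary} is the same in both arguments. The paper's version is shorter because surjectivity comes for free. Yours is constructive and in effect reproves surjectivity of $\coll$ on compactly supported sections, with explicit support control. It uses the cohomological statement only for injectivity of $\coll$, which amounts to $H_c^0(X,\sheaf{S}^{\uni})=0$.

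The edge-centred case you defer does go through unchanged. The only geometric facts you use hold equally for $B(e;r)$: an outer-layer vertex of $B$ lies on exactly one edge of $B$, every vertex adjacent to $B'$ lies in $B$, and an edge with both endpoints in $B$ belongs to $B$.
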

\begin{proof}
Let $r$ be a positive integer, and consider a closed ball $B = B(x;r)$ or $B = B(e;r)$ in $X$.  Begin with an element $f \in H_B^0(X, \sheaf{S})$.  Then for every leaf $x$ of $B$, we find that $f_x \in \sheaf{S}_x^{\uni}$.  Indeed, since $\Supp(f) \subset B$, the germ $f_x$ can only satisfy $\Res_{x,e} f_x \neq 0$ when $e$ is an edge in $B$.  Therefore, the image $f' \in H_c^0(X, \sheaf{S}')$ vanishes at the leaf $x$.  Hence $\Supp(f') \subset B(x;r-1)$ or $\Supp(f') \subset B(e;r-1)$.  This proves that the collapse isomorphism $\coll$ restricts to a map,
$$\coll_B \From H_B^0(X, \sheaf{S}) \To H_{B'}^0(X, \sheaf{S}').$$
This map $\coll_B$ is injective (as the restriction of an isomorphism).  Moreover, if $B$ is a minimal ball containing the support of $f$, so $r = \rho(f)$, then $\Supp(f') \subset B'$ and 
$$\rho(f') \leq \rho(f) - 1.$$

It remains to show that $\coll_B$ is surjective, and to prove the reverse inequality.  For this, let us begin with a section $h' \in H_{B'}^0(X, \sheaf{S}')$.  Then there exists a unique section $h \in H_c^0(X, \sheaf{S})$ with $\coll(h) = h'$.  Suppose that $z \in \Supp(h)$.  If $z \not \in B'$, then $h_z \neq 0$ and $h_z' = 0$, so it follows that $h_z \in \sheaf{S}_z^{\uni}$.  Since $\sheaf{S}$ has no injective summands, we may decompose $h_z$ uniquely as
$$h_z = \sum_{e > z} h_{z,e} \text{ with } h_{z,e} \in \sheaf{S}_z^{z,e}.$$
If $e = \edge{zy}$, then we have $\Res_{z,e} h_{z,e} = \Res_{y,e} h_y$.  If $h_{z,e} \neq 0$, then $h_y \neq 0$.

If $y \not \in B'$, then $h_y' = 0$, so $h_y \in \sheaf{S}_y^{\uni}$.  It follows that the pair $h_{z,e}, h_y$ defines an embedding $k_{\bar e} \Into \sheaf{S}$, contradicting the fact that $\sheaf{S}$ has no injective summands.  Therefore, we find that $h_{z,e}$ is nonzero only if $y \in B'$.  This is only possible if $z \in B$; geometrically, we are using the fact that the vertices of $B$ are the vertices of $B'$ together with vertices that are a connected via a one-edge path to vertices of $B'$.  

From this it follows that $\Supp(h) \subset B$.  This completes the proof that $\coll_B$ is surjective, thus an isomorphism.  We find that $\rho(h) \leq \rho(h') + 1$ by this argument.  Hence (taking $h' = f'$ from earlier), we find that $\rho(f) \leq \rho(f') + 1$, completing the proof.
\end{proof}

\section{Collapse and geometrically minimal K-types}

In this section, we fix a group $G$ acting on a tree $X$ as before, and a plumbing system $\NN$.  We take $(\pi, V)$ to be an irreducible, $\NN$-compact, $\NN$-cogenerated, and $\NN$-characteristic representation of $G$.  

For example, $G$ may be $p$-adic group of relative rank one, or a ``covering'' of such a group, or a disconnected group, as discussed in the earlier Examples \ref{Ex1}, \ref{Ex2}, \ref{Ex3}.  And in these cases, by Theorem \ref{cusp-cpt}, we may take any irreducible supercuspidal representation, with coefficients in a field $k$ with $p \nmid \Char(k)$.  The depth of the representation should be bounded by $\NN$, in the sense that $V^{N_x} \neq 0$ for some vertex $x \in X^0$.  Finally, we let $\sheaf{S}$ be the localization, $\sheaf{S} = \loc_\NN(\pi, V)$.  Thus $\sheaf{S}$ is a $G$-equivariant sheaf on $X$, and localization gives an identification,
$$\loc \From V \xrightarrow{\sim} H_c^0(X, \sheaf{S}).$$

In this section, we demonstrate that $\sheaf{S}$ can be equivariantly collapsed onto an injective sheaf.  Tracking supports through the collapse, we find a ``geometrically minimal $K$-type'' $(\rho, W)$ within $V$.  This $K$-type occurs with multiplicity one, and $(\pi, V)$ is naturally identified with $\cInd_K^G W$ .  

\subsection{The collapse sequence}

If $\sheaf{S}$ has no injective summands, then $\sheaf{S}^{\uni}$ is the maximal elementary subsheaf of $\sheaf{S}$, and we define $\sheaf{S}' = \sheaf{S} / \sheaf{S}^{\uni}$.  Continuing, if $\sheaf{S}'$ has no injective summands, we write $\sheaf{S}'' = \sheaf{S}' / (\sheaf{S}')^{\uni}$, etc., producing a sequence of $G$-equivariant sheaves and collapses,
$$\sheaf{S} \Onto \sheaf{S}' \Onto \sheaf{S}'' \Onto \cdots \Onto \sheaf{S}^{(r)}.$$
Here we continue until we reach a sheaf $\sheaf{S}^{(r)}$ which has an injective summand.  This will be called the collapse sequence of $\sheaf{S}$; a priori, it may not terminate.  But since $V = H_c^0(X, \sheaf{S})$ is an irreducible representation of $G$, we have the following.
\begin{thm}
Let $r = \rho(\sheaf{S})$ denotes the support radius of $\sheaf{S}$.  Then the collapse sequence of $\sheaf{S}$ terminates at $\sheaf{S}^{(r)}$.  Moreover, this final term $\sheaf{S}^{(r)}$ is an indecomposable $G$-equivariant injective sheaf.  To clarify, $\sheaf{S}^{(r)}$ is an injective object of $\Cat{Sh}(X)$, and $\sheaf{S}^{(r)}$ is indecomposable as an object of $\Cat{Sh}_G(X)$.
\end{thm}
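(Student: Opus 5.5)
Since $V \isom H_c^0(X, \sheaf{S})$ is nonzero, $\rho(\sheaf{S})$ is a finite nonnegative integer $r$. The plan is to induct along the collapse sequence using Theorem \ref{track-support} and Lemma \ref{rho-inj}. If $r = 0$, Lemma \ref{rho-inj} says $\sheaf{S}$ already has an injective summand, so the sequence stops at $\sheaf{S}^{(0)} = \sheaf{S}$. If $r > 0$, Lemma \ref{rho-inj} says $\sheaf{S}$ has no injective summands, so $\sheaf{S}' = \sheaf{S}/\sheaf{S}^{\uni}$ is defined. By Theorem \ref{track-support}, $\coll$ is a bijection on compactly supported sections with $\rho(\coll f) = \rho(f) - 1$, hence $\rho(\sheaf{S}') = r - 1$. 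Moreover $H_c^0(X, \sheaf{S}') \isom V$ as $G$-representations, because the collapse is equivariant. Repeating this gives $\rho(\sheaf{S}^{(j)}) = r - j$ for $0 \le j \le r$. Each $\sheaf{S}^{(j)}$ with $j < r$ has positive radius, so it has no injective summand and the next collapse is defined. The final term $\sheaf{S}^{(r)}$ has radius $0$, so it has an injective summand and the sequence terminates exactly at step $r$.

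Next I show $\sheaf{S}^{(r)}$ is injective, and in fact equal to one equivariant injective summand. Proposition \ref{inj-summand} gives a decomposition $\sheaf{S}^{(r)} = \sheaf{E} \oplus \sheaf{T}$ in $\Cat{Sh}(X)$, where $\sheaf{E} \neq 0$ is a $G$-equivariant subsheaf that is injective in $\Cat{Sh}(X)$. I first want an equivariant complement. The right choice is probably to take $\sheaf{E}$ as constructed in that proof, namely an induced sheaf $\ind_{G_x}^G \constsheaf{E}_x$ or $\ind_{G_e}^G \constsheaf{E}_{\bar e}$ built from a radius-zero section. It is an injective object of $\Cat{Sh}_G(X)$, because $\ind$ is left adjoint to the exact $\res$ and the constituents are injective for the stabilizer. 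So the inclusion $\sheaf{E} \Into \sheaf{S}^{(r)}$ splits $G$-equivariantly, and we may take $\sheaf{T}$ equivariant. Then
$$V \isom H_c^0(X, \sheaf{S}^{(r)}) = H_c^0(X, \sheaf{E}) \oplus H_c^0(X, \sheaf{T})$$
as $G$-representations. The first summand is nonzero: it contains the sections $\constsheaf{k}_x$ or $\constsheaf{k}_{\bar e}$ of radius $0$. Irreducibility of $V$ then forces $H_c^0(X, \sheaf{T}) = 0$.

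The main obstacle is passing from $H_c^0(X, \sheaf{T}) = 0$ to $\sheaf{T} = 0$, since sheaves with no compact sections exist in general. I would exploit the local structure that survives collapsing. The original $\sheaf{S}$ has surjective restriction maps, and quotients preserve surjectivity, so every $\sheaf{S}^{(j)}$ and hence $\sheaf{T}$ has surjective restrictions. Suppose $\sheaf{T} \neq 0$. Then some stalk $\sheaf{T}_x \neq 0$, since a nonzero edge stalk is hit by a vertex stalk. Take a nonzero $t \in \sheaf{T}_x$, lift it to $\sheaf{S}^{(r)}_x$, and lift further to $\sheaf{S}_x = V^{N_x}$, obtaining $f \in V$. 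This $f$ is a compact section of $\sheaf{S}$ with $f_x \mapsto t$. Its image under the collapses is a compact section of $\sheaf{S}^{(r)}$ whose $\sheaf{T}$-component is a compact section of $\sheaf{T}$ with value $t \neq 0$ at $x$. This works provided the lifting commutes with the collapse identifications: the composite $\sheaf{S} \to \sheaf{S}^{(r)}$ is a sheaf morphism, so on stalks $\coll(f)_x$ is the image of $f_x$. That contradicts $H_c^0(X, \sheaf{T}) = 0$. Hence $\sheaf{S}^{(r)} = \sheaf{E}$ is injective in $\Cat{Sh}(X)$.

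Finally, indecomposability in $\Cat{Sh}_G(X)$ follows the same way. Any equivariant decomposition $\sheaf{S}^{(r)} = \sheaf{A} \oplus \sheaf{B}$ with both pieces nonzero gives $V \isom H_c^0(X,\sheaf{A}) \oplus H_c^0(X,\sheaf{B})$. The surjectivity-and-lifting argument of the previous paragraph shows that each nonzero summand has nonzero compact sections. Both summands are then nonzero, which contradicts irreducibility of $V$.
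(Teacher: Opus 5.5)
Your proposal follows the paper's proof. The support radius drops by one per collapse, via Theorem \ref{track-support} and Lemma \ref{rho-inj}. Then Proposition \ref{inj-summand} together with irreducibility of $V$ eliminates the complement $\sheaf{T}$. Your stalk-lifting paragraph carefully unpacks the paper's remark that $\sheaf{S}$, hence its quotient $\sheaf{S}^{(r)}$ and any summand of it, is generated by compactly supported global sections. The indecomposability step is also the paper's.

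One step, however, is wrong and should simply be deleted. You claim that $\sheaf{E} = \ind_{G_x}^G \constsheaf{E}_x$ (or its edge analogue) is injective in $\Cat{Sh}_G(X)$, so that the inclusion splits equivariantly. But a left adjoint of an exact functor preserves projectives, not injectives. Even granting that induction agrees with coinduction here (the supports are finite and $X$ is locally finite), you would still need $\constsheaf{E}_x$ to be injective in $\Cat{Sh}_{G_x}(X)$. Since $\Hom(\sheaf{A}, \constsheaf{E}_x) = \Hom_{G_x}(\sheaf{A}_x, E)$, this holds only when $E$ is an injective $k[G_x]$-module. That fails in general, e.g.\ for trivial $E$ whenever $G_x$ admits a nonzero homomorphism to $(k,+)$. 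This is why the paper asserts injectivity only in $\Cat{Sh}(X)$.

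You never need an equivariant complement. The decomposition in $\Cat{Sh}(X)$ already gives $H_c^0(X,\sheaf{S}^{(r)}) = H_c^0(X,\sheaf{E}) \oplus H_c^0(X,\sheaf{T})$ as vector spaces. Here $H_c^0(X,\sheaf{E})$ is a nonzero $G$-stable subspace of the irreducible representation $H_c^0(X,\sheaf{S}^{(r)}) \isom V$, so it is all of it, and hence $H_c^0(X,\sheaf{T}) = 0$. Your lifting argument uses only the non-equivariant projection $\sheaf{S}^{(r)} \to \sheaf{T}$, so it then yields $\sheaf{T} = 0$ exactly as you wrote.
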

\begin{proof}
Let $B = B(x;r)$ or $B = B(e;r)$ be a minimal ball which supports a section of $\sheaf{S}$.  If $r > 0$, then $\sheaf{S}$ has no injective summands, and collapse provides an isomorphism (by Theorem \ref{track-support})
$$\coll_B \From H_B(X, \sheaf{S}) \To H_{B'}(X, \sheaf{S}').$$
In particular, we find that $\rho(\sheaf{S}') \leq r - 1$.  Conversely, any section $h' \in H_c^0(X, \sheaf{S}')$ supported on a ball of radius $s$ arises from collapsing a section $h \in H_c^0(X, \sheaf{S})$ supported on a ball of radius $s+1$.  This demonstrates that $\rho(\sheaf{S}') = \rho(\sheaf{S}) - 1$.  

Continuing in this way, we find that the support radius decreases by one at each step in the collapse sequence
$$\sheaf{S} \Onto \sheaf{S}' \Onto \sheaf{S}'' \Onto \cdots \Onto \sheaf{S}^{(r)},$$
and the first occurrence of an injective summand occurs after $r = \rho(\sheaf{S})$ steps.  

We are left to consider the final sheaf $\sheaf{S}^{(r)}$ in the chain.  As $\rho(\sheaf{S}^{(r)}) = 0$, the sheaf $\sheaf{S}^{(r)}$ has an injective summand.  Moreover, as collapse does not change cohomology, we still find an isomorphism of $G$-representations,
$$V \xrightarrow{\sim} H_c^0(X, \sheaf{S}^{(r)}).$$
It remains to prove that $\sheaf{S}^{(r)}$ is an indecomposable $G$-equivariant injective sheaf.  

By Proposition \ref{inj-summand}, since $\sheaf{S}^{(r)}$ has an injective summand as a sheaf, it has a $G$-equivariant injective summand.  Thus we can find a decomposition in $\Cat{Sh}(X)$,
$$\sheaf{S}^{(r)} = \sheaf{E} \oplus \sheaf{T},$$
where $\sheaf{E}$ is an injective sheaf, and a $G$-equivariant subsheaf of $\sheaf{S}^{(r)}$.  Taking global sections, we find
$$V \isom H_c^0(X, \sheaf{S}^{(r)}) \isom H_c^0(X, \sheaf{E}) \oplus H_c^0(X, \sheaf{T}),$$
and $H_c^0(X, \sheaf{E})$ is a $G$-subrepresentation of $V$.  Since $\sheaf{S}$ is generated by compactly-supported global sections, the same is true for its quotient $\sheaf{S}^{(r)}$, and thus for its summands $\sheaf{E}$ and $\sheaf{T}$.  Since $(\pi, V)$ is an irreducible representation of $G$, and $\sheaf{E}$ is nonzero, it follows that $\sheaf{T} = 0$.

Finally, we claim that $\sheaf{E}$ is indecomposable, as an object of $\Cat{Sh}_G(X)$.  For if it decomposed as a direct sum of nonzero $G$-equivariant sheaves, $\sheaf{E} = \sheaf{E}_1 \oplus \sheaf{E}_2$, these summands would be generated by their compactly-supported global sections.  This would yield a nontrivial decomposition of $V = H_c^0(X, \sheaf{S}^{(r)})$, a contradiction as before.  Therefore $\sheaf{S}^{(r)} = \sheaf{E}$ is an indecomposable object of $\Cat{Sh}_G(X)$ and an injective object of $\Cat{Sh}(X)$. 
\end{proof}

\subsection{$K$-types and induction}

We keep the notation from before, and now track sections of the sheaf $\sheaf{S}$ as they pass through the collapse sequence.  Let $r = \rho(\sheaf{S})$ as before.  A {\em minimal support-ball} for $\sheaf{S}$ will be a minimal ball $B$ which supports a section.  Thus $B = B(x;r)$ or $B = B(e;r)$ for some vertex $x$ or edge $e$.  Note that if $x < e$, then $B(x;r) \subset B(e;r)$.  Thus if $B(x;r)$ is a minimal support-ball for $\sheaf{S}$, our definition states that $B(e;r)$ is {\em not} a minimal support-ball, even though their radii are the same.  

\begin{definition}
Let $B$ be a minimal support-ball for the sheaf $\sheaf{S}$.  Let $K = G_B$ be its stabilizer, and $H_B^0(X, \sheaf{S})$ the resulting nonzero representation of $K$.  The pair $(K, H_B^0(X, \sheaf{S}))$ is called a {\em geometrically minimal $K$-type} for $(\pi, V)$.
\end{definition}

The following theorem demonstrates that geometrically minimal $K$-types provide inducing data.
\begin{thm}
Let $B$ be a minimal support-ball for the sheaf $\sheaf{S}$, centered at a vertex $x$ or edge $e$.  Let $K = G_B$ and let $W = H_B^0(X, \sheaf{S})$ be the geometrically minimal $K$-type.  Then $W$ is an irreducible representation of $K$, and the inclusion of $K$-representations $W \Into V$ extends to an isomorphism of $G$-representations:  $\cInd_K^G W \xrightarrow{\sim} V$.
\end{thm}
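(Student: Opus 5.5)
We transport the problem to the final term of the collapse sequence. Let $r = \rho(\sheaf{S})$ and let $\sheaf{S} \Onto \sheaf{S}' \Onto \cdots \Onto \sheaf{S}^{(r)} = \sheaf{E}$ be the collapse sequence. By the previous theorem, $\sheaf{E}$ is an injective object of $\Cat{Sh}(X)$ and an indecomposable object of $\Cat{Sh}_G(X)$.

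Applying Theorem \ref{track-support} $r$ times, with the ball shrinking by one at each step, gives a $K$-equivariant isomorphism
$$H_B^0(X, \sheaf{S}) \xrightarrow{\sim} H_{B_0}^0(X, \sheaf{E}),$$
where $B_0 = \{x\}$ or $B_0 = \bar e$ is the center of $B$. The composite collapse $V \isom H_c^0(X,\sheaf{S}) \to H_c^0(X,\sheaf{E})$ is a $G$-isomorphism and restricts to this map on $W$. Each step commutes with $G_B$, because $\sheaf{S}^{\uni}$ is functorial and $G_B$ stabilizes each $B^{(j)}$, which shares its center with $B$. It therefore suffices to prove the statement for $\sheaf{E}$ and the radius-zero ball $B_0$, with $W_0 = H_{B_0}^0(X,\sheaf{E})$.

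Next we describe $\sheaf{E}$ concretely. Write $\sheaf{E}$ as a direct sum of indecomposable injectives $\constsheaf{k}_y$ and $\constsheaf{k}_{\bar e'}$, using Corollary \ref{indec-inj} and the Botnan--Crawley-Boevey decomposition; the stalks are finite-dimensional by admissibility, so local finiteness suffices. The subsheaf $\sheaf{E}^{\el}$ is $G$-stable, and when it is nonzero it is an injective summand. Following the proof of Proposition \ref{inj-summand}, we then get a $G$-equivariant injective summand, and indecomposability forces $\sheaf{E} = \sheaf{E}^{\el}$. In the other case $\sheaf{E}^{\el} = 0$, and every summand is an edge-type $\constsheaf{k}_{\bar e'}$. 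Accordingly $\sheaf{E}$ is either a sum of skyscrapers at vertices or a sum of constant sheaves on closed edges.

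Minimality of $B$ decides which case occurs. In the vertex case, if $B = B(e;0)$ then some vertex $x<e$ carries a skyscraper section, and $B(x;0)$ would be a smaller support-ball. Hence $B_0 = \{x\}$ and $\sheaf{E}_x \neq 0$. In the edge case $B_0 = \bar e$.

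We then identify $\sheaf{E}$ with an induced sheaf. Set $E_0 = \sheaf{E}_x$ in the vertex case, and $E_0 = \sheaf{E}_e$ in the edge case. This $E_0$ is a representation of $K = G_{B_0}$, and it equals $W_0$: sections supported on $\{x\}$ are exactly elements of $\sheaf{E}_x$, and sections supported on $\bar e$ are exactly $\sheaf{E}_e$, since the constant summands restrict isomorphically. The adjunction map $\ind_K^G \constsheaf{E_0}_{B_0} \to \sheaf{E}$ is an isomorphism on every stalk, because the supports of the summands $g^\ast\constsheaf{E_0}_{B_0}$, for $\bar g \in G/K$, are the pairwise distinct translates $gB_0$ and $\sheaf{E}$ is the sum of these pieces. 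In the edge case one must check that distinct translates $g\bar e$ share at most a vertex, so stalks at vertices split as direct sums over the incident edges; this matches the given decomposition of $\sheaf{E}$ into edge-constants.

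Taking $H_c^0$ turns this into
$$H_c^0(X,\sheaf{E}) = \bigoplus_{\bar g\in G/K} \pi(g)W_0,$$
which is precisely the statement that the Frobenius map $\cInd_K^G W_0 \to H_c^0(X,\sheaf{E}) \isom V$ is an isomorphism. Irreducibility of $W$ then follows formally. If $0 \neq W_1 \subsetneq W_0$ were $K$-stable, then $\cInd_K^G W_1$ would be a proper nonzero $G$-subrepresentation of $V$. It is a direct summand of $\cInd_K^G W_0$ because compact induction is exact on direct sums over cosets, and it is proper because its $W_0$-component is $W_1$. This contradicts irreducibility of $V$.

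The step I expect to be the main obstacle is the equivariant structure of $\sheaf{E}$. Concretely, I must show that the decomposition into indecomposable injectives can be grouped by $G$-orbits of $B_0$, so that $\sheaf{E}$ really is induced from the single stalk $E_0$. If the non-equivariant decomposition is not $G$-stable, I would instead argue directly with canonical subspaces. In the vertex case, $\sheaf{E} = \sheaf{E}^{\el}$ is canonically $\bigoplus_y \constsheaf{\sheaf{E}}_y$. In the edge case, I would show that the injective summand $\constsheaf{k}_{\bar e'}$ embeds canonically through $\sheaf{E}_{e'}$, so that $\sheaf{E}_y = \bigoplus_{e'>y} \sheaf{E}_{e'}$ via the restriction maps. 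This needs $\sheaf{E}^{\el}=0$ and injectivity: a vertex stalk maps injectively to the product of its edge stalks, and it surjects because the constant summands split the restrictions.
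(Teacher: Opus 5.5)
Your proof is correct in its main lines, but it reaches surjectivity by a different route from the paper.

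\textbf{The two routes.} The paper never analyzes $\sheaf{S}^{(r)}$. It transports $W$ to $H^0_{B^{(r)}}(X,\sheaf{S}^{(r)})$ exactly as you do, and reads this as a $K$-equivariant embedding $\constsheaf{W}_x\Into\sheaf{S}^{(r)}$ or $\constsheaf{W}_{\bar e}\Into\sheaf{S}^{(r)}$. It then induces to an embedding $\ind_K^G\constsheaf{W}\Into\sheaf{S}^{(r)}$ and takes $H^0_c$ to get $\cInd_K^G W\Into V$. Irreducibility of $V$ then supplies surjectivity.

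You instead show that $\sheaf{S}^{(r)}$ is either purely elliptic, or has no elliptic vectors and vertex stalks that split over the incident edges. Hence the adjunction map is itself an isomorphism of sheaves. This gives more: the terminal sheaf is literally $\ind_K^G$ of a constant sheaf on $B_0$. It also makes explicit where minimality of $B$ enters, which the paper uses without comment. Injectivity of $\constsheaf{W}_{\bar e}\to\sheaf{S}^{(r)}$ requires $f\mapsto f_e$ to be injective on $W_0$, and that fails exactly when some nonzero $f\in W_0$ is supported on the endpoints of $e$.

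Write down the canonical version from your last paragraph rather than the decomposition into indecomposables. Finite-dimensional stalks are not among the standing hypotheses of that section, so ``by admissibility'' is not available there. Surjectivity of $\sheaf{E}_y\to\bigoplus_{e'>y}\sheaf{E}_{e'}$ follows directly from injectivity of $\sheaf{E}$: extend a map from the subsheaf of $\constsheaf{k}_{y,e'}$ supported at $e'$.

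\textbf{Three repairs.}
\begin{enumerate}
\item Proposition \ref{inj-summand} only gives a $G$-stable subsheaf with a complement in $\Cat{Sh}(X)$. Indecomposability in $\Cat{Sh}_G(X)$ needs a $G$-equivariant complement, and a priori there is none: the sequence $0\to\sheaf{E}^{\el}_y\to\sheaf{E}_y\to\bigoplus_{e'>y}\sheaf{E}_{e'}\to 0$ of $G_y$-representations need not split. Argue as the paper does when proving indecomposability. $H^0_c(X,\sheaf{E}^{\el})$ is a nonzero $G$-subrepresentation of $H^0_c(X,\sheaf{E})\isom V$, hence equal to it. Since $\sheaf{E}$ is generated by compactly supported sections, this gives $\sheaf{E}=\sheaf{E}^{\el}$.
\item In the vertex case $G$ may have two vertex orbits, so ``$\sheaf{E}$ is the sum of these pieces'' needs an argument. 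Splitting $\sheaf{E}^{\el}$ by vertex orbits is $G$-equivariant, so indecomposability does apply here and confines the support to $Gx$.
\item $\cInd_K^G W_1$ is generally not a direct summand of $\cInd_K^G W_0$. It is a subrepresentation by exactness of $\cInd_K^G$, and it is proper because $\cInd_K^G(W_0/W_1)\neq 0$. That is all the irreducibility argument needs.
\end{enumerate}
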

\begin{proof}
Let $r = \rho(\sheaf{S})$, so $B$ is a ball of radius $r$.  Consider the chain of balls with the same center as $B$ but decreasing radii,
$$B \supset B' \supset B'' \supset \cdots \supset B^{(r)}.$$
Thus $B^{(r)} = \{ x \}$ or $B^{(r)} = \bar e$ is the ball of radius zero.  By Theorem \ref{track-support}, collapse provides isomorphisms,
$$W = H_B^0(X, \sheaf{S}) \xrightarrow{\sim} H_{B'}^0(X, \sheaf{S}') \xrightarrow{\sim} \cdots \xrightarrow{\sim} H_{B^{(r)}}^0(X, \sheaf{S}^{(r)}).$$
At the end, $\sheaf{S}^{(r)}$ is an indecomposable $G$-equivariant injective sheaf on $X$.  The identification of $W$ with $H_{B^{(r)}}^0(X, \sheaf{S}^{(r)})$ gives an embedding of $K$-equivariant sheaves, $\constsheaf{W}_x \Into \sheaf{S}^{(r)}$ or $\constsheaf{W}_{\bar e} \Into \sheaf{S}^{(r)}$.  By adjointness (cf.~Proposition \ref{inj-summand}), this gives an embedding of $G$-equivariant sheaves,
$$\ind_K^G \constsheaf{W}_{x} \Into \sheaf{S}^{(r)} \text{ or } \ind_K^G \constsheaf{W}_{\bar e} \Into \sheaf{S}^{(r)}.$$
Taking compactly-supported global sections, we find an inclusion,
$$\cInd_K^G W \Into H_c^0(X, \sheaf{S}^{(r)}).$$
But collapse provides isomorphisms,
$$V = H_c^0(X, \sheaf{S}) \xrightarrow{\sim} H_c^0(X, \sheaf{S}') \xrightarrow{\sim} \cdots \xrightarrow{\sim} H_c^0(X, \sheaf{S}^{(r)}).$$
Since $V$ is an irreducible representation of $G$, we find that the inclusion above gives an isomorphism,
$$\cInd_K^G W \xrightarrow{\sim} V.$$
Since $(\pi, V)$ is an irreducible representation of $G$, it follows that $W$ is an irreducible representation of $K$ that occurs with multiplicity one in $V$.  
\end{proof}

\section*{Appendix}

The following prompt was input into Claude Fable 5, on June 9, 2026.  See \cite{Claude} for more details, including the full chat transcript.  For context, this chat was an attempt to assess the technical capabilities of Fable 5, by asking a variety of math and programming questions -- including the math question here which had been a minor thorn in our side.
\begin{quote}
Alright good!  Now let's test your math solving capability.  Let $k$ be a field, and let $G$ be a finite group.  Let $(\pi, V)$ be a finite-dimensional representation of $G$, so $V$ is a finite-dimensional $k$-vector space, and $\pi$ is a homomorphism from $G$ to $GL(V)$.  Let's say that $V$ is ``$G$-nice'' if the canonical homomorphism from $V^G$ ($G$-invariants) to $V_G$ ($G$-coinvariants) is an isomorphism.  So every representation will be $G$-nice when the characteristic of $k$ does not divide the cardinality of $G$.  Describe the $G$-nice representations.  As a category?  An easy-to-check criterion that goes beyond the whole ``avoid bad field characteristic'' criterion?  A link to something else well known to representation theorists?  Let's see what you come up with.
\end{quote}

The LLM response was essentially the content and full proof of our Proposition \ref{charchar}, with some illustrative examples.  For example, Claude Fable told us that the trivial representatiion lies buried in the middle of the projective cover of the sign representation of $S_3$, in characteristic $p=3$.  Thus this projective cover qualifies as $S_3$-characteristic, since it has no trivial representation in its head or socle.  While we are certain such examples are well-known to those who work in modular representation theory, that is not our field of expertise.  We hope the insights of Claude Fable 5 are helpful for the reader, even if they are a very minor part of this paper.

% -------------------------------------------0---------------------
\bibliographystyle{amsalpha}
\bibliography{SCS}
% ----------------------------------------------------------------

\end{document}